\documentclass[11pt,reqno]{amsart}

\usepackage[a4paper,margin=1.08in]{geometry}
\usepackage{amsmath,amssymb,amsthm,mathtools,mathrsfs}
\usepackage{microtype}
\microtypesetup{expansion=false}
\usepackage{enumitem}
\usepackage{array,booktabs}
\setlist[enumerate]{label=\textup{(\roman*)},leftmargin=*,itemsep=.4em}
\usepackage{xcolor}
\usepackage[colorlinks=true,linkcolor=blue!55!black,citecolor=blue!55!black,urlcolor=blue!55!black]{hyperref}
\hypersetup{
 pdftitle={Extreme values of quadratic Hecke L-functions over global fields},
 pdfauthor={Zikang Dong, Long Liu},
 pdfsubject={Quadratic Hecke L-functions, low-energy resonance, and critical windows}
}

\numberwithin{equation}{section}

\newtheorem{theorem}{Theorem}[section]
\newtheorem{proposition}[theorem]{Proposition}
\newtheorem{lemma}[theorem]{Lemma}
\newtheorem{corollary}[theorem]{Corollary}

\theoremstyle{definition}

\theoremstyle{remark}
\newtheorem{remark}[theorem]{Remark}

\DeclareMathOperator{\Res}{Res}
\newcommand{\Norm}{\mathrm N}
\newcommand{\OO}{\mathcal O}
\newcommand{\Om}{\Omega}
\newcommand{\one}{\mathbf 1}
\newcommand{\eps}{\varepsilon}
\newcommand{\p}{\mathfrak p}
\newcommand{\q}{\mathfrak q}
\newcommand{\aideal}{\mathfrak a}
\newcommand{\mideal}{\mathfrak m}
\newcommand{\nideal}{\mathfrak n}
\newcommand{\fideal}{\mathfrak f}
\newcommand{\kideal}{\mathfrak k}
\newcommand{\lideal}{\mathfrak l}
\newcommand{\uideal}{\mathfrak u}
\newcommand{\videal}{\mathfrak v}
\newcommand{\cideal}{\mathfrak c}

\title[Extreme values of quadratic Hecke $L$-functions]
{Extreme values of quadratic Hecke
$L$-functions over global fields}

\author{Zikang Dong}\author{Long Liu}
\date{\today}
\address[Zikang Dong]{School of Mathematical Sciences, Soochow University, Suzhou, 215006, P. R. China}
\email{zikangdong@gmail.com}
\address[Long Liu]{Institute for Theoretical Sciences, Westlake University, Hangzhou,  310024, P. R. China}
\email{liulong@westlake.edu.cn}
\subjclass[2020]{11M20, 11R42, 11R58, 11L40, 11M06, 11N37}
\keywords{Hecke $L$-functions, quadratic characters, extreme values,
resonance method, G\'al sums, number fields, global function fields}

\begin{document}
\raggedbottom

\begin{abstract}
We study large values of quadratic Hecke $L$-functions in the conductor
aspect. Let $K$ be a fixed number field, and assume GRH for its finite-order
Hecke $L$-functions. In a fixed ray class component with conductor norm
comparable to $X$, we prove that
\[
 \max_\chi L\left(\frac12+\frac A{\log_2X},\chi\right)
 \geq\exp\left\{(e^{-A}+o(1))
       \sqrt{\frac{\log X\log_3X}{\log_2X}}\right\}
\]
for every fixed $A\geq0$. Every fixed smaller constant is attained by at
least $X^{1-o(1)}$ characters. The same count holds at a suitably slowly
moving threshold approaching the displayed constant. The combinatorial
input is a sparse squarefree G\'al set of cardinality $N$, retaining the
known leading constant $2$ and having square multiplicative energy
$N^{2+o(1)}$. The energy bound reflects the low degree of the associated
Boolean polynomial. Together with the resonance estimate, it yields the
abundance bound.
We also prove
unconditional analogues for quadratic characters with prime conductor
away from one fixed place over any global function field of odd
characteristic. Finally, we give bounds in the fixed strip and at $s=1$,
including the dependence on the residue of the Dedekind zeta function
and the prescribed local factors.
\end{abstract}

\maketitle
\tableofcontents

\section{Introduction}

We study large values of quadratic Hecke $L$-functions near the
central point, with particular attention to how often such values occur.
A lower bound for the maximum obtained by resonance need not, by itself,
give a useful count of the characters attaining large values: the
resonating weight may be concentrated on too few members of the family.
Here we use sparse squarefree resonators whose fourth moments can also
be controlled. This allows us to prove that every fixed lower threshold
in our maximum estimate is reached by $X^{1-o(1)}$ characters in a family
of size comparable to $X$. The argument applies at the centre and at
real points approaching it from the right.

Let $K$ be a fixed number field of degree $d_K$, with ring of integers
$\OO_K$, and let $\Norm\aideal$ denote the absolute norm of a nonzero
integral ideal. Unless stated otherwise, ideal sums are over nonzero
integral ideals of $\OO_K$. We write $\log_j$ for the $j$-fold iterated
natural logarithm and set
\[
 \mathcal V(X)=\sqrt{\frac{\log X\log_3X}{\log_2X}}.
\]
The family is chosen in Proposition~\ref{prop:positive-local-component}.
It consists of squarefree ideals $\aideal$ in the identity class of a
fixed narrow ray modulus with finite part $\q$. Denote this set by
$\Om$ and the finite support of $\q$ by $S$. For each $\aideal\in\Om$,
choose a totally positive generator $\alpha_{\aideal}\equiv1\pmod\q$.
With a suitable fixed totally positive $d\in K^\times$, the extension
$K(\sqrt{d\alpha_{\aideal}})/K$ defines a quadratic Hecke character
$\chi_{\aideal}$ independent of this choice. Its conductor satisfies
\[
 \fideal_{\aideal}=\cideal_{\Om}\aideal,\qquad
 \chi_{\aideal}(\p)=0\quad(\p\in S),
\]
where $\cideal_{\Om}$ is fixed. As usual, the ideal coefficients are
extended by zero at ramified primes. Thus the conductor norm is
comparable to $\Norm\aideal$. We write
\[
 \Om(X)=\{\aideal\in\Om:X<\Norm\aideal\leq2X\}.
\]
Then $\#\Om(X)\sim\kappa_{\Om}X$ for a constant $\kappa_{\Om}>0$.

Throughout the number field arguments, \textup{GRH}$_K$ denotes GRH
for all finite-order Hecke $L$-functions over $K$. The larger class of
characters is needed in the reciprocity argument: extending a quadratic
character from a subgroup of a ray class group need not preserve its
order. For $A\geq0$, put
\begin{equation}\label{eq:sigma-A}
 \sigma_A(X)=\frac12+\frac A{\log_2X}.
\end{equation}

\begin{theorem}\label{thm:central-max}Assume \textup{GRH}$_K$. Then the following assertions hold.
\begin{enumerate}
\item For every fixed $A\geq0$,
\[
 \max_{\aideal\in\Om(X)}L(\sigma_A(X),\chi_{\aideal})
 \geq\exp\{(e^{-A}+o(1))\mathcal V(X)\}.
\]
\item For every fixed $A\geq0$ and $0<c<1$,
\[
 \#\{\aideal\in\Om(X):L(\sigma_A(X),\chi_{\aideal})
                 \geq e^{ce^{-A}\mathcal V(X)}\}\geq X^{1-o(1)}.
\]
\item For every fixed $A_0\geq0$, there is a function
$\eta_{A_0}(X)\to0$ such that, uniformly for $0\leq A\leq A_0$,
\[
 \#\{\aideal\in\Om(X):L(\sigma_A(X),\chi_{\aideal})
       \geq e^{(1-\eta_{A_0}(X))e^{-A}\mathcal V(X)}\}
       \geq X^{1-o(1)}.
\]
\end{enumerate}
The first two assertions are also uniform for $A$ in a fixed compact interval.
\end{theorem}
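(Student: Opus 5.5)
We would prove all three parts by a long resonance method in the style of Aistleitner--Mahatab--Munsch--Peyrot, adapted to the quadratic family $\Om(X)$. The resonator is $R(\aideal)=\sum_{\mideal\in\mathcal M}r(\mideal)\chi_{\aideal}(\mideal)$, and we take $r=\one_{\mathcal M}$, where $\mathcal M$ is the sparse squarefree G\'al set from the combinatorial input. It has cardinality $N=X^{\theta}$ with $\theta<1$ fixed, and is built from prime ideals $\p$ with $\Norm\p$ in a range $[L^2,\exp((\log L)^2)]$, with $L\approx\sqrt{\log X\log_2X}$. We compare
\[
 S_1=\sum_{\aideal\in\Om(X)}|R(\aideal)|^2,\qquad
 S_2=\sum_{\aideal\in\Om(X)}L(\sigma_A,\chi_{\aideal})|R(\aideal)|^2 .
\]
We evaluate $S_1$ by quadratic reciprocity for the family (Proposition~\ref{prop:positive-local-component}). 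By orthogonality in the ray class component, only pairs with $\mideal\nideal$ a square contribute to the main term; since $\mathcal M$ is squarefree, these are the diagonal pairs. The off-diagonal terms involve Hecke $L$-functions of finite order over $K$, and under GRH$_K$ they contribute $O(X^{1/2+\eps}N^{1+\eps})$. Hence $S_1\sim\kappa_\Om XN$ once $\theta<1/2$, or more generally for any $\theta$ at which the error stays admissible.

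For $S_2$ we write $L(\sigma_A,\chi_{\aideal})$ through a smoothed approximate functional equation (or a short Dirichlet polynomial under GRH) of length $X^{1/2+\eps}$. The main term then becomes
\[
 \kappa_\Om X\sum_{\mideal,\nideal\in\mathcal M}\ \sum_{\mideal\nideal\lideal=\square}\Norm\lideal^{-\sigma_A}.
\]
Writing $\lideal=\mideal\nideal\uideal^2/\gcd^2$ and using positivity, this is at least $XN$ times
\[
 \prod_{\p}\Bigl(1+\frac{1}{\Norm\p^{\sigma_A}}\one_{\p\in\text{support}}\cdots\Bigr).
\]
The G\'al-sum computation for $\mathcal M$ gives the ratio $S_2/S_1\ge\exp\{(e^{-A}+o(1))\mathcal V(X)\}$. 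Here the factor $e^{-A}$ comes from $\Norm\p^{-A/\log_2X}$ over primes with $\log\Norm\p\approx\log_2X$. Since $L(\sigma_A,\chi)>0$ under GRH for $\sigma_A\geq1/2$ (a real character with no zeros off the line; at the centre we use nonnegativity), assertion (i) follows from $\max L\ge S_2/S_1$. Uniformity in $A$ on compacta comes from the explicit dependence $e^{-A}$.

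For (ii) and (iii) we need a counting argument. Let $\mathcal E$ be the set of $\aideal$ with $L(\sigma_A,\chi_{\aideal})\ge e^{ce^{-A}\mathcal V}$. The characters outside $\mathcal E$ contribute at most $e^{ce^{-A}\mathcal V}S_1$, which is negligible against $S_2$ when $c<1$. So
\[
 S_2\ll\sum_{\aideal\in\mathcal E}L|R|^2\le\Bigl(\sum_{\aideal\in\Om(X)} L^4\Bigr)^{1/4}\Bigl(\sum |R|^4\Bigr)^{1/2}(\#\mathcal E)^{1/4}
\]
by H\"older. The moment bound $\sum L(\sigma_A,\chi)^4\le X^{1+o(1)}$ holds under GRH by Soundararajan--Harper-type upper bounds, or trivially from $L\le\exp(O(\log X/\log_2X))$ together with the fact that we only need $X^{o(1)}$ losses. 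The fourth moment $\sum|R|^4$ is again handled by reciprocity: its main term counts quadruples with $\mideal_1\mideal_2\mideal_3\mideal_4=\square$, which is the square multiplicative energy $E(\mathcal M)=N^{2+o(1)}$, giving $X N^{2+o(1)}$. Since $S_2\ge XN e^{(e^{-A}+o(1))\mathcal V}$, we get $\#\mathcal E\ge X^{1-o(1)}$, because $e^{\mathcal V}=X^{o(1)}$. For (iii) we let $c=1-\eta$ with $\eta\to0$ slowly enough to absorb the $o(1)$ in (i) uniformly over $A\le A_0$; this is a diagonal argument.

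The main obstacle is the off-diagonal control in the fourth moment of $R$. We need reciprocity error terms uniform for $\mathcal M$ of size $X^\theta$, with moduli up to $N^4$, so $\theta$ must be small (say $\theta<1/8$). We then need to check that the G\'al-sum lower bound with leading constant $2$ survives at this density. We would first try $N=X^{1/10}$ and verify that the G\'al asymptotics depend only on $\log N\asymp\log X$.
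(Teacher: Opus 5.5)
There is a genuine gap, and it is exactly where the constant in the theorem comes from.

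\textbf{The length of the resonator fixes the constant.} For a set of size $N$ the G\'al bound is $N\exp\{(2e^{-A}+o(1))\mathcal V(N)\}$. For $N=X^\theta$ one has $\mathcal V(N)=(\sqrt\theta+o(1))\mathcal V(X)$, so your ratio is $\exp\{(2\sqrt\theta\,e^{-A}+o(1))\mathcal V(X)\}$, not $\exp\{(e^{-A}+o(1))\mathcal V(X)\}$. The G\'al asymptotics do \emph{not} depend only on $\log N\asymp\log X$. With your $N=X^{1/10}$, or any $\theta<1/8$, the following happens:
(i) holds only with coefficient below $2\sqrt{1/8}\approx0.71$;
in (ii) the complement is negligible only when $c<2\sqrt\theta$;
and (iii) is out of reach.

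Coefficient $1$ forces $\theta\uparrow1/4$, and $1/4$ is the limit imposed by the \emph{first} moment, not the fourth. The approximate functional equation has length $\asymp X^{1/2}$. Each of the $N^2$ pairs carries a GRH error $X^{1/2+\eps+o(1)}$, weighted by $\sum_{\Norm\kideal\ll X^{1/2}}(\Norm\kideal)^{-1/2}\asymp X^{1/4}$. The total error $X^{3/4+\eps+o(1)}N^2$ is therefore $o(XN)$ only for $\theta<1/4$. The fourth moment, with error $X^{1/2+\eps+o(1)}N^4$ against $XN^2$, needs nothing stronger.

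The paper accordingly takes $N=\lfloor X^\beta\rfloor$ with $\beta<1/4$ and obtains $2\sqrt\beta\,e^{-A}$. It lets $\beta\uparrow1/4$ for (i), chooses $c^2/4<\beta<1/4$ for (ii), and diagonalizes over $(c_j,\beta_j)\to(1,1/4)$ for (iii).

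\textbf{Two further steps fail as written.}

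First, the range $[L^2,\exp((\log L)^2)]$ is the Bondarenko--Seip range, and it contradicts your explanation of $e^{-A}$. There the G\'al mass is spread over $\log\Norm\p$ from about $\log_2X$ up to $(\log_2X)^2/4$, so the shift $(\Norm\p)^{-A/\log_2X}$ averages to strictly less than $e^{-A}$. The factor $e^{-A}$ needs $\log\Norm\p=(1+o(1))\log_2X$ on the whole support. The paper's blocks of norm $\asymp u^k\log N\log_2N$ with $k\leq(\log_2N)^\gamma$ achieve this. The resulting bound $y_{\mathcal M}\leq(\log N)^{1+o(1)}$ is also what makes the truncation to $\Norm[\mideal,\nideal]/\Norm(\mideal,\nideal)\leq X^\eps$ harmless (Lemma~\ref{lem:gal-truncation}).

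Second, the twisting ideals are not of size $N^4$. Resonator elements have $\asymp\log N/\log_3N$ prime factors of norm about $\log N\log_2N$, so their norms are $X^{O(\log_2X/\log_3X)}$. A GRH error of the shape $X^{1/2+\eps}(\Norm\lideal)^{\eps}$ is therefore useless. You need the following:
a Littlewood-type bound $\exp\{C(\log\Norm\lideal_0)^{1-\eps}\}$ in the conductor $\lideal_0$, the squarefree part of the twisting ideal;
control of the Euler factors removed at the square part;
and the observation that small support gives $\log\Norm\lideal_0\ll\log X+y_{\mathcal M}$.
This is what Proposition~\ref{prop:orthogonality} and Lemma~\ref{lem:FG-uniform} supply.

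Your claimed off-diagonal bound $O(X^{1/2+\eps}N^{1+\eps})$ for the second moment is also unsupported; termwise GRH gives $N^2X^{1/2+\eps+o(1)}$. Your H\"older step with the pointwise bound $L\leq X^{o(1)}$ is fine and matches the paper's Cauchy--Schwarz argument.
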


At $A=0$ this gives the corresponding statements at the centre.
The count $X^{1-o(1)}$ means that, for each fixed $\varepsilon>0$,
there are at least $X^{1-\varepsilon}$ such characters for all sufficiently
large $X$. This does not imply a positive proportion. The threshold in (iii)
is obtained by diagonal selection, with no prescribed rate for
$\eta_{A_0}(X)\to0$.

Over $\mathbb Q$ the result can be stated in a single congruence class
of positive fundamental discriminants. Write
$\chi_D(n)=(D/n)$ for the Kronecker character associated with a positive
fundamental discriminant $D$, and put
\[
 \mathcal D_8(X)=\{D:X<D\leq2X,\ D=8a,\ a\text{ squarefree},\quad
                         a\equiv1\pmod8\}.
\]
These are precisely the positive fundamental discriminants in the
congruence class $8\pmod{64}$ and the indicated interval.

\begin{corollary}\label{cor:rational-abundance}
Assume GRH for Dirichlet $L$-functions. For every fixed $A\geq0$
and $0<c<1$,
\[
 \#\{D\in\mathcal D_8(X):L(\sigma_A(X),\chi_D)
                  \geq e^{ce^{-A}\mathcal V(X)}\}\geq X^{1-o(1)}.
\]
Moreover,
\[
 \max_{D\in\mathcal D_8(X)}L(\sigma_A(X),\chi_D)
       \geq\exp\{(e^{-A}+o(1))\mathcal V(X)\}.
\]
Both assertions are uniform for $A$ in a fixed compact subset of
$[0,\infty)$, and remain valid with $\mathcal D_8(X)$ replaced by all
positive fundamental discriminants in $(X,2X]$.
\end{corollary}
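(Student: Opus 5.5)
The plan is to specialize Theorem~\ref{thm:central-max} to $K=\mathbb Q$. GRH$_{\mathbb Q}$ for finite-order Hecke $L$-functions over $\mathbb Q$ is exactly GRH for Dirichlet $L$-functions, since every finite-order Hecke character of $\mathbb Q$ comes from a primitive Dirichlet character, up to the sign at infinity. So the work is to identify the family $\Om$ from Proposition~\ref{prop:positive-local-component} with $\{a>0 \text{ squarefree}, a\equiv1\pmod 8\}$.

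\textbf{Identifying the family.} Take the narrow modulus with finite part $\q=(8)$ together with the real place. The identity narrow ray class consists of the ideals $(a)$ with $a>0$ and $a\equiv 1\pmod 8$; this choice of generator is the totally positive $\alpha_{\aideal}=a$. Take $d=2$. Then $K(\sqrt{2a})/\mathbb Q$ has discriminant $8a$, because $2a\equiv 2\pmod{16}$. The associated character is therefore the Kronecker character $\chi_{8a}$, with conductor $8a$, so $\cideal_\Om=(8)$. It vanishes at $2\in S$, consistent with $\chi_{\aideal}(\p)=0$ for $\p\in S$. Conversely, every positive fundamental discriminant $D\equiv8\pmod{64}$ has the form $D=8a$ with $a$ odd squarefree and $a\equiv1\pmod8$. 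Hence $D\mapsto a=D/8$ is a bijection from $\mathcal D_8(X)$ onto $\Om(X/8)$, and $L(s,\chi_D)=L(s,\chi_{\aideal})$.

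\textbf{Applying the theorem.} If Proposition~\ref{prop:positive-local-component} only asserts the existence of some admissible $(\q,d)$, I would check directly that this explicit choice satisfies its hypotheses: the identity class is nonempty, and the local factor at $2$ is prescribed. I would then apply Theorem~\ref{thm:central-max} with $X/8$ in place of $X$. The shift of argument is harmless, since $\mathcal V(X/8)=(1+o(1))\mathcal V(X)$. The evaluation point also barely moves: $\log_2(X/8)=\log_2X+O(1/\log X)$, so $\sigma_A(X/8)-\sigma_A(X)=O(A/(\log X(\log_2X)^2))$. I would absorb this in one of two ways. The first is to note that the theorem's proof is uniform in $\sigma$ within $o(1/\log_2X)$ of $\sigma_A$. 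The second, which is more robust, is to observe that $\sigma_A(X)=\sigma_{A'}(X/8)$ with $A'=A+o(1)$, and to use the stated uniformity of (i),(ii) for $A'$ in a compact interval together with continuity of $e^{-A}$ in $A$. For (ii), I would pick $c<c'<1$ and apply the theorem with $c'$ and $A'$; then $c'e^{-A'}\ge ce^{-A}$ for large $X$. This gives both displayed assertions, and uniformity over compact $A$-sets is inherited directly.

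\textbf{Enlarging the family.} Replacing $\mathcal D_8(X)$ by all positive fundamental discriminants in $(X,2X]$ only enlarges the set. The lower bounds for the count and for the maximum are therefore monotone in the family, so they transfer immediately.

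\textbf{Main obstacle.} The step I expect to be delicate is the exact matching of the quadratic character with the Hecke character $\chi_{\aideal}$. This includes confirming that the choice of $d$ and of the congruence class gives precisely conductor $8a$ and $\chi_{\aideal}(2)=0$. If the construction in Proposition~\ref{prop:positive-local-component} forces a different auxiliary modulus, I would instead apply the theorem to that family. I would then compare it with $\mathcal D_8$ through the same bijection on squarefree $a$, choosing the modulus $8$ by hand.
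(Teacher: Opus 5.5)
Your proposal is correct and matches the paper's proof: it takes $S=\{2\}$, $d=2$, $\q=(8)$, sets $Y=X/8$ and $B=A\log_2Y/\log_2X$ so that $\sigma_B(Y)=\sigma_A(X)$ exactly, applies Theorem~\ref{thm:central-max}(ii) with some $c<c'<1$ using compact-parameter uniformity, uses part (i) for the maximum, and passes to all positive fundamental discriminants by inclusion. The only step you leave implicit is that the count $(X/8)^{1-o(1)}$ is $X^{1-o(1)}$, which is immediate.
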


\begin{proof}
For $K=\mathbb Q$, take $S=\{2\}$, $d=2$, and $\q=(8)$ in the
family construction. The component consists of squarefree positive
integers $a\equiv1\pmod8$. The extension $\mathbb Q(\sqrt{2a})$
has discriminant and conductor $8a$, and its character is $\chi_{8a}$.
To express the result in the conductor variable, set
\[
 Y=X/8,\qquad B=A\frac{\log_2Y}{\log_2X}.
\]
Then $\sigma_B(Y)=\sigma_A(X)$ exactly, and, uniformly for bounded $A$,
\[
 e^{-B}\mathcal V(Y)=(1+o(1))e^{-A}\mathcal V(X).
\]
Apply Theorem~\ref{thm:central-max}(ii) with any fixed $c<c'<1$ and
use its compact-parameter uniformity. For sufficiently large $X$, the
threshold with $c'$ exceeds the one stated here, and
$Y^{1-o(1)}=X^{1-o(1)}$. The maximum follows from assertion (i) of
the same theorem. Inclusion gives the claims for all positive
fundamental discriminants.
\end{proof}

The analogous result over function fields is unconditional.
Let $F=\mathbb F_q(C)$, where $q$ is odd and $C$ is a smooth, projective,
geometrically connected curve, and fix an odd-degree place $\infty$.
We use the component $\Om_F$ of principal primes in
$\Gamma(C\setminus\{\infty\},\mathcal O_C)$ constructed in
Section~\ref{sec:function-field}. A normalized generator of
$\mathfrak P\in\Om_F$ defines a quadratic character
$\chi_{\mathfrak P}$ with conductor divisor $\mathfrak P+\infty$.
The finite conductor away from $\infty$ is therefore prime.
For the admissible degrees, write
\[
 \Om_F(n)=\{\mathfrak P\in\Om_F:\deg\mathfrak P=n\},\qquad X_n=q^n,
\]
and set
\begin{equation}\label{eq:VF}
 \mathcal V_F(n)=\sqrt{\frac{n\log q\,\log_2n}{\log n}}
                  =(1+o(1))\mathcal V(X_n),
\end{equation}
\begin{equation}\label{eq:ff-sigma-A}
 \sigma_{A,n}=\frac12+\frac A{\log_2X_n}.
\end{equation}

\begin{theorem}\label{thm:function-field}
Fix $F=\mathbb F_q(C)$ with $q$ odd, and let $n$ tend to infinity through
the admissible degrees of $\Om_F$.
\begin{enumerate}
\item For every fixed $A\geq0$,
\[
 \max_{\mathfrak P\in\Om_F(n)}L(\sigma_{A,n},\chi_{\mathfrak P})
 \geq\exp\{(e^{-A}+o(1))\mathcal V_F(n)\}.
\]
\item For every fixed $A\geq0$ and $0<c<1$,
\[
 \#\{\mathfrak P\in\Om_F(n):L(\sigma_{A,n},\chi_{\mathfrak P})
                \geq e^{ce^{-A}\mathcal V_F(n)}\}\geq X_n^{1-o(1)}.
\]
\item For every fixed $A_0\geq0$, there is a function
$\eta_{F,A_0}(n)\to0$ such that, uniformly for $0\leq A\leq A_0$,
\[
 \#\{\mathfrak P\in\Om_F(n):L(\sigma_{A,n},\chi_{\mathfrak P})
       \geq e^{(1-\eta_{F,A_0}(n))e^{-A}\mathcal V_F(n)}\}
       \geq X_n^{1-o(1)}.
\]
\end{enumerate}
The first two assertions are uniform on fixed compact intervals of $A$.
No unproved hypothesis is required.
\end{theorem}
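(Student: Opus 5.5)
We follow the resonance scheme behind Theorem~\ref{thm:central-max} and replace each GRH input by its function field counterpart, which is a theorem there (Weil's Riemann hypothesis for curves). Fix a sparse squarefree G\'al set $\mathcal M$ of monic polynomials (equivalently, effective divisors supported away from $\infty$). The primes in it have degree about $\log n$ and lie in a range dictated by $\mathcal V_F(n)$. Form the resonator $R(\mathfrak P)=\sum_{\mideal\in\mathcal M}\chi_{\mathfrak P}(\mideal)$ and compare the two sums
\[
 S_1=\sum_{\mathfrak P\in\Om_F(n)}L(\sigma_{A,n},\chi_{\mathfrak P})R(\mathfrak P)^2,
 \qquad S_2=\sum_{\mathfrak P\in\Om_F(n)}R(\mathfrak P)^2.
\]

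\emph{Step 1: the twisted first moment.} Since the conductor $\mathfrak P+\infty$ has degree $n+O(1)$, the $L$-function is a polynomial in $u=q^{-s}$ with a functional equation. Its approximate functional equation is therefore exact: a Dirichlet polynomial of length about $n/2$ in degree plus a dual sum. We expand $L\cdot R^2$ and sum over $\mathfrak P\in\Om_F(n)$, using quadratic reciprocity in $F$ for the normalized generators, to swap $\chi_{\mathfrak P}(\mideal)$ for a character in $\mideal$ evaluated at $\mathfrak P$. The prime polynomial theorem in arithmetic progressions, with Weil's error $O(q^{n/2}\deg)$, then shows that the main term comes from $\mideal_1\mideal_2\lideal=\square$. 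Off-square terms contribute $O(q^{n/2+\eps n})$ times the resonator's $\ell^1$ mass, which is negligible because $\mathcal M$ is sparse with total degree $\le\eps n$. The dual sum at $\sigma_{A,n}>1/2$ is bounded the same way. This gives the lower bound $S_1/S_2\ge\exp\{(e^{-A}+o(1))\mathcal V_F(n)\}$ through the G\'al-type combinatorial optimization that produces the constant $2$. Here the factor $e^{-A}$ comes from $\Norm\p^{-A/\log_2X_n}$ at primes of degree $\asymp\log n$, and the leading-order form is given by \eqref{eq:VF}. This proves (i), uniformly for $A$ in a compact set.

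\emph{Step 2: abundance.} For (ii), let $T=e^{ce^{-A}\mathcal V_F(n)}$. By Cauchy--Schwarz,
\[
 (1-o(1))S_1\le\sum_{L\ge T}LR^2\le\Bigl(\sum L^2\Bigr)^{1/2}\Bigl(\sum_{L\ge T}R^4\Bigr)^{1/2}.
\]
We bound $\sum_{L\ge T}R^4\le\#\{L\ge T\}\max R^4$, or better control it by H\"older with the fourth moment of $R$. The square multiplicative energy $N^{2+o(1)}$ of the G\'al set bounds $\sum R^4$ by $X_nN^{2+o(1)}$ through the same orthogonality. The second moment $\sum L(\sigma,\chi_{\mathfrak P})^2\ll X_n\exp(o(\mathcal V_F))$ holds unconditionally for hyperelliptic-type families over $\mathbb F_q(C)$, either via Weil-based moment bounds or a crude Soundararajan-type upper bound, which is unconditional since RH holds. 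Since $S_2\asymp X_nN$, comparing the two sides gives $\#\{L\ge T\}\ge X_n^{1-o(1)}$, because $T$ falls short of $S_1/S_2$ by a factor $e^{(1-c)e^{-A}\mathcal V_F}$, which absorbs all $\exp(o(\mathcal V_F))$ losses. Part (iii) follows by diagonal selection. For each $k$ we take $c_k=1-1/k$ and the threshold $n_k$ beyond which (ii) holds with exponent loss $1/k$, uniformly in $A\in[0,A_0]$, and set $\eta_{F,A_0}(n)=1/k$ for $n_k\le n<n_{k+1}$.

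\emph{Main obstacle.} The delicate step is Step 1's character sum over the prime family $\Om_F(n)$. We need reciprocity to hold uniformly on the component $\Om_F$, including the $\infty$-sign and the odd degree of $\infty$, and the Weil-bound error must beat the resonator length. I would first verify that admissibility of $n$ makes $\chi_{\mathfrak P}(\mideal)$ a genuine nontrivial character of $\mathfrak P$ modulo $\mideal$ (times $\infty$-data) whenever $\mideal\ne\square$. The second potential obstacle is the unconditional second moment. If a sharp bound is not available, a bound of the form $X_n^{1+o(1)}$ suffices, since only $X^{1-o(1)}$ is claimed.
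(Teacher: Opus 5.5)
\textbf{Gap in Step~2.} Your Step~2 does not produce $X_n^{1-o(1)}$ characters. Let $\mathcal H$ be the set where $L(\sigma_{A,n},\chi_{\mathfrak P})\geq T$. In your pairing $\sum_{\mathcal H}LR^2\leq(\sum L^2)^{1/2}(\sum_{\mathcal H}R^4)^{1/2}$, the cardinality $\#\mathcal H$ can enter only through $\sum_{\mathcal H}R^4$.
\begin{itemize}
\item If you bound this by $\#\mathcal H\cdot N^4$, you get only $\#\mathcal H\geq X_n^{1-2\beta+o(1)}$. For any $\beta>c^2/4$ this is a fixed power loss, roughly $X_n^{1/2}$ when $c$ is near $1$.
\item If you bound it by the full fourth moment $X_nN^{2+o(1)}$, then $\#\mathcal H$ disappears. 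The resulting inequality is vacuous, because the factor $N^{o(1)}$ from the energy bound swallows $e^{O(\mathcal V_F(n))}$.
\item H\"older against $\sum R^{2p}$ with $p>2$ needs the orthogonality error $X_n^{1/2}N^{2p}$ to lie below $X_nN^{p}$. That forces $\beta<1/(2p)$, which again loses the constant.
\end{itemize}
The structural reason is this: with $\sum L^p$ in H\"older, $\#\mathcal H$ appears with exponent $1/2-1/p$, which vanishes exactly at $p=2$. So your remark that a bound $\sum L^2\leq X_n^{1+o(1)}$ would suffice is incorrect.

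\textbf{The missing ingredient.} You need a pointwise subpower bound $0\leq L(\sigma,\chi_{\mathfrak P})\leq X_n^{o(1)}$, uniform in $\mathfrak P\in\Om_F(n)$ and $\sigma\geq1/2$. This is Lemma~\ref{lem:ff-upper}. The paper derives it from the Weil factorization $\prod_j(1-ze^{i\theta_j})$, using a logarithmic majorant truncated at $k\leq\log_qn$ and the maximum modulus principle. Your ``crude Soundararajan-type upper bound'' is the right tool, but it must be used pointwise rather than to bound a second moment. Then $\sum_{\mathcal H}LR^2\leq X_n^{o(1)}(\#\mathcal H)^{1/2}(\sum R^4)^{1/2}$, and $E_\square(\mathcal M_F)\leq N^{2+o(1)}$ gives $\#\mathcal H\geq X_n^{1-o(1)}$. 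A bound $\sum L^p\leq X_n^{1+o(1)}$ for some fixed $p>2$ would also work.

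\textbf{Corrections to Step~1.}
\begin{itemize}
\item The resonator cannot have total degree at most $\eps n$ for small $\eps$. There are only $O(q^{\eps n})$ such ideals, while you need $N=X_n^{1/4-o(1)}$. In the paper's construction the degrees are of order $n\log n/\log_2n$, well above $n$.
\item The off-square terms are harmless only because the Weil-bound error is linear in $\deg\mathfrak l_0$, hence $n^{O(1)}$.
\item The real constraint is $X_n^{1/2}q^{n/4}N^2n^{O(1)}=o(X_nN/n)$, i.e.\ $\beta<1/4$. The factor $q^{n/4}$ comes from the length of the exact functional-equation sum, which your error accounting omits.
\item To discard every square term except $\mathfrak l=[\mathfrak m,\mathfrak r]/(\mathfrak m,\mathfrak r)$, you need all of them, including the dual ones, to be nonnegative. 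The root-number-one functional equation supplies this by merging both sums into one with weights in $[1,2]$.
\end{itemize}
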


The restriction to prime conductors gives a twisted character average
with error $O_F((1+\deg\mathfrak l_0)q^{n/2}/n)$, where
$\mathfrak l_0$ is the squarefree part of the twisting ideal.
Its linear dependence on $\deg\mathfrak l_0$ is useful here, since the
resonator contains divisors of degree much larger than $n$.

We also obtain number field bounds at fixed points to the right of
the centre.
For $0<b<1$ and $1/2<\sigma<1$, put
\begin{equation}\label{eq:alpha-def}
 \alpha(b)=2\log\frac{(1+b)^2}{1+b^2},\qquad
 \widetilde\alpha(\sigma,b)=
 \frac{2b}{(1+b^2)(1-\sigma)}\alpha(b)^{\sigma-1}.
\end{equation}
The factor $2b/(1+b^2)$ comes from the local square product count.
$\alpha(b)$ determines the permitted length of the resonator.

\begin{theorem}\label{thm:strip}
Assume \textup{GRH}$_K$, and fix $1/2<\sigma<1$ and $0<b<1$.
\begin{enumerate}
\item One has
\[
 \max_{\aideal\in\Om(X)}\log L(\sigma,\chi_{\aideal})
 \geq(\widetilde\alpha(\sigma,b)+o(1))
               (\log X)^{1-\sigma}(\log_2X)^{-\sigma}.
\]
\item For $0<\eta<1/\alpha(b)$, define
\[
 \tau_{\sigma,\eta}(X)=
 \widetilde\alpha(\sigma,b)(1-\eta\alpha(b))^{1-\sigma}
               (\log X)^{1-\sigma}(\log_2X)^{-\sigma}.
\]
Then
\[
 \frac{\#\{\aideal\in\Om(X):
       \log L(\sigma,\chi_{\aideal})>\tau_{\sigma,\eta}(X)\}}
 {\#\Om(X)}\geq X^{-\frac12(1-\eta\alpha(b))+o(1)}.
\]
\end{enumerate}
\end{theorem}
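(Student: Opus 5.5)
We follow the resonance method in the form of Aistleitner--Mahatab--Munsch, adapted to the quadratic family $\Om(X)$. First we set up a resonator
\[
 R(\aideal)=\sum_{\mideal}r(\mideal)\chi_{\aideal}(\mideal),
\]
with $r$ supported on squarefree ideals $\mideal$ coprime to $S$ whose prime factors have norm in a range $[\log X\log_2X,\ \exp((\log_2X)^{b'})]$. We take $r$ multiplicative, with
\[
 r(\p)=\frac{b}{1+b^{2}}\cdot\frac{\Norm\p^{-\sigma}}{\ldots}\quad\text{or simply}\quad r(\p)=b\cdot g(\p),
\]
where the weight $g(\p)\approx (\log X\log_2X)^{1-\sigma}/(\Norm\p^{\sigma}\log\Norm\p)$ is of the usual AMM form. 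The resonator length is restricted so that the total "mass" $\prod(1+r(\p)^2)$ is at most $X^{(1-\eta\alpha(b))/2}$ in part (ii), and $X^{1/2-\epsilon}$ in part (i). The parameter $b$ enters through the local factor $(1+b)^2/(1+b^2)$, and $\alpha(b)$ is defined exactly so that $\log\prod$ over primes is calibrated to this length.

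Next we compute the two moments
\[
 M_1=\sum_{\aideal\in\Om(X)}|R(\aideal)|^2, \qquad M_2=\sum_{\aideal\in\Om(X)}L(\sigma,\chi_{\aideal})|R(\aideal)|^2 .
\]
We expand $L(\sigma,\chi_\aideal)$ as a smoothed Dirichlet series of length $X^{\epsilon}$. Under GRH$_K$ this is legitimate at fixed $\sigma>1/2$ via the standard approximation of $\log L$ by short prime sums, or alternatively via a smoothed sum with a GRH-shifted contour. We then evaluate the character sums over $\Om(X)$ with the twisted orthogonality input on which the family was built: $\sum_{\aideal\in\Om(X)}\chi_\aideal(\nideal)$ has main term $\kappa_\Om X$ times a local density when $\nideal$ is a square, and error $O(X^{1/2+\epsilon}\Norm\nideal^{\epsilon})$ under GRH$_K$. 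The off-diagonal errors are then bounded by $X^{1/2+\epsilon}(\sum r)^2$, which is acceptable precisely because of the length restriction. The diagonal main terms factor as Euler products, giving
\[
 \frac{M_2}{M_1}\geq\exp\Bigl(\sum_{\p}\frac{2r(\p)\Norm\p^{-\sigma}}{1+r(\p)^2}(1+o(1))\Bigr).
\]
Evaluating the prime sum by the prime ideal theorem produces the constant $\widetilde\alpha(\sigma,b)$: the factor $2b/(1+b^2)$ comes from $2r/(1+r^2)$, and $(1-\sigma)^{-1}\alpha(b)^{\sigma-1}$ from integrating over the prime range fixed by the length. This proves (i), after also using positivity of $L(\sigma,\chi)$, which holds under GRH, to pass from the weighted average to the maximum.

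For (ii) we use the threshold argument. We split $M_2$ into characters with $\log L\le\tau$ and the rest. The first part is at most $e^{\tau}M_1$, and since $\tau$ is slightly smaller than the value forced in (i) at the shortened length, $M_2\ge 2e^{\tau}M_1$. Thus
\[
 \sum_{\log L>\tau}L|R|^2\ge \tfrac12 M_2 .
\]
By Cauchy--Schwarz, together with the pointwise bound $|R(\aideal)|^2\le(\sum r)^2$ and a GRH upper bound $L(\sigma,\chi)\le\exp(O((\log X)^{1-\sigma}))$, we get
\[
 \#\{\log L>\tau\}\ \ge\ \frac{M_2}{\max L\cdot\max|R|^2}.
\]
Because $M_1\asymp X\prod(1+r^2)$ and $\max|R|^2\le\prod(1+r)^2$, the ratio is $X\cdot\prod\bigl((1+r^2)/(1+r)^2\bigr)$, which equals $X^{1-(1-\eta\alpha(b))/2+o(1)}$ relative to $\#\Om(X)$ by the choice of $\alpha(b)$. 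The main obstacle is this bookkeeping: we must choose the prime range and cutoff so that the product $\prod(1+r)^2/(1+r^2)$ is exactly $X^{(1-\eta\alpha(b))/2}$ while the off-diagonal GRH error stays below the main term. We must also justify that the upper bound for $\max L$ costs only $X^{o(1)}$.
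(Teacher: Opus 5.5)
\textbf{The gap is the resonator.} You place $r$ on primes with $\Norm\p\in[\log X\log_2X,\exp((\log_2X)^{b'})]$, with AMM-type weights $r(\p)=b\,g(\p)$. You then use two identities: that $2r/(1+r^2)$ produces $2b/(1+b^2)$, and that $\prod(1+r)^2/(1+r^2)=X^{(1-\eta\alpha(b))/2}$. Both hold only when $r_\p\equiv b$.

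In your own framework, the off-diagonal error $X^{1/2+\eps}(\sum|r|)^2$ must stay below the diagonal $X\sum r^2$. This forces the budget $\sum_\p\log\frac{(1+r_\p)^2}{1+r_\p^2}\le(\frac12-\eps)\log X$, while the gain is $\sum_\p\frac{2r_\p}{1+r_\p^2}(\Norm\p)^{-\sigma}+O(1)$. For a fixed value of $r_\p$ the gain per unit cost is proportional to $(\Norm\p)^{-\sigma}$, so the budget should go to the \emph{smallest} primes.

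With your weights, $r(\p)\le b(\log X\log_2X)^{1-2\sigma}\to0$, and the cost per prime is $(2+o(1))r_\p$. The gain is therefore at most $(\log X\log_2X)^{-\sigma}$ times the cost, which totals at most $(\frac12+o(1))(\log X)^{1-\sigma}(\log_2X)^{-\sigma}$. With your literal $g$ it is even of lower order. But already for $\sigma=3/5$, $b=1/2$ the target constant is $\widetilde\alpha(\sigma,b)\approx1.87$.

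The missing choice is the constant-coefficient product $R_\aideal=\prod_{\Norm\p\le Y,\ \p\notin S}(1-b\chi_\aideal(\p))^{-1}$ with $Y=a\log X\log_2X$. Then:
\begin{itemize}
\item the per-prime cost is exactly $\log\frac{(1+b)^2}{1+b^2}=\alpha(b)/2$, so the total cost is $X^{a\alpha(b)/2+o(1)}$, which forces $a<1/\alpha(b)$;
\item the gain is $\frac{2b}{1+b^2}\sum_{\Norm\p\le Y}(\Norm\p)^{-\sigma}\sim\frac{2b}{1+b^2}\frac{a^{1-\sigma}}{1-\sigma}(\log X)^{1-\sigma}(\log_2X)^{-\sigma}$.
\end{itemize}
The factor $(1-\sigma)^{-1}$ comes from summing over \emph{all} primes up to $Y$, and $\alpha(b)^{\sigma-1}$ comes from letting $a\uparrow1/\alpha(b)$. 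This is the paper's construction.

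\textbf{Two further steps need repair.}

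First, the orthogonality error $O(X^{1/2+\eps}(\Norm\nideal)^\eps)$ you quote cannot be summed to $X^{1/2+\eps}(\sum r)^2$. Resonator ideals built from all primes up to $Y$ have norms up to $X^{\asymp\log_2X}$. With squarefree support, $\sum r_\mideal r_\nideal\Norm(\mideal\nideal)^\eps$ exceeds $(\sum r)^2$ by a factor $X^{\gg\eps\log_2X}$, and it diverges for completely multiplicative $r$. You need the GRH bound with subpolynomial conductor dependence, together with control of the Euler factors removed at primes of the square part. This is what the factors $F_\eps,G_\eps$ in Proposition~\ref{prop:orthogonality} provide.

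Second, in (ii), at $a_0=1/\alpha(b)-\eta$ the weighted mean equals $\tau_{\sigma,\eta}(X)$ only up to $o((\log X)^{1-\sigma}(\log_2X)^{-\sigma})$. So $M_2\ge2e^{\tau}M_1$ is not available there. You must take $a$ slightly above $a_0$; the paper uses $a_0+\rho_X$ with $\rho_X\to0$ slowly. This changes the exponent of the proportion only by $o(1)$.

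Finally, averaging $L$ itself rather than the short prime sum $P_\sigma$ of Lemma~\ref{lem:short-log} is a workable variant. However, it requires controlling the truncation of the Dirichlet series at $X^\eps$, for instance by a Rankin-type bound. The paper's route is simpler because each diagonal term contains only one inserted prime.
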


At $s=1$ the leading multiplier includes the residue
$\kappa_K=\Res_{s=1}\zeta_K(s)$ and the factors at the fixed ramified
primes. With $\gamma$ denoting Euler's constant, put
\begin{equation}\label{eq:MKOmega}
 \mathcal M_{K,\Om}=e^\gamma\kappa_K
                       \prod_{\p\in S}(1-(\Norm\p)^{-1}).
\end{equation}
We shall also use the constant
\begin{equation}\label{eq:C2}
 C_2=\frac\pi4-\frac{\log2}{2}
       +\log\left(2\left(3\log2-\frac\pi2\right)\right)
       =0.455967\ldots.
\end{equation}

\begin{theorem}\label{thm:one-line}
Assume \textup{GRH}$_K$.
\begin{enumerate}
\item One has
\[
 \max_{\aideal\in\Om(X)}L(1,\chi_{\aideal})
 \geq\mathcal M_{K,\Om}(\log_2X+\log_3X-C_2+o(1)).
\]
\item For every fixed $\eta>0$, the proportion of $\aideal\in\Om(X)$ with
\[
 L(1,\chi_{\aideal})>
 \mathcal M_{K,\Om}(\log_2X+\log_3X-C_2-\eta)
\]
is at least $X^{-e^{-\eta}/2+o(1)}$.
\end{enumerate}
\end{theorem}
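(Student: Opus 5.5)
Theorem~\ref{thm:one-line} will follow from a resonance argument at $s=1$ with a multiplicative resonator supported on squarefree ideals built from small primes. The model is the long-resonator method of Aistleitner--Mahatab--Munsch for $L(1,\chi)$, which gives $e^\gamma(\log_2 X+\log_3X-C_2)$ over $\mathbb Q$. We take
\[
 R(\aideal)=\sum_{\mideal}r(\mideal)\chi_{\aideal}(\mideal),
\]
with $r$ multiplicative, supported on squarefree $\mideal$ coprime to $S$, and $r(\p)=f(\Norm\p)$ for primes with $\Norm\p$ roughly between $\log X\log_2X$ and a power of $\log X$ times $\log_2X$. On this range we use $f(t)=\lambda/(\sqrt t\,\log t)$-type weights, chosen exactly as in that paper; this choice is what produces the constant $C_2$. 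The resonator length will be $X^{\theta}$ with $\theta<1/2$, which is admissible because the prime-power support makes $R$ highly multiplicative.

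The steps are as follows. First, approximate $L(1,\chi_{\aideal})$ under GRH$_K$ by a short Euler product over $\Norm\p\le(\log X)^{B}$, with error $o(1)$ uniformly in $\aideal$. Alternatively, use a smoothed Dirichlet series of length $X^{\eps}$; this lets us work with $\sum_{\nideal}\chi_{\aideal}(\nideal)\Norm\nideal^{-1}w(\Norm\nideal/Y)$. Second, compute the two weighted sums
\[
 M_1=\sum_{\aideal\in\Om(X)}|R(\aideal)|^2\quad\text{and}\quad M_2=\sum_{\aideal\in\Om(X)}L(1,\chi_{\aideal})|R(\aideal)|^2 .
\]
Expanding the squares, we get character averages $\sum_{\aideal\in\Om(X)}\chi_{\aideal}(\lideal)$. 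By the reciprocity/GRH$_K$ estimate behind the family construction, these equal a main term $\kappa_\Om X$ times local densities when $\lideal$ is a square coprime to $S$, plus an error of size $O(X^{1/2+\eps}\Norm\lideal^{\eps})$. The diagonal then gives
\[
 \frac{M_2}{M_1}\approx \sum_{\nideal}\frac{1}{\Norm\nideal}\sum_{\mideal\mid\nideal}\ldots,
\]
which factors as an Euler product. Over primes not in the resonator support and off $S$, this contributes $\prod_{\p}(1-\Norm\p^{-2})^{-1}$-type local terms combined with Mertens' theorem over $K$. Mertens over $K$ gives $\prod_{\Norm\p\le y}(1-\Norm\p^{-1})^{-1}\sim e^\gamma\kappa_K\log y$, and removing the primes of $S$ yields $\mathcal M_{K,\Om}$. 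Over resonator primes, each factor is $(1+\Norm\p^{-1}\cdot 2r(\p)/(1+r(\p)^2)\cdots)$, and the optimization produces $\log_2X+\log_3X-C_2$ as in the rational case. Since the prime ideal theorem with GRH error holds over $K$, the computation over $\mathbb Q$ transfers verbatim. Part (i) then follows from $\max L\ge M_2/M_1$.

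For (ii), we use the standard abundance argument. Let $\mathcal E$ be the set of $\aideal$ where $L(1,\chi_{\aideal})$ exceeds the threshold with $-\eta$. Then
\[
 M_2\le \mathcal M_{K,\Om}(\log_2X+\log_3X-C_2-\eta)M_1+\sum_{\aideal\in\mathcal E}L(1,\chi_{\aideal})|R(\aideal)|^2 .
\]
Hence $\sum_{\mathcal E}L|R|^2\gg\eta\log_2X\cdot M_1$ up to constants. We bound $L(1,\chi)\ll\log_2X$ under GRH and apply Cauchy--Schwarz/Hölder with the fourth moment $M_4=\sum|R|^4$, or more simply use $\max|R|^2\le(\sum r)^2$. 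This gives $\#\mathcal E\ge M_1^2/M_4\cdot(\ldots)$. To reach the exponent $-e^{-\eta}/2$, the resonator length must be tuned: shrinking the support to primes beyond $(\log X)^{e^{-\eta}}$-scale changes the main term by $-\eta$ exactly, because $\log$ of the range shifts by $\eta$. The ratio $M_1/(\#\Om(X)\max|R|^2)$ is then $X^{-e^{-\eta}/2+o(1)}$.

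The main obstacle is this last calibration. We must show that the truncated resonator both stays within the threshold and has $\max|R|^2/M_1$ no larger than $X^{e^{-\eta}/2+o(1)}$. That requires the precise asymptotics of $\sum_{\p}\log(1+r(\p))$ versus $\sum\log(1+r(\p)^2)$ over $K$. I would first attempt it by rescaling the AMM parameters and verifying with the prime ideal theorem.
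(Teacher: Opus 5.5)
\textbf{There is a genuine gap: the resonator you describe cannot produce the $\log_2X$ term, so $C_2$ cannot come out of it.}

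Carry out your own diagonal computation. Take squarefree support and $r(\p)\approx\lambda/(\sqrt{\Norm\p}\log\Norm\p)$ only on primes of norm $\geq\log X\log_2X$. At a resonator prime the local factor of $M_2/M_1$ is at most $(1-\Norm\p^{-2})^{-1}\bigl(1+2r(\p)/(\Norm\p(1+r(\p)^2))\bigr)$. At every other prime it is at most $(1-\Norm\p^{-2})^{-1}$, because without resonator mass at $\p$ only even powers of $\p$ in the expansion of $L$ survive the character average. Hence $M_2/M_1=O(1)$. The Mertens factor $(1-\Norm\p^{-1})^{-1}$ arises only at primes where the resonator weight tends to $1$. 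So your step combining the non-resonator primes with Mertens' theorem is false. The weights you quote are Soundararajan's central-point choice.

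The paper instead follows Darbar--Maiti and uses the completely multiplicative resonator $R_\aideal=\prod_{\Norm\p<z,\,\p\notin S}(1-r_\p\chi_\aideal(\p))^{-1}$, with $r_\p=1-\Norm\p/z$ on \emph{all} primes below $z=c^{-1}\log X\log_2X$. Complete multiplicativity matters too. With squarefree support, the local ratio at $q=\Norm\p$ falls short of $(1-q^{-1})^{-1}$ by a relative amount $\asymp q^{-2}$, coming from the ramified density $1-h_K(\p)$. At $r_\p=1$ it equals $(1-q^{-1})^{-1}(1-1/(2q^2+q))$. This costs a fixed factor $<1$ in front of $\mathcal M_{K,\Om}$.

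With the geometric local factors, the exact local identity gives $(1-q^{-1})N_\p/D_\p=1-z^{-1}u/(2-2u+u^2)+O(z^{-2})$ with $u=q/z$. Summed over primes this contributes $-c_3/\log z$, where $c_3=\pi/4-\tfrac12\log2$. The second moment is $X^{1+\lambda/c+o(1)}$ with $\lambda=2-3\log2+\pi/2$, while the averaging error is $X^{1/2+\eps+2/c+o(1)}$. This forces $c>c_0=2(3\log2-\pi/2)$. Then $\log z=\log_2X+\log_3X-\log c$ gives $C_2=c_3+\log c_0$. None of this is a rational computation transferred verbatim from your weights.

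Two further steps must change once the resonator is corrected.

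First, an averaging error $O(X^{1/2+\eps}\Norm\lideal^\eps)$ is useless here. The series $\sum_\mideal r_\mideal(\Norm\mideal)^\eps$ diverges: its Euler factor at $\p$ is $\sum_k(r_\p(\Norm\p)^\eps)^k$, and $r_\p(\Norm\p)^\eps>1$ for small $\p\notin S$ once $z$ is large. You need an error that sees only the squarefree part through a subpolynomial factor, namely $F_\eps(\nideal_0)G_\eps(\nideal_1)$ as in Proposition~\ref{prop:orthogonality}. It is summed over the long resonator as in Lemma~\ref{lem:euler-error}.

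Second, your calibration in (ii) is off. Replacing the scale by $(\log X)^{e^{-\eta}}$ lowers $\log z$ by $(1-e^{-\eta})\log_2X$, not by $\eta$. The correct move is $c=c_0e^{\eta'}$ with $0<\eta'<\eta$, i.e.\ shrinking $z$ by the factor $e^{-\eta'}$. The weighted mean then exceeds the threshold by $\mathcal M_{K,\Om}(\eta-\eta'+o(1))$. That is a constant, not $\eta\log_2X$, but it suffices because $L(1,\chi_\aideal)\ll\log_2X$. No fourth moment is needed. The pointwise bound $R_\aideal^2\le\prod_{\Norm\p<z}(z/\Norm\p)^2=X^{2/c+o(1)}$, together with $S_0=X^{1+\lambda/c+o(1)}$, gives a proportion $X^{-(2-\lambda)/c+o(1)}=X^{-e^{-\eta'}/2+o(1)}$, because $c_0=2(2-\lambda)$. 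Finally let $\eta'\uparrow\eta$.
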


\subsection{Earlier work}

Soundararajan introduced the resonance method for extreme values of zeta
and $L$-functions \cite[Theorems~1--2]{Sound2008}. Its connection with
GCD sums was developed by Aistleitner \cite{Aistleitner2016} and by
Bondarenko and Seip \cite{BondarenkoSeipDuke,BondarenkoSeipMathAnn}.
Our combinatorial construction follows the sparse prime blocks of
de la Bret\`eche and Tenenbaum \cite[Section~2.2]{BretTenenbaum} and
retains the leading constant in their squarefree G\'al estimate
\cite[equation~(1.5), Remark after Theorem~1.1]{BretTenenbaum}.

For quadratic Dirichlet $L$-functions, Darbar and Maiti
\cite[Theorem~1]{DarbarMaiti} proved, under GRH, a central maximum
lower bound with coefficient $1/2$ on the scale $\mathcal V(X)$.
Dong, Wang, Zhang and Zhao \cite[Theorem~1.1]{DongWangZhangZhao}
obtained coefficient $1$. At the moving point $1/2+A/\log_2X$,
Dong, Li, Song and Zhao \cite[Theorem~1.1]{DongLiSongZhao} obtained
$e^{-A}/(2(e-1))$. Theorem~\ref{thm:central-max} gives $e^{-A}$
and counts the characters exceeding every fixed smaller threshold.
At the centre over $\mathbb Q$, the maximum coefficient is thus the
one already known. The additional conclusion is the abundance bound,
which holds even in the congruence class of
Corollary~\ref{cor:rational-abundance}. Over a fixed number field,
the theorem gives both assertions in the component $\Om$.
These are lower bounds on the resonance scale, rather than asymptotic
formulas for the true maximum. See \cite{FarmerGonekHughes} for
conjectures on extreme values.

The point of the construction is to retain the G\'al gain while keeping
the fourth moment small. Proposition~\ref{prop:gal} provides one set
with the required cardinality, prime support, shifted G\'al sum and
square energy. Lemma~\ref{lem:gal-truncation} shows that its gain
survives the truncation in the approximate functional equation.
The energy estimate follows from sparsity, either by the counting
argument in Lemma~\ref{lem:hamming-energy} or by Bonami
hypercontractivity, as noted in Remark~\ref{rem:hypercontractive-energy}.
For function fields we use blocks of prescribed prime degree, where
the discreteness of the norms must be taken into account.

Table~\ref{tab:central-comparison} summarizes these comparisons.
A coefficient $C$ denotes a maximum lower bound
$\exp\{(C+o(1))\mathcal V(X)\}$, with $\mathcal V_F(n)$ in place
of $\mathcal V(X)$ over function fields. The abundance entries refer
to every fixed coefficient $cC$, $0<c<1$. The number field bounds
assume GRH, whereas the function field bounds are unconditional.
Only the indicated maximum and abundance statements are compared.

\begin{table}[htbp]
\centering
\small
\setlength{\tabcolsep}{3pt}
\renewcommand{\arraystretch}{1.2}
\caption{Central and moving-point lower bounds on the resonance scale.}
\label{tab:central-comparison}
\begin{tabular}{@{}>{\raggedright\arraybackslash}p{.30\textwidth}
                  >{\raggedright\arraybackslash}p{.27\textwidth}
                  >{\centering\arraybackslash}p{.16\textwidth}
                  >{\raggedright\arraybackslash}p{.21\textwidth}@{}}
\toprule
Source & Family and point & Coefficient & Result compared \\
\midrule
Darbar--Maiti \cite{DarbarMaiti}
 & Quadratic Dirichlet, $A=0$ & $1/2$ & Maximum \\
Dong et al.\ \cite{DongWangZhangZhao}
 & Quadratic Dirichlet, $A=0$ & $1$ & Maximum \\
Dong et al.\ \cite{DongLiSongZhao}
 & Quadratic Dirichlet, $A>0$ & $\dfrac{e^{-A}}{2(e-1)}$ & Maximum \\
Theorem~\ref{thm:central-max}; Corollary~\ref{cor:rational-abundance}
 & Fixed number field, $A\geq0$ & $e^{-A}$ & Maximum and $X^{1-o(1)}$ count \\
\midrule
Darbar--Maiti \cite{DarbarMaitiFF}
 & Prime polynomials over $\mathbb F_q(t)$, $A=0$ & $1/2$ & Maximum \\
Theorem~\ref{thm:function-field}
 & Principal primes over a fixed curve, $A\geq0$ & $e^{-A}$
 & Maximum and $X_n^{1-o(1)}$ count \\
\bottomrule
\end{tabular}
\end{table}

Related results with restrictions on rational conductors appear in
Fan, Hua and Xie \cite{FanHuaXie}, Gao \cite{GaoPrime}, and
Dong, Wang, Zhang and Zhao \cite{DongWangZhangZhaoPrime}.
For number fields, the quadratic Hecke families of Goldmakher and Louvel
\cite[Definition~1 and Section~2]{GoldmakherLouvel} provide the relevant
reciprocity framework. We work in an explicit Kummer component and
prove the character average needed for that component directly.
Moment results over imaginary quadratic fields may be found in
\cite{GaoZhaoMoments,GaoZhaoFirstMoment}. Other families of Hecke
$L$-functions include the angular characters considered by White
\cite{White} and the class-group characters considered by
Campos-Vargas \cite{CamposVargas}.

Over $\mathbb F_q(t)$, Darbar and Maiti
\cite[Theorem~1]{DarbarMaitiFF} obtained coefficient $1/2$ for the
central maximum in a prime-polynomial family. Theorem~\ref{thm:function-field}
gives coefficient $1$, together with abundance and moving point bounds,
over any fixed curve of odd characteristic. We discuss the conventions
for polynomial quadratic symbols in Section~\ref{sec:function-field}.
For large values and value distributions in function fields, see
\cite{DjokicLelasVrecica,LumleyStrip,LumleyOne}. For central moments
and limit theorems, see \cite{AndradeKeating,Jung,DarbarLumley}.

Theorems~\ref{thm:strip} and \ref{thm:one-line} use the short
Euler product approach of \cite{AistleitnerMahatabMunschPeyrot} and
the quadratic resonators of \cite{DarbarMaiti}. They give number field
versions with the local factors kept explicit. Related results in the
strip appear in \cite{Lamzouri,Lamzouri2026}, and results at $1$ in
\cite{MontgomeryVaughan,GranvilleSoundararajan,AistleitnerMahatabMunsch}.

\subsection{Outline of the proof}

For a set $\mathcal M$ of squarefree ideals, consider
\[
 R_\chi=\sum_{\mideal\in\mathcal M}\chi(\mideal).
\]
We estimate the first moment of $L(\sigma,\chi)$ with weight
$|R_\chi|^2$ and compare it with the second moment of the resonator.
After applying the approximate functional equation and averaging over
the characters, square products give the main term. Given
$\mideal,\nideal\in\mathcal M$, the ideal
$[\mideal,\nideal]/(\mideal,\nideal)$ makes their product a square.
Here parentheses and brackets denote the ideal gcd and lcm. Its
contribution to the first moment contains the kernel
\[
 \left(\frac{\Norm(\mideal,\nideal)}{\Norm[\mideal,\nideal]}\right)^\sigma.
\]
The first moment lower bound is therefore governed by a G\'al sum.

The fourth moment involves a different count:
$E_\square(\mathcal M)$, the number of ordered quadruples in
$\mathcal M$ whose product is a square. For $|\mathcal M|=N$,
we require $E_\square(\mathcal M)\leq N^{2+o(1)}$ as well as the
G\'al lower bound. Both estimates are obtained by taking products of
sparse subsets of disjoint prime blocks. Once the fourth moment has
been bounded, Cauchy--Schwarz and the individual estimate
$|L(\sigma,\chi)|\leq X^{o(1)}$ give the abundance assertion.

For each fixed $\varepsilon>0$, the error in the number field weighted
first moment is $O(X^{3/4+\varepsilon+o(1)}N^2)$, compared with the
main scale $XN$. We take $N=X^\beta$ with fixed $\beta<1/4$ and
$\varepsilon<1/4-\beta$. The G\'al constant $2$ then gives
$2\sqrt\beta$ in the maximum bound. Letting $\beta\uparrow1/4$
gives coefficient $1$. At the moving point, the primes in the blocks
satisfy $\log\Norm\p=(1+o(1))\log_2N$, and
$\log_2N/\log_2X\to1$. The shift in each prime weight thus
contributes $e^{-A}+o(1)$, uniformly for bounded $A$.

Sections~\ref{sec:family}--\ref{sec:analytic} establish the family,
the character average and the analytic estimates.
The G\'al construction and its truncation are proved in
Section~\ref{sec:gal}, and Theorem~\ref{thm:central-max} follows in
Section~\ref{sec:central}. Section~\ref{sec:function-field} treats
function fields. The fixed-strip and $1$-line arguments occupy
Sections~\ref{sec:strip} and \ref{sec:one}.

Throughout, $O(\cdot)$, $\ll$, and $o(\cdot)$ refer to the parameter
tending to infinity. Implicit constants may depend on the fixed field,
the fixed component, and explicitly fixed auxiliary parameters.
When a parameter varies over a compact interval, we state the required
uniformity. We use $\one_E$ for the indicator of a condition or set $E$.

\section{The quadratic family}\label{sec:family}

We construct a family with two properties: its conductors vary with
squarefree ideals, and its fixed local factors contribute no signs to the
character averages.
Starting from a fixed quadratic extension, we multiply its defining
square class by generators of suitable principal ideals. The construction
is a restricted form of that in Goldmakher--Louvel
\cite[Section~2]{GoldmakherLouvel}, with the local conditions specified
for the averages below.

Let $\mathbb A_K$ denote the adele ring of $K$. A finite-order Hecke
character is a continuous character
\[
 \omega:K^\times\backslash\mathbb A_K^\times\longrightarrow\mathbb C^\times
\]
with finite image. Its finite conductor is denoted by $\fideal_\omega$.
We say that its infinite type is trivial if its archimedean components are
trivial. Its associated ideal character is
\[
 \chi_\omega(\nideal)=
 \begin{cases}
 \displaystyle\prod_{\p^e\parallel\nideal}
       \omega_\p(\varpi_\p)^e,&(\nideal,\fideal_\omega)=1,\\[4pt]
 0,&(\nideal,\fideal_\omega)>1,
 \end{cases}
\]
where $\varpi_\p$ is a uniformizer of $K_\p$. At an unramified prime this
value is independent of the uniformizer. The zero in this definition
records ramification. A local character itself never vanishes. We use
$L(s,\chi_\omega)$ and $L(s,\omega)$ for the same Hecke $L$-function,
defined for $\Re s>1$ by the absolutely convergent expressions
\[
 L(s,\chi_\omega)=\sum_{\nideal}
       \frac{\chi_\omega(\nideal)}{(\Norm\nideal)^s}
 =\prod_{\p}\left(1-\frac{\chi_\omega(\p)}{(\Norm\p)^s}\right)^{-1}.
\]
Here $K_\p$ is the completion at $\p$. We write $v_\p$ for its
normalized valuation.
See \cite[Section~3.8]{IK} for ideal and Hecke characters.

A narrow ray modulus will always include every real place. We denote its
finite ideal part by $\q$. If $S$ is a finite set of finite primes, the
notation $(\aideal,S)=1$ means that no prime in $S$ divides $\aideal$.

\begin{proposition}\label{prop:positive-local-component}
There are a finite set $S$ containing the dyadic primes, a narrow ray
modulus with finite part $\q$ supported on $S$, and an ideal $\cideal_+$
supported on $S$, with the following properties.
\begin{enumerate}[label=\textup{(\roman*)}]
\item Let $\Om_+$ be the squarefree integral ideals in the identity narrow
ray class modulo $\q$. Each $\aideal\in\Om_+$ determines a primitive
quadratic character $\omega_\aideal$ of trivial infinite type, with
\begin{equation}\label{eq:positive-local-conductor}
 \fideal_\aideal=\cideal_+\aideal,
 \qquad \chi_\aideal(\p)=0\quad(\p\in S).
\end{equation}
Distinct ideals give distinct characters.
\item The set $\Om_+$ has positive density among integral ideals. If
$\mathrm{Cl}_\q^+$ is the narrow ray class group, its hat denotes the character group, and $\mu_K$ is
the ideal M\"obius function, then
\begin{equation}\label{eq:component-fourier}
 \one_{\Om_+}(\aideal)
 =\frac{\mu_K^2(\aideal)\one_{(\aideal,S)=1}}
        {|\mathrm{Cl}_\q^+|}
   \sum_{\xi\in\widehat{\mathrm{Cl}_\q^+}}\xi(\aideal).
\end{equation}
\end{enumerate}
\end{proposition}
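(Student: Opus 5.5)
Choose $S$ to be the dyadic primes together with a few auxiliary primes, and choose the exponents of $\q$ at each $\p\in S$ large enough that every unit of $\OO_{K_\p}$ congruent to $1\pmod{\q\OO_{K_\p}}$ is a square in $K_\p$. By Hensel's lemma it is enough to have $v_\p(\q)>2v_\p(2)$. Take the narrow modulus with all real places included. For $\aideal\in\Om_+$, pick a totally positive generator $\alpha_\aideal\equiv1\pmod\q$. This is possible because $\aideal$ lies in the identity narrow ray class. Then put $\omega_\aideal$ equal to the quadratic character of $K(\sqrt{d\alpha_\aideal})/K$, where $d$ is a fixed totally positive element to be chosen below.

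The character is independent of the choice of generator. Two admissible generators differ by a totally positive unit $u\equiv1\pmod\q$. Such a $u$ is a local square at every $\p\in S$, it is a unit at every other finite place, and it is positive at every real place. Hence $K(\sqrt{u})/K$ is unramified everywhere, including at the infinite places. I expect this to be the main obstacle: such a $u$ need not be a global square, and we need the class $d\alpha_\aideal$ modulo squares to be well defined. Two fixes are available. One is to enlarge $\q$ so that the finite group of totally positive units modulo squares injects into $\prod_{\p\in S}\OO_{K_\p}^\times/(\OO_{K_\p}^\times)^2$; then any $u\equiv1\pmod\q$ is a local square at $S$, hence a global square. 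This works because a unit that is a local square at sufficiently many primes is a global square, by Chebotarev applied to $K(\sqrt{u})$; we add finitely many such primes to $S$ for each nontrivial class. The other fix is simply to accept that the character is unchanged. The first fix is cleaner, and it also gives the distinctness claim in (i).

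For the local analysis at $\p\notin S$, we have $v_\p(d\alpha_\aideal)=v_\p(\aideal)\in\{0,1\}$ because $\aideal$ is squarefree, and $\p$ is odd. So $\p$ is tamely ramified exactly when $\p\mid\aideal$, with conductor exponent $1$; this gives the factor $\aideal$ in $\fideal_\aideal$. At $\p\in S$ we have $\alpha_\aideal\in(K_\p^\times)^2$, so the local extension is $K_\p(\sqrt d)$, which does not depend on $\aideal$. Now choose $d$ so that $K_\p(\sqrt d)/K_\p$ is ramified at every $\p\in S$. This is possible by weak approximation, since ramified quadratic extensions exist at every prime. We may also make $d$ totally positive. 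This makes $\chi_\aideal(\p)=0$ for $\p\in S$, and it gives $\cideal_+=\prod_{\p\in S}\p^{f_\p(d)}$, where $f_\p(d)$ is the local conductor exponent of $K_\p(\sqrt d)$. Since both $d$ and $\alpha_\aideal$ are totally positive, the infinite type is trivial. Distinctness follows because $\fideal_\aideal$ determines $\aideal$.

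For (ii), we have $\aideal\in\Om_+$ if and only if $\aideal$ is squarefree, coprime to $S$ (needed to define the ray class), and trivial in $\mathrm{Cl}_\q^+$. Orthogonality of characters of the finite abelian group $\mathrm{Cl}_\q^+$ then gives \eqref{eq:component-fourier} at once. For positive density, sum \eqref{eq:component-fourier} over $\Norm\aideal\le x$. The trivial character $\xi$ contributes $c\,x$ with $c>0$, because squarefree ideals coprime to $S$ have density $\kappa_K\prod_{\p\in S}(1+\Norm\p^{-1})^{-1}/\zeta_K(2)$. Each nontrivial $\xi$ contributes $o(x)$, since $L(s,\xi\mu^2)=L(s,\xi)/L(2s,\xi^2)$ times finite Euler factors, and $L(s,\xi)$ is holomorphic and nonvanishing on $\Re s=1$.
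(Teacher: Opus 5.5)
Your proposal has a gap in the choice of $d$. You fix $S$ and $\q$ first, then obtain $d$ by weak approximation at $S$ and the real places. Weak approximation controls the square class of $d$ only at those finitely many places. It says nothing about $v_\p(d)$ for $\p\notin S$, and in general $d$ will have odd valuation at some such primes. Your step ``$v_\p(d\alpha_\aideal)=v_\p(\aideal)\in\{0,1\}$ for $\p\notin S$'' therefore fails.

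At a prime $\p\notin S$ with $v_\p(d)$ odd (necessarily nondyadic), $\omega_\aideal$ is ramified when $\p\nmid\aideal$. It is unramified when $\p\mid\aideal$, because the valuation becomes even. So $\fideal_\aideal=\cideal_+\aideal$ fails in both cases. Demanding even valuation of $d$ outside a prescribed $S$ would require $(d)$ to be an $S$-supported ideal times the square of an ideal. That is a narrow class group condition your construction does not address.

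The paper avoids this by reversing the order of choices. It first chooses a totally positive $d$ with $v_\p(d)=1$ at the dyadic and auxiliary primes, using CRT and then adding a large multiple of a rational integer $M$. It then \emph{defines} $S=\{\p:v_\p(d)\text{ odd}\}$, and only afterwards fixes the exponents of $\q$ at every prime of $S$. Odd valuation makes $K(\sqrt d)/K$ ramified at each $\p\in S$ automatically. Outside $S$ one has $v_\p(d\alpha_\aideal)\equiv v_\p(\aideal)\pmod 2$ with odd residue characteristic, which is all the tame analysis needs.

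You can repair your argument the same way. After choosing $d$, adjoin its odd-valuation primes to $S$; they are nondyadic and ramified in $K(\sqrt d)$. Then deepen $\q$ at those primes. The property that totally positive units $\equiv1\pmod\q$ are squares only improves.

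Your second ``fix'' for the unit ambiguity, accepting that the character is unchanged, is invalid. A totally positive non-square unit $u\equiv1\pmod\q$ changes $K(\sqrt{d\alpha_\aideal})$ by the nontrivial everywhere-unramified character of $K(\sqrt u)/K$. So the auxiliary primes (your first fix, which is the paper's) are genuinely needed.

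Your Tauberian treatment of the density in (ii) is fine. The paper instead shifts a smoothed Mellin integral to $\Re s=1/2+\eps$. That route needs only polynomial growth of the $L$-functions and gives a power-saving error rather than $o(x)$.
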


\begin{proof}
Different totally positive generators of a principal ideal can define
different quadratic extensions. We impose congruences that make every
remaining unit ambiguity a square.

Let $U_K^+$ denote the totally positive units. The quotient
$U_K^+/U_K^2$ is finite. For each nontrivial class choose a representative
$u$ and a distinct nondyadic prime $\mathfrak t_u$ at which $u$ is not a
square. Such primes exist by Chebotarev applied to $K(\sqrt u)/K$.
Let $T$ consist of these primes and the dyadic primes.

Choose $\pi_\p\in\p\setminus\p^2$ for each $\p\in T$.
The Chinese remainder theorem gives an integral $d_0$ with
$d_0\equiv\pi_\p\pmod{\p^2}$ for all $\p\in T$.
Choose a positive rational integer
$M\in\prod_{\p\in T}\p^2$. Such an $M$ exists because every nonzero
integral ideal of $\OO_K$ has nonzero intersection with $\mathbb Z$.
For a sufficiently large positive integer $k$, the element $d=d_0+kM$
is totally positive and has valuation one at every prime of $T$. Put
\[
 S=\{\p:v_\p(d)\text{ is odd}\}.
\]
Then $S$ contains $T$. In particular, every dyadic prime belongs to $S$.
The quadratic extension $K(\sqrt d)/K$ is ramified at every prime in $S$.
Outside $S$ the residue characteristic is therefore odd. Moreover,
$v_\p(d)$ is even, so the extension is unramified there. Write
$\psi_d$ for its quadratic character and $\cideal_+$ for its finite
conductor. Thus $\cideal_+$ has support exactly $S$.

For each $\p\in S$ choose $M_\p$ so large that
\begin{equation}\label{eq:deep-local-squares}
 1+\p^{M_\p}\OO_{K,\p}\subset K_\p^{\times2}.
\end{equation}
Set $\q=\prod_{\p\in S}\p^{M_\p}$, increasing the exponents if necessary
so that $\cideal_+\mid\q$. These conditions imply
\begin{equation}\label{eq:ray-units-square}
 \{u\in U_K^+:u\equiv1\pmod\q\}\subset U_K^2.
\end{equation}
Indeed, a nonsquare class has a chosen prime $\mathfrak t_u$ where it is
not a local square. Equation~\eqref{eq:deep-local-squares} excludes that
class from the left side.

For $\aideal\in\Om_+$ choose a generator satisfying
\[
 (\alpha_\aideal)=\aideal,
 \qquad \alpha_\aideal>0\text{ at every real place},
 \qquad \alpha_\aideal\equiv1\pmod\q.
\]
Define $\omega_\aideal$ to be the quadratic character of
$K(\sqrt{d\alpha_\aideal})/K$. Any two permitted generators differ by a
unit in \eqref{eq:ray-units-square}. Hence the character is well defined.
It is nontrivial because $d\alpha_\aideal$ has odd valuation at every
prime in $S$.

At $\p\in S$, the generator $\alpha_\aideal$ is a local square.
The local character therefore agrees with $\psi_d$ and has the same
conductor exponent. Since $\aideal$ is a ray class ideal, it is coprime
to $\q$, hence no prime of $S$ divides $\aideal$. At a prime dividing
$\aideal$, the residue characteristic is therefore odd and
$d\alpha_\aideal$ has odd valuation. The character is tamely ramified
with conductor exponent one. At every remaining finite prime the residue
characteristic is odd and the valuation of $d\alpha_\aideal$ is even.
The corresponding quadratic extension is unramified (possibly split).
Total positivity gives trivial infinite type. This proves
\eqref{eq:positive-local-conductor}. The conductor recovers $\aideal$,
so distinct indices give distinct characters.

Character orthogonality in $\mathrm{Cl}_\q^+$ gives
\eqref{eq:component-fourier}. We verify its density assertion by a
Dirichlet series. For a ray character $\xi$, write $L^S(s,\xi)$ for its
Hecke $L$-function with the Euler factors in $S$ removed. Then
\begin{equation}\label{eq:cell-dirichlet-series}
 \sum_{(\aideal,S)=1}
 \frac{\mu_K^2(\aideal)\xi(\aideal)}{(\Norm\aideal)^s}
 =\prod_{\p\notin S}\left(1+\frac{\xi(\p)}{(\Norm\p)^s}\right)
 =\frac{L^S(s,\xi)}{L^S(2s,\xi^2)}.
\end{equation}
The denominator is nonzero for $\Re s>1/2$ by its absolutely convergent
Euler product. Only the principal $\xi$ contributes a pole at $s=1$.
After division by $|\mathrm{Cl}_\q^+|$, its residue is
\[
 \kappa_{\Om_+}
 =\frac{\Res_{s=1}\zeta_K(s)}
 {|\mathrm{Cl}_\q^+|\zeta_K(2)}
 \prod_{\p\in S}\left(1+\frac1{\Norm\p}\right)^{-1}>0.
\]
For a fixed smooth compactly supported function $W$ on $(0,\infty)$,
Mellin inversion and a shift to $\Re s=1/2+\eps$ now give
\begin{equation}\label{eq:cell-count}
 \sum_{\aideal\in\Om_+}W(\Norm\aideal/X)
 =\kappa_{\Om_+}X\int_0^\infty W(t)\,dt
  +O_{K,\q,W,\eps}(X^{1/2+\eps}).
\end{equation}
Here the fixed Hecke $L$-functions have polynomial growth on vertical
lines, and the Mellin transform of $W$ decays faster than every power.
No hypothesis on zeros is needed for this smoothed count.
Smooth majorants and minorants of $\one_{[1,2]}$ can have integrals
arbitrarily close to one. Applying the last estimate to them gives
$\#\{\aideal\in\Om_+:X<\Norm\aideal\leq2X\}
\sim\kappa_{\Om_+}X$, proving positive density.
\end{proof}

The use of a ramified fixed character is deliberate. For example, positive
fundamental discriminants $d\equiv5\pmod8$ satisfy $\chi_d(2)=-1$.
A fixed congruence class alone would therefore not give nonnegative local
terms in the averages below.

We henceforth fix the component $\Om=\Om_+$ and put
$\cideal_\Om=\cideal_+$. We refer to it as the positive component.
The property we shall use is
\begin{equation}\label{eq:bad-prime-terms-vanish}
 (\kideal,S)>1\quad\Longrightarrow\quad
 \chi_\aideal(\kideal)=0\qquad(\aideal\in\Om).
\end{equation}
The construction gives a fixed twist of the principal-ideal characters
in \cite[Section~2]{GoldmakherLouvel}. We need no extension to a family
indexed by every squarefree ideal.

We use $\kappa_\Om$ for the constant in \eqref{eq:cell-count}.
Under \textup{GRH}$_K$ the sharp count has the same error exponent:
\begin{equation}\label{eq:cell-sharp}
 \#\Om(X)=\kappa_\Om X+O_{K,\Om,\eps}(X^{1/2+\eps}).
\end{equation}
To see this, apply truncated Perron inversion to
\eqref{eq:cell-dirichlet-series}. For every fixed $\delta,\eta>0$,
GRH bounds its numerator by
$O_{K,\q,\delta,\eta}((1+|t|)^\eta)$ uniformly for
$1/2+\delta\leq\Re s\leq2$ and $|t|\geq1$.
See \cite[Section~5.7]{IK}. For the principal character the pole at
$s=1$ is kept as a residue, so this bound is not asserted near that
pole. On the new vertical line $\Re s=1/2+\delta<1$, the compact
part $|t|\leq1$ is bounded separately. The inverse denominator is
bounded throughout the strip by absolute convergence.
The coefficient of $n^{-s}$ is bounded in modulus by the number of ideals
of norm $n$, which is at most $d_{d_K}(n)\ll_{K,\eta}n^\eta$.
Replace $Y$ by a half-integer with the same integer part, and take
the Perron height $T=Y^2$ and the initial line $\Re s=1+1/\log Y$.
This replacement preserves the summatory function, since ideal norms
are integers, and changes the residue main term by only $O_{K,\q}(1)$.
The truncation error is
$O_{K,\q,\eta}(Y^{1+\eta}/T)$, with a logarithm absorbed into $Y^\eta$.
Shifting to $\Re s=1/2+\delta$ contributes
$O(Y^{1/2+\delta}T^\eta\log T+Y^{1+\eta}T^{-1+\eta})$.
For $0<\eps<1/2$, take $\delta=\eps/4$ and $\eta=\eps/16$.
Then $\delta+2\eta<\eps$ and all these errors are
$O_{K,\q,\eps}(Y^{1/2+\eps})$. Larger $\eps$ follow from this range.
The principal pole gives the residue already computed, and subtracting
the counts at $2X$ and $X$ proves \eqref{eq:cell-sharp}.

For $(\nideal,S)=1$ put
\begin{equation}\label{eq:hK}
 h_K(\nideal)=\prod_{\p\mid\nideal}\frac{\Norm\p}{\Norm\p+1}.
\end{equation}
This factor is the density correction for requiring a squarefree index
to be coprime to $\nideal$.

\section{Averages of characters}\label{sec:orthogonality}

Square arguments contribute the main term in the character average.
For nonsquare arguments, reciprocity gives a nonprincipal character in
the family index, to which we can apply GRH. Since the argument ideal
need not be principal, we construct this reciprocal character on a ray
class group. Its conductor bound supplies the required uniformity.

Fix an integer $J_0>2d_K+10$. For a smooth function $w$ supported on
$[X,2X]$, define
\[
 \|w\|_{*,X}=\sum_{j=0}^{J_0}X^j\|w^{(j)}\|_\infty.
\]
This norm allows the weight to vary with terms in an approximate
functional equation.

\begin{proposition}\label{prop:orthogonality}
Assume \textup{GRH}$_K$. Let $w$ be smooth and supported on $[X,2X]$.
Write $\nideal=\nideal_0\nideal_1^2$, where $\nideal_0$ is squarefree,
and assume $(\nideal,S)=1$. For every fixed $0<\eps<1/2$,
\begin{equation}\label{eq:orthogonality}
 \begin{split}
 \sum_{\aideal\in\Om}w(\Norm\aideal)\chi_\aideal(\nideal)
 &=\kappa_\Om h_K(\nideal)\one_{\nideal=\square}
       \int_0^\infty w(t)\,dt\\
 &\quad+O_{K,\Om,\eps}\left(
 \|w\|_{*,X}X^{1/2+\eps}F_\eps(\nideal_0)G_\eps(\nideal_1)\right),
 \end{split}
\end{equation}
where the conductor and coprimality factors may be taken as
\begin{enumerate}[label=\textup{(\roman*)}]
\item $\displaystyle F_\eps(\nideal_0)
 =\exp\{C_{K,\eps}(\log(2+\Norm\nideal_0))^{1-\eps}\}$;
\item $\displaystyle G_\eps(\nideal_1)
 =\prod_{\p\mid\nideal_1}
     \left(1+(\Norm\p)^{-1/2-\eps/2}\right)$.
\end{enumerate}
\end{proposition}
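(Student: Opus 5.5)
The argument splits into the square and nonsquare cases, and in the nonsquare case uses reciprocity to swap the roles of $\aideal$ and $\nideal_0$. Write $\chi_\aideal(\nideal)=\chi_\aideal(\nideal_0)\chi_\aideal(\nideal_1)^2$. Since $\chi_\aideal$ is quadratic and $(\nideal,S)=1$, we have $\chi_\aideal(\nideal_1)^2=\one_{(\aideal,\nideal_1)=1}$.

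\emph{Square case} ($\nideal_0=\OO_K$). The sum becomes $\sum_{\aideal\in\Om,(\aideal,\nideal_1)=1}w(\Norm\aideal)$. I would redo the Dirichlet series computation \eqref{eq:cell-dirichlet-series}, this time also removing the Euler factors at primes dividing $\nideal_1$. Each removed factor changes the residue by $(1+\Norm\p^{-1})^{-1}=\Norm\p/(\Norm\p+1)$, which produces $h_K(\nideal)$. After Mellin inversion, shift to $\Re s=1/2+\eps/2$ as in \eqref{eq:cell-count}. The removed local factors $\prod_{\p\mid\nideal_1}(1+\xi(\p)\Norm\p^{-s})^{-1}$ are then bounded by $G_\eps(\nideal_1)$ up to a constant. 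The integrals over $s$ are controlled by repeated integration by parts in the Mellin transform of $w$, which costs $\|w\|_{*,X}$ times a factor $(1+|t|)^{-J_0}$. That factor is more than enough to absorb the polynomial growth of $L^S(s,\xi)$, so this case needs no GRH.

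\emph{Nonsquare case} ($\nideal_0\neq\OO_K$). The key step is reciprocity. I claim that for fixed squarefree $\nideal_0$ coprime to $S$, the map $\aideal\mapsto\chi_\aideal(\nideal_0)$ on $\Om$ (restricted to $(\aideal,\nideal_0)=1$) is the restriction to the identity class of $\mathrm{Cl}^+_\q$ of a ray class character $\lambda_{\nideal_0}$ modulo $\q\nideal_0\infty$. It is nontrivial, and it is primitive of conductor divisible by $\nideal_0$ up to a factor supported on $S$. To see this, write $\chi_\aideal(\p)$ as the local Hilbert symbol $(d\alpha_\aideal,\pi)_\p$ and apply Hilbert reciprocity $\prod_v(d\alpha_\aideal,\beta)_v=1$. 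Here $\beta$ generates a principal power $\nideal_0^{h}$ with $h$ odd; alternatively one can work directly with the ray class of $\nideal_0$ in a suitable $\mathrm{Cl}$. Because $\alpha_\aideal\equiv1\pmod\q$ and $\alpha_\aideal$ is totally positive, the symbols at $S$ and at the real places depend only on $\nideal_0$ and not on $\aideal$. What remains is $\prod_{\p\mid\nideal_0}(\alpha_\aideal/\p)$, which is a character of $\alpha_\aideal$ modulo $\nideal_0$ and hence of the ray class of $\aideal$. The character is nontrivial because squarefree $\nideal_0\ne\OO_K$ has a prime $\p$ at which some $\alpha\equiv1\pmod\q$ is a nonresidue, by CRT.

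Because $\lambda_{\nideal_0}$ need not be quadratic once it is extended off the subgroup, GRH$_K$ is needed for all finite-order Hecke characters. Expanding $\one_\Om$ by \eqref{eq:component-fourier} gives
\[
 \sum_\aideal w(\Norm\aideal)\mu^2_K(\aideal)\one_{(\aideal,S\nideal_1)=1}\xi\lambda_{\nideal_0}(\aideal),
\]
where every $\xi\lambda_{\nideal_0}$ is nonprincipal. I would then apply Mellin inversion with the series $L^{S'}(s,\xi\lambda)/L^{S'}(2s,(\xi\lambda)^2)$, removing the primes of $\nideal_1$, and shift to $\Re s=1/2+\eps/2$ with no pole. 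Under GRH, $\log|L(s,\psi)|\ll (\log \mathfrak{C})^{1-\eps}$ at distance $\gg\eps$ from the critical line, by the standard bound $\log L\ll(\log\mathfrak C)^{2-2\sigma}/\log_2\mathfrak C$. This yields $F_\eps(\nideal_0)$ uniformly in $t$ after absorbing $\log(2+|t|)$, which is harmless given the decay of the Mellin transform of $w$. The denominator is bounded by absolute convergence, and the $\nideal_1$ factors give $G_\eps$.

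The main obstacle is the reciprocity step: showing that the dependence on $\aideal$ is genuinely a ray class character whose modulus divides $\q\nideal_0$, that it is nontrivial for every squarefree $\nideal_0\neq\OO_K$, and that its analytic conductor is $\ll_{K,\q}\Norm\nideal_0$. The difficulty is that $\nideal_0$ is nonprincipal. I would handle it by choosing $h$ equal to the (odd part of the) class number exponent, or by using the adelic formulation of $\omega_\aideal$ evaluated at an idele representing $\nideal_0$.
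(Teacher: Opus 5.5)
Your architecture matches the paper's. You expand $\one_\Om$ by \eqref{eq:component-fourier} and express the $\aideal$-dependence of $\chi_\aideal(\nideal_0)$ as a character on $H=\ker(\mathrm{Cl}^+_{\q\nideal_0}\to\mathrm{Cl}^+_\q)$. You then extend that character to the whole ray class group, where it need not stay quadratic. Finally you bound $L^T(s,\rho)/L^T(2s,\rho^2)$ just right of $1/2$ using the GRH Littlewood bound, the absolutely convergent denominator, and the removed Euler factors at $\nideal_1$. Two of your choices differ from the paper and are harmless. You treat the square case separately: only the fixed $\xi\in\widehat{\mathrm{Cl}^+_\q}$ occur there, so convexity and the $(1+|t|)^{-J_0}$ decay of $\widehat w$ suffice without GRH. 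You also shift to $\Re s=1/2+\eps/2$ instead of $1/2+\eps$.

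Your treatment of the step you call the main obstacle is flawed, and the difficulty you name is not real.

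\emph{Nonprincipality of $\nideal_0$ never matters.} For $\p\mid\nideal_0$ and $(\aideal,\nideal)=1$, the prime $\p$ is nondyadic and unramified in $K(\sqrt{d\alpha_\aideal})$. So directly $\chi_\aideal(\p)=\left(\frac{d\alpha_\aideal}{\p}\right)=\psi_d(\p)\left(\frac{\alpha_\aideal}{\p}\right)$, with $\psi_d$ the character of $K(\sqrt d)/K$. What must be checked is only that $\aideal\mapsto\prod_{\p\mid\nideal_0}(\alpha_\aideal/\p)$ is well defined on $H$. A change of normalized generator is a square unit by \eqref{eq:ray-units-square}. A change of representative within the $\q\nideal_0$-ray class multiplies by an element $\equiv1$ at each $\p\mid\nideal_0$. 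This is the paper's $\rho_{\nideal_0}$. Your last alternative, evaluating $\omega_\aideal$ at the idele with a uniformizer at each $\p\mid\nideal_0$, amounts to exactly this.

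\emph{The Hilbert-reciprocity route does not give your formula.} Applying $\prod_v(d\alpha_\aideal,\beta)_v=1$ moves the product to the places $\p\mid\aideal$. Up to a constant from $S$, it yields $\prod_{\p\mid\aideal}(\beta/\p)=\psi_\beta(\aideal)$, not $\prod_{\p\mid\nideal_0}(\alpha_\aideal/\p)$.

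\emph{The required $\beta$ need not exist.} An element $\beta$ with $(\beta)=\nideal_0^h$ and $h$ odd exists only if the class of $\nideal_0$ has odd order. In a field of class number $2$, no odd power of a nonprincipal $\nideal_0$ is principal. Taking $h$ to be the odd part of the class-group exponent does not help.

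Where such $\beta$ does exist, the dependence on $\aideal$ is the quadratic character $\psi_\beta$. In general a quadratic extension from $H$ need not exist, which is why, as you correctly remark, the extension can have higher order. Replace your reciprocity paragraph by the direct evaluation above, and the rest of your argument goes through.
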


\begin{proof}
Let $H$ be the kernel of the natural homomorphism
$\mathrm{Cl}_{\q\nideal_0}^+\to\mathrm{Cl}_\q^+$.
Each class in $H$ is represented by a principal ideal $(\alpha)$,
coprime to $\q\nideal_0$, with $\alpha$ totally positive and
$\alpha\equiv1\pmod\q$. Set
\begin{equation}\label{eq:rho-n0-definition}
 \rho_{\nideal_0}((\alpha))
 =\prod_{\p\mid\nideal_0}\left(\frac{\alpha}{\p}\right),
\end{equation}
where the factors are quadratic residue symbols in the residue fields.
The primes involved are nondyadic. Changing the generator multiplies it
by a square, by \eqref{eq:ray-units-square}. Changing the representative
within its $\q\nideal_0$-ray class multiplies it by an element congruent
to one at every prime of $\nideal_0$. Thus \eqref{eq:rho-n0-definition}
is a well-defined character of $H$.

A character of a subgroup of a finite abelian group extends to the whole
group. Choose such an extension to $\mathrm{Cl}_{\q\nideal_0}^+$ and
retain the notation $\rho_{\nideal_0}$. For $\nideal_0=(1)$ choose the
principal extension. The extension has finite order, but need not have
order two. This is why we assume GRH for all finite-order Hecke
characters.

Its finite conductor divides $\q\nideal_0$.  The extension to the narrow ray
class group may carry an archimedean sign type, but there are only finitely
many such types (depending on $K$). They are absorbed in the constants below
and do not affect the finite-conductor divisibility.  We also need ramification
at every $\p\mid\nideal_0$. Choose a totally positive $\alpha$ congruent to one
modulo $\q\prod_{\mathfrak l\mid\nideal_0,\,\mathfrak l\ne\p}\mathfrak l$
and to a nonzero nonsquare modulo $\p$. The Chinese remainder theorem,
followed by addition of a sufficiently large positive rational integer
in the modulus, gives such an $\alpha$.
The class of $(\alpha)$ becomes trivial when $\p$ is removed from the
ray modulus, whereas $\rho_{\nideal_0}((\alpha))=-1$.
Every extension is therefore ramified at $\p$, and
\[
 \nideal_0\mid\fideal_{\rho_{\nideal_0}}\mid\q\nideal_0.
\]

Let $\psi_{\alpha_\aideal}$ denote the character of
$K(\sqrt{\alpha_\aideal})/K$. For $(\aideal,\nideal)=1$ we have
$\omega_\aideal=\psi_d\psi_{\alpha_\aideal}$ and
\begin{equation}\label{eq:component-direct-reciprocity}
 \chi_\aideal(\nideal)
 =\psi_d(\nideal)\rho_{\nideal_0}(\aideal).
\end{equation}
Here $\psi_d(\nideal)$ denotes its ideal value. Since
$(\aideal,\nideal)=1$, every prime $\p\mid\nideal_0$ is unramified in
$K(\sqrt{\alpha_\aideal})/K$.  The unramified quadratic local Artin
character evaluates on a uniformizer as the quadratic residue symbol,
so
\[
 \psi_{\alpha_\aideal}(\p)
   =\left(\frac{\alpha_\aideal}{\p}\right).
\]
This identity is independent of whether one uses arithmetic or geometric
Frobenius, since the value is in $\{\pm1\}$ and hence equals its inverse.
The square part $\nideal_1^2$ contributes one. Hence
\[
 \psi_{\alpha_\aideal}(\nideal)
 =\prod_{\p\mid\nideal_0}
   \left(\frac{\alpha_\aideal}{\p}\right)
 =\rho_{\nideal_0}(\aideal),
\]
where the last equality follows because the class of $\aideal$ in
$\mathrm{Cl}_{\q\nideal_0}^+$ lies in $H$ (it is principal with a totally
positive generator congruent to $1$ modulo $\q$), and the chosen extension
$\rho_{\nideal_0}$ restricts to \eqref{eq:rho-n0-definition} on $H$.
This proves \eqref{eq:component-direct-reciprocity}. If
$(\aideal,\nideal)>1$, the character value is zero by
\eqref{eq:positive-local-conductor}. In the nonsquare case the factor
$\psi_d(\nideal)$ is independent of $\aideal$ and has absolute value one,
so it may be suppressed in the error estimate below. In the square case
it equals one.

Insert \eqref{eq:component-fourier}. For each ray character $\xi$ put
$\rho=\xi\rho_{\nideal_0}$ and let $T$ consist of $S$ and the primes
dividing $\nideal$. The required Dirichlet series is
\[
 D_{\xi,\nideal}(s)
 =\sum_{(\aideal,T)=1}
    \frac{\mu_K^2(\aideal)\rho(\aideal)}{(\Norm\aideal)^s}
 =\frac{L^T(s,\rho)}{L^T(2s,\rho^2)}.
\]
Here $L^T(s,\rho)$ denotes the primitive Hecke $L$-function inducing
$\rho$, with all Euler factors in $T$ omitted. The same convention is
used for $\rho^2$, whose primitive conductor can be smaller.
For $\Re s>1/2$, the Euler product for $1/L^T(2s,\rho^2)$ converges
absolutely and locally uniformly. Thus it is holomorphic there, and the
only possible pole of $D_{\xi,\nideal}$ in this half-plane comes from
a principal numerator at $s=1$. In particular, no zero-free estimate
on the line $\Re(2s)=1$ is needed. If $\rho^2$ is principal, then
\begin{align*}
 L^T(2s,\rho^2)&=\zeta_K(2s)
       \prod_{\p\in T}(1-(\Norm\p)^{-2s}),\\
 \frac1{L^T(2s,\rho^2)}&=O_{K,T}(s-1/2)
\end{align*}
near $s=1/2$. The pole of this denominator therefore gives a zero
of its reciprocal, not an additional residue. The contour below stays
strictly to the right of this point.
If $\nideal_0\ne(1)$, then $\rho$ is ramified at every prime dividing
$\nideal_0$: multiplication by $\xi$, whose conductor is supported on
$S$, cannot remove that ramification. Thus $\rho$ is nonprincipal.
If $\nideal_0=(1)$, then $\rho=\xi$, so only the principal ray character
can contribute a pole at one.

We estimate the same shifted integral in both cases. For a nonprincipal
primitive character inducing $\rho$, the standard Littlewood bound under
GRH in terms of the analytic conductor implies, at $s=1/2+\eps+it$,
\[
 \log |L(s,\rho)|
 \ll_{K,\eps}
 \frac{(\log C(\rho,t))^{1-2\eps}}
      {\log\log(3+C(\rho,t))},
\]
with the harmless convention that the denominator is bounded below by a
positive constant. Since
$C(\rho,t)\ll_{K,\Om}\Norm\fideal_\rho(3+|t|)^{d_K}$ and
$\fideal_\rho\mid\q\nideal_0$, enlarging $C_{K,\eps}$ gives the
coarser bound needed here,
\[
 |L(1/2+\eps+it,\rho)|\ll_{K,\Om,\eps}
 (1+|t|)F_\eps(\nideal_0).
\]
Indeed the conductor part is dominated by
$(\log(2+\Norm\nideal_0))^{1-\eps}$, while the height contribution is
$o_\eps(\log(2+|t|))$ as $|t|\to\infty$. It is therefore at most
$\log(2+|t|)+O_{K,\eps}(1)$, which justifies the displayed linear
factor rather than an unspecified power of the height. If $\rho$ is
principal, its primitive function is the fixed $\zeta_K$: the same bound
on this shifted line follows from the corresponding Littlewood estimate
at large height. The remaining compact part of the line avoids its pole
and is bounded with a constant depending only on $K,\eps$.
Its pole at $s=1$ is handled separately by a residue.
See \cite[Section~5.7]{IK}. The local parameters of these finite-order
Hecke functions have modulus at most one, so the required Ramanujan
bound is automatic.

To keep the dependence on $\nideal_1$ explicit, first omit only the primes
in $S$ and those dividing $\nideal_0$. The inverse denominator is uniformly bounded
by the convergent product $\prod_\p(1+(\Norm\p)^{-1-2\eps})$.
In the numerator, primes dividing $\nideal_0$ are ramified, and the
finitely many primes in $S$ cost a fixed constant. Removing an additional
prime $\p\mid\nideal_1$ not already in $S\cup\{\p:\p\mid\nideal_0\}$
multiplies this squarefree Euler product by
$(1+\rho(\p)(\Norm\p)^{-s})^{-1}$.
For all sufficiently large $P=\Norm\p$, depending only on $\eps$,
\[
 |1+\rho(\p)P^{-s}|^{-1}
 \leq\frac1{1-P^{-1/2-\eps}}
 \leq1+P^{-1/2-\eps/2}.
\]
Only finitely many smaller primes remain. Their product is bounded by a
constant depending on $K$ and $\eps$. Consequently
\[
 |D_{\xi,\nideal}(1/2+\eps+it)|
 \ll_{K,\Om,\eps}(1+|t|)
       F_\eps(\nideal_0)G_\eps(\nideal_1).
\]

Let $\widehat w(s)=\int_0^\infty w(u)u^{s-1}\,du$.
Integration by parts $J_0$ times, after the substitution $u=Xv$, gives
\[
 |\widehat w(\sigma+it)|
 \ll_{J_0}\|w\|_{*,X}X^\sigma(1+|t|)^{-J_0}
 \qquad(1/2\leq\sigma\leq2).
\]
For each fixed $\nideal$ and $\rho$, the standard vertical-strip
growth estimate for the numerator, and the absolutely convergent inverse
denominator, give
$D_{\xi,\nideal}(v+it)=O_{K,\nideal,\eps}((1+|t|)^{d_K+2})$
uniformly for $1/2+\eps\leq v\leq2$ and $|t|\geq1$.
The finitely many removed Euler factors are bounded throughout this
strip. Since $J_0>2d_K+10$, the horizontal integrals tend to zero.
Mellin inversion and a contour shift from $\Re s=2$ to
$\Re s=1/2+\eps$ therefore give the stated error, using the uniform
bound on the new vertical line. The constants used only to let the
horizontal integrals vanish need not be uniform in $\nideal$.
When $\nideal$ is a square, the principal residue from
\eqref{eq:cell-count} is multiplied by
\[
 \prod_{\p\mid\nideal}\left(1+\frac1{\Norm\p}\right)^{-1}
 =h_K(\nideal).
\]
In this case $\psi_d(\nideal)=1$. When $\nideal$ is not a square,
there is no pole. These are exactly the two main terms in
\eqref{eq:orthogonality}.
\end{proof}

The error factors permit very large ideals when their prime divisors are
small. The following form is convenient for the resonators below.

\begin{lemma}\label{lem:FG-uniform}
Fix $B_0>0$ and $0<\eps<1/2$. Suppose
$\nideal=\uideal\videal$ satisfies
\begin{enumerate}[label=\textup{(\roman*)}]
\item $\Norm\uideal\leq X^{B_0}$;
\item every prime divisor of $\videal$ has norm at most
$z\leq(\log X)^{1+\eps/4}$.
\end{enumerate}
Writing $\nideal=\nideal_0\nideal_1^2$ with $\nideal_0$ squarefree,
we have uniformly
\[
 \log\{F_\eps(\nideal_0)G_\eps(\nideal_1)\}
 \ll_{K,B_0,\eps}(\log X)^{1-\eps/2}=o(\log X).
\]
In particular the product is $X^{o(1)}$, with one bound for all such
$\uideal,\videal$, irrespective of the prime multiplicities in $\videal$.
\end{lemma}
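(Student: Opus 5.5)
We bound the two factors separately, splitting each according to whether a prime comes from $\uideal$ or only from $\videal$. Write $\nideal=\nideal_0\nideal_1^2$.

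The primes dividing $\nideal_0$ fall into two groups: those dividing $\uideal$ and those not dividing $\uideal$. A prime of the second group must divide $\videal$, so its norm is at most $z$. Hence
\[
 \Norm\nideal_0\leq\Norm\uideal\cdot\prod_{\Norm\p\leq z}\Norm\p.
\]
The number of prime ideals of norm at most $z$ is $\ll_K z/\log z$, each of norm at most $z$. This gives
\[
 \log\prod_{\Norm\p\leq z}\Norm\p\ll_K z\leq(\log X)^{1+\eps/4}.
\]
Thus $\log(2+\Norm\nideal_0)\ll_{K,B_0}(\log X)^{1+\eps/4}$, and
\[
 \log F_\eps(\nideal_0)\ll_{K,\eps}(\log X)^{(1+\eps/4)(1-\eps)}.
\]
For $0<\eps<1/2$ the exponent satisfies
\[
 (1+\eps/4)(1-\eps)=1-\tfrac34\eps-\tfrac14\eps^2\leq1-\eps/2,
\]
so $\log F_\eps(\nideal_0)\ll_{K,B_0,\eps}(\log X)^{1-\eps/2}$. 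No multiplicities enter here, since $\nideal_0$ is squarefree.

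For $G_\eps(\nideal_1)$ we use
\[
 \log G_\eps(\nideal_1)\leq\sum_{\p\mid\nideal_1}(\Norm\p)^{-1/2-\eps/2},
\]
which depends only on the set of prime divisors of $\nideal_1$, not on their exponents. This is the step that makes the bound uniform in the prime multiplicities of $\videal$. Every such prime either divides $\uideal$ or has norm at most $z$.

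The primes with norm at most $z$ contribute at most
\[
 \sum_{\Norm\p\leq z}(\Norm\p)^{-1/2-\eps/2}\ll_K z^{1/2-\eps/2}\leq(\log X)^{1/2+o(1)}
\]
by partial summation with the prime ideal theorem.

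For the primes dividing $\uideal$, the number of distinct prime divisors satisfies $\omega(\uideal)\ll_K\log\Norm\uideal/\log_2\Norm\uideal$, so there are at most $\ll_{K,B_0}\log X/\log_2X$ of them. Their contribution is maximized by taking them to be the smallest primes. Hence it is at most
\[
 \sum_{\Norm\p\leq y}(\Norm\p)^{-1/2-\eps/2}\ll y^{1/2-\eps/2}\quad\text{with } y\ll\log X,
\]
which is $\ll(\log X)^{1/2}$. Both pieces are $o((\log X)^{1-\eps/2})$, since $1-\eps/2>3/4$.

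Adding the two estimates proves the claim, with constants depending only on $K,B_0,\eps$. In particular the product $F_\eps(\nideal_0)G_\eps(\nideal_1)$ is $X^{o(1)}$.

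The only delicate point is the exponent bookkeeping in the bound for $F_\eps$: the factor $(1+\eps/4)$ allowed in the range of $z$ must still be beaten by the saving $1-\eps$ in $F_\eps$. The computation above shows it is, with room to spare.
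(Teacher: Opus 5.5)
Your proof is correct and follows the paper's argument: for $F_\eps$ you use the same bound $\log\Norm\nideal_0\le B_0\log X+O_K(z)$ and the same exponent check $(1+\eps/4)(1-\eps)\le1-\eps/2$, and for $G_\eps$ you use the same split of the primes dividing $\nideal_1$ into those of norm at most $z$ and those dividing $\uideal$. The only difference is cosmetic: for the primes dividing $\uideal$, the paper splits at $y=\log X$ and counts the at most $B_0\log X/\log y$ primes above $y$, whereas you bound $\omega(\uideal)$ and replace these primes by the smallest ones; both give an $O((\log X)^{1/2})$ contribution.
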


\begin{proof}
The prime ideal theorem gives
$\sum_{\Norm\p\leq z}\log\Norm\p\ll_K z$. Hence
\begin{align*}
 \log\Norm\nideal_0&\leq B_0\log X+O_K(z),\\
 \log F_\eps(\nideal_0)
 &\ll_{K,B_0,\eps}(\log X+z)^{1-\eps}
 \ll_{K,B_0,\eps}(\log X)^{1-\eps/2}.
\end{align*}
The last inequality uses
$(1+\eps/4)(1-\eps)\leq1-\eps/2$.

Put $a=1/2+\eps/2$. The primes coming from $\videal$ contribute at most
\[
 \sum_{\Norm\p\leq z}(\Norm\p)^{-a}
 \ll_{K,\eps}\frac{z^{1-a}}{\log(2+z)}=o(\log X)
\]
to $\log G_\eps(\nideal_1)$.
For the primes dividing $\uideal$, split at $y=\log X$.
The primes of norm at most $y$ contribute
$O_{K,\eps}(y^{1-a}/\log y)$ by partial summation.
There are at most $B_0\log X/\log y$ prime divisors of norm greater than
$y$, since their product divides $\uideal$.
Their contribution is at most
\[
 \frac{B_0\log X}{\log y}\,y^{-a}
 =B_0\frac{(\log X)^{1-a}}{\log_2X}=o(\log X).
\]
Since $(1+\eps/4)(1-a)<1-\eps/2$, these estimates also bound
$\log G_\eps(\nideal_1)$ by $O_{K,B_0,\eps}((\log X)^{1-\eps/2})$.
Combining the bounds proves the asserted uniform estimate.
\end{proof}

\section{The approximate functional equation and bounds under GRH}
\label{sec:analytic}

The resonance argument uses two analytic facts. We need an approximate
functional equation whose weights are nonnegative. We also need a uniform
upper bound to turn a weighted lower bound into a count of characters.
We prove the central and shifted versions together.

Let $r_1$ and $r_2$ be the numbers of real and complex places of $K$.
For $\aideal\in\Om$, put
\[
 \mathcal Q_{\aideal}=|D_K|\Norm\fideal_{\aideal},\qquad
 \gamma_K(s)=\pi^{-r_1s/2}(2\pi)^{-r_2s}
                 \Gamma(s/2)^{r_1}\Gamma(s)^{r_2}.
\]
Here $D_K$ is the discriminant of $K$. The omitted constant in each complex
gamma factor cancels in every ratio below. Since the characters have trivial
infinite type, their completed functions are
\[
 \Lambda(s,\chi_{\aideal})=
 \mathcal Q_{\aideal}^{s/2}\gamma_K(s)L(s,\chi_{\aideal}).
\]
We record explicitly why this normalization has root number $+1$.
Put $L_{\aideal}=K(\sqrt{d\alpha_{\aideal}})$.  The conductor--discriminant
formula for the nontrivial quadratic Artin character gives
\[
 |D_{L_{\aideal}}|=|D_K|^2\Norm\fideal_{\aideal}.
\]
Moreover $d\alpha_{\aideal}$ is totally positive, so every real place of
$K$ splits in $L_{\aideal}$.  Consequently the quotient of the
archimedean factors of $\zeta_{L_{\aideal}}$ and $\zeta_K$ is precisely
$\gamma_K(s)$ (up to the harmless $s$-independent constant allowed by the
standard choice of complex gamma factor), while the quotient of the
conductor factors is $\mathcal Q_{\aideal}^{s/2}$.  Artin factorization
therefore identifies $\Lambda(s,\chi_{\aideal})$, up to such an
$s$-independent positive constant, with the quotient of the completed
Dedekind zeta functions of $L_{\aideal}$ and $K$.  Since both Dedekind
zeta functions satisfy functional equations with sign $+1$, the constant
cancels and
\[
 \Lambda(s,\chi_{\aideal})=\Lambda(1-s,\chi_{\aideal}).
\]
The character is nontrivial, and the completed primitive Hecke
$L$-function is entire.  Thus no pole is crossed in the contour shift
below except the pole of $1/z$ at $z=0$.  These are the standard
functional-equation facts for quadratic Hecke characters. See
\cite[Chapter~5]{IK}.

\begin{lemma}\label{lem:afe}Fix $0<\delta_0<1/4$. For $|\delta|\leq\delta_0$, define
$s_\delta=1/2+\delta$ and
\begin{equation}\label{eq:shifted-kernel}
 V_{K,\delta}(x)=\frac1{2\pi i}\int_{(c)}
 \frac{\gamma_K(s_\delta+z)}{\gamma_K(s_\delta)}x^{-z}\frac{dz}{z},
 \qquad c>0,
\end{equation}
where $\int_{(c)}$ means integration upward along $\Re z=c$.
The value is independent of the particular $c>0$ by contour shifting
inside the half-plane before the first gamma pole. Then the following
assertions hold.
\begin{enumerate}
\item For $0\leq\delta\leq\delta_0$,
\begin{align}
 L(s_\delta,\chi_{\aideal})
 &=\sum_{\kideal}
 \frac{\chi_{\aideal}(\kideal)}{(\Norm\kideal)^{s_\delta}}
 V_{K,\delta}\left(\frac{\Norm\kideal}{\sqrt{\mathcal Q_{\aideal}}}\right)
 \notag\\
 &\quad+Y_{\aideal,\delta}\sum_{\kideal}
 \frac{\chi_{\aideal}(\kideal)}{(\Norm\kideal)^{1-s_\delta}}
 V_{K,-\delta}\left(\frac{\Norm\kideal}{\sqrt{\mathcal Q_{\aideal}}}\right),
 \label{eq:shifted-afe}
\end{align}
where
\begin{equation}\label{eq:dual-factor}
 Y_{\aideal,\delta}=\mathcal Q_{\aideal}^{-\delta}
       \frac{\gamma_K(1/2-\delta)}{\gamma_K(1/2+\delta)}>0.
\end{equation}
\item The kernels take values in $[0,1]$. For
$0<\rho<1/2-\delta_0$, uniformly in $|\delta|\leq\delta_0$,
\begin{equation}\label{eq:shifted-kernel-small}
 V_{K,\delta}(x)=1+O_{K,\delta_0,\rho}(x^\rho),\qquad 0<x\leq1.
\end{equation}
For every integer $j\geq0$ and every $B\geq0$,
\begin{equation}\label{eq:shifted-kernel-decay}
 \left|(x\tfrac d{dx})^jV_{K,\delta}(x)\right|
       \ll_{K,\delta_0,j,B}(1+x)^{-B}.
\end{equation}
\end{enumerate}
In particular, if $V_K=V_{K,0}$, then
\begin{equation}\label{eq:afe}
 L(1/2,\chi_{\aideal})=2\sum_{\kideal}
 \frac{\chi_{\aideal}(\kideal)}{\sqrt{\Norm\kideal}}
 V_K\left(\frac{\Norm\kideal}{\sqrt{\mathcal Q_{\aideal}}}\right).
\end{equation}
\end{lemma}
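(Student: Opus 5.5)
The plan is the standard contour-shift derivation of the approximate functional equation, applied to the completed function $\Lambda(s,\chi_\aideal)$ and using the self-duality $\Lambda(s,\chi_\aideal)=\Lambda(1-s,\chi_\aideal)$ recorded above.

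\textbf{The identity \eqref{eq:shifted-afe}.} Consider
\[
 I=\frac1{2\pi i}\int_{(c)}\frac{\Lambda(s_\delta+z,\chi_\aideal)}{\mathcal Q_\aideal^{s_\delta/2}\gamma_K(s_\delta)}\frac{dz}{z},\qquad c>1.
\]
Expanding the Dirichlet series, which converges absolutely at $\Re(s_\delta+z)>1$, gives $I$ equal to the first sum in \eqref{eq:shifted-afe}. The argument $\Norm\kideal/\sqrt{\mathcal Q_\aideal}$ appears because the ratio $\mathcal Q_\aideal^{z/2}(\Norm\kideal)^{-z}$ equals $(\Norm\kideal/\sqrt{\mathcal Q_\aideal})^{-z}$.

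Next shift the contour to $\Re z=-c$. The completed function is entire and decays rapidly in vertical strips, since the gamma factors decay and $L$ has polynomial growth, so the only pole crossed is $z=0$, with residue $L(s_\delta,\chi_\aideal)$. On the new line, substitute $z\mapsto -z$ and apply the functional equation:
\[
 \Lambda(s_\delta-z)=\Lambda(1-s_\delta+z)=\Lambda(s_{-\delta}+z).
\]
Expanding again produces
\[
 \mathcal Q_\aideal^{(s_{-\delta}-s_\delta)/2}\frac{\gamma_K(s_{-\delta})}{\gamma_K(s_\delta)}\sum_\kideal\frac{\chi_\aideal(\kideal)}{(\Norm\kideal)^{1-s_\delta}}V_{K,-\delta}\!\left(\frac{\Norm\kideal}{\sqrt{\mathcal Q_\aideal}}\right).
\]
The prefactor is exactly $Y_{\aideal,\delta}$. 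It is positive because $\gamma_K$ is positive on $(0,\infty)$ and $1/2\pm\delta>0$. At $\delta=0$ the two sums coincide, which gives \eqref{eq:afe}.

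\textbf{Analytic properties of the kernels.} For $0<x\le1$, shift the contour defining $V_{K,\delta}$ to $\Re z=-\rho$. The ratio $\gamma_K(s_\delta+z)/\gamma_K(s_\delta)$ is holomorphic on $\Re z>-s_\delta$, because the first gamma pole sits at $z=-s_\delta\le-1/2+\delta_0<-\rho$. The only pole crossed is therefore $z=0$, with residue $1$. The remaining integral is bounded by $x^{\rho}$ times a constant, uniformly in $|\delta|\le\delta_0$; this uses Stirling decay of the gamma ratio together with compactness in $\delta$. This proves \eqref{eq:shifted-kernel-small}.

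For \eqref{eq:shifted-kernel-decay}, note that $(x\,d/dx)^j$ inserts a factor $(-z)^j$ in the integrand. Shifting to $\Re z=B$ gives the bound $x^{-B}$ for $x\ge1$, and $\Re z=c$ small handles $x\le1$.

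\textbf{The range $[0,1]$: the main obstacle.} The kernels do not obviously take values in $[0,1]$, and I expect this to be the main obstacle. The plan is to write $V_{K,\delta}(x)=\int_x^\infty k(y)\,dy/y$, where $k$ is the inverse Mellin transform of $\gamma_K(s_\delta+z)/\gamma_K(s_\delta)$. Since $\gamma_K$ is a product of powers of $\Gamma(s/2)$ and $\Gamma(s)$ times exponentials, this ratio is the Mellin transform of a positive function: for each gamma factor, $\Gamma(a+z)/\Gamma(a)$ is the Mellin transform in $y$ of the probability density $y^{a}e^{-y}/\Gamma(a)$, with $a>0$, after the obvious rescalings by powers of $\pi$. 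The full ratio is then the Mellin transform of a multiplicative convolution of such densities, which is again a positive density of total mass $1$, the mass being the value at $z=0$. Hence $k\ge0$ with $\int_0^\infty k(y)\,dy/y=1$, and therefore $V_{K,\delta}(x)$ is the tail mass, which lies in $[0,1]$ and decreases from $1$ to $0$.

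The points to check here are the convergence of the Mellin inversion, which follows from Stirling decay, and that the choice of $c>0$ is immaterial, which holds because no poles lie in $\Re z>-s_\delta$.
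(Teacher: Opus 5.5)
Your argument follows the paper's proof closely. The identity comes from the same contour shift of $\Lambda(s_\delta+z,\chi_\aideal)/z$, with the self-dual functional equation producing $Y_{\aideal,\delta}$. Positivity comes from the same probabilistic representation: $\gamma_K(s+z)/\gamma_K(s)=\mathbf E\,T_s^{z}$ for a product $T_s$ of independent gamma variables, rescaled and, at the real places, square-rooted (the argument there is $s/2+z/2$). This gives $V_{K,\delta}(x)=\mathbf P(T_{s_\delta}>x)\in[0,1]$. For the expansion near $x=0$ you shift to $\Re z=-\rho$ and pick up the residue $1$. The paper instead uses Markov's inequality, $1-V_{K,\delta}(x)\le x^\rho\,\mathbf E\,T_{s_\delta}^{-\rho}$. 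The two are equivalent.

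One step fails as written. For \eqref{eq:shifted-kernel-decay} on $0<x\le1$ you propose keeping the line at a small $\Re z=c>0$. There $|x^{-z}|=x^{-c}$, which is unbounded as $x\to0^+$, so you do not get a bound uniform on $(0,1]$. The fix is the paper's. For $j\ge1$ the factor $(-z)^j$ cancels the pole of $1/z$ at $z=0$. You can therefore move the line to $\Re z=-\rho$ without crossing any pole, since the first gamma pole is at $-s_\delta<-\rho$, and obtain $O(x^\rho)$. For $j=0$ the bound follows from $0\le V_{K,\delta}\le1$, or from your expansion $V_{K,\delta}(x)=1+O(x^\rho)$. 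With that correction your proof is complete.
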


\begin{proof}
The contour argument is the usual approximate functional equation
\cite[Section~5.2]{IK}. The central rational-field instance appears in
\cite[Lemmas~2.1--2.2, pp.~455--456]{Sound2000}.
We give the calculation because the positivity and uniformity of the shifted
weights will be used later.

Choose $c>1/2+\delta_0$ and set
\[
 I_\delta=\frac1{2\pi i}\int_{(c)}
 \frac{\Lambda(s_\delta+z,\chi_{\aideal})}
 {\mathcal Q_{\aideal}^{s_\delta/2}\gamma_K(s_\delta)}\frac{dz}{z}.
\]
Move the line to $\Re z=-c$. The gamma factors decay exponentially,
whereas the Hecke function has polynomial growth in a fixed vertical strip.
Thus the horizontal integrals vanish. Entireness leaves only the residue
at $z=0$. In the remaining integral substitute $z=-w$ and use the functional
equation. The reversed orientation gives
\begin{equation}\label{eq:afe-contour-shift}
 I_\delta=L(s_\delta,\chi_{\aideal})-Y_{\aideal,\delta}I_{-\delta}.
\end{equation}
Both Dirichlet series converge absolutely on the right-hand lines.
The dual Dirichlet series normally contains the contragredient character.
Here $\chi_{\aideal}$ is real quadratic, so
$\overline{\chi_{\aideal}}=\chi_{\aideal}$.  Expanding the two integrals
therefore gives \eqref{eq:shifted-afe}.

To see why the weights are positive, let $s\in[1/2-\delta_0,1/2+\delta_0]$.
Take independent random variables $U_j$ and $Z_j$ with respective densities
$t^{s/2-1}e^{-t}/\Gamma(s/2)$ and $t^{s-1}e^{-t}/\Gamma(s)$ on $t>0$.
Put
\[
 T_s=\prod_{j=1}^{r_1}(U_j/\pi)^{1/2}
            \prod_{j=1}^{r_2}(Z_j/(2\pi)).
\]
The gamma integral gives
$\mathbf E T_s^z=\gamma_K(s+z)/\gamma_K(s)$.
Mellin inversion, or Perron's formula applied inside this expectation, gives
\begin{equation}\label{eq:kernel-probability}
 V_{K,\delta}(x)=\mathbf P(T_{s_\delta}>x).
\end{equation}
The distributions are continuous. Consequently the identity holds for every
$x>0$, and $0\leq V_{K,\delta}(x)\leq1$.
Moreover,
\[
 1-V_{K,\delta}(x)
 \leq x^\rho\mathbf E T_{s_\delta}^{-\rho}\ll_{K,\delta_0,\rho}x^\rho,
\]
since $\rho<1/2-\delta_0$ keeps all gamma arguments positive.

For $x\geq1$, differentiate the Mellin integral and move its line to
$\Re z=B+1$. Stirling's formula proves \eqref{eq:shifted-kernel-decay}.
For $0<x\leq1$ and $j\geq1$, the factor $(-z)^j$ removes the pole at zero.
Move the line to $\Re z=-\rho$. No gamma pole is crossed.
The result is $O(x^\rho)$. For $j=0$, the probability bound suffices.
All constants are uniform on the stated interval of $\delta$.
\end{proof}

For the abundance argument, we need an individual bound with exponent
$o(\log X)$. The following estimate suffices.

\begin{lemma}\label{lem:central-upper}
Assume \textup{GRH}$_K$. For every fixed $A_0\geq0$, uniformly in
$\aideal\in\Om(X)$ and $0\leq\delta\leq A_0/\log_2X$,
\[
 |L(1/2+\delta,\chi_{\aideal})|
 \leq\exp\left(C_{K,\Om}\frac{\log X}{\log_2X}\right)=X^{o(1)}.
\]
\end{lemma}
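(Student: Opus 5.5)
We derive the bound from the standard conditional upper bound for $\log|L|$ near the critical line, in the explicit-formula form of Soundararajan and Chandee, transferred to Hecke $L$-functions over $K$. Under \textup{GRH}$_K$, for $\sigma\geq1/2$ and $2\leq x\leq\mathcal Q_{\aideal}^{O(1)}$, we use the inequality
\[
 \log|L(\sigma,\chi_{\aideal})|
 \leq\Re\sum_{\Norm\p^k\leq x}
 \frac{\chi_{\aideal}(\p)^k\log(x/\Norm\p^k)}
      {k(\Norm\p)^{k(\sigma+1/\log x)}\log x}
 +\frac{(1+\lambda_0)}{2}\frac{\log\mathcal Q_{\aideal}}{\log x}
 +O_K\Bigl(\frac1{\log x}\Bigr).
\]
Here $\lambda_0$ is an absolute constant. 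The argument starts from the Hadamard product of the entire function $\Lambda(s,\chi_{\aideal})$ and the functional equation recorded before Lemma~\ref{lem:afe}. Since every zero has real part $1/2$, the sum over zeros of $\Re(s-\rho)^{-1}$ is nonnegative for $\Re s\geq1/2$, so $\log|\Lambda|$ is nondecreasing in $\sigma$ to the right of $1/2$. This monotonicity combines with the Selberg-type identity for $-L'/L$ against the weight $\log(x/\Norm\kideal)$. The gamma factors are fixed, depending only on $K$, so they contribute $O_K(1)$ at real $\sigma\in[1/2,1]$.

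A cruder route also suffices, since we only need the exponent $O(\log X/\log_2X)$ and not the sharp constant. One can instead use the Littlewood bound invoked in the proof of Proposition~\ref{prop:orthogonality}, namely $\log|L(1/2+it,\rho)|\ll(\log C)/\log\log C$, now applied on the critical line itself. Its standard proof gives exactly this at $\Re s=1/2$ with analytic conductor $C\asymp_{K,\Om}X$. Then we need the same bound for $1/2\leq\sigma\leq1/2+A_0/\log_2X$. We apply monotonicity of $\log|\Lambda(\sigma,\chi_{\aideal})|$ in $\sigma$ in reverse: it gives an upper bound at $\sigma$ from an upper bound at a point further right. So we compare with a point such as $\sigma_1=1/2+1/\log_2X$ or beyond, or simply run the explicit-formula inequality directly at $\sigma$, which handles all $\sigma\geq1/2$ uniformly.

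The concrete steps are as follows. First, take $x=\log X$ in the displayed inequality. The conductor term is then $O(\log X/\log_2X)$, since $\log\mathcal Q_{\aideal}=\log X+O_{K,\Om}(1)$ for $\aideal\in\Om(X)$. Second, bound the prime sum trivially using $|\chi_{\aideal}(\p)|\leq1$ and $\sigma\geq1/2$. The terms with $k\geq3$ are $O_K(1)$. The terms with $k=2$ give $\sum_{\Norm\p\leq\sqrt x}(\Norm\p)^{-1}\ll\log_2x$. The terms with $k=1$ give $\sum_{\Norm\p\leq x}(\Norm\p)^{-1/2}\ll_K x^{1/2}/\log x$ by the prime ideal theorem. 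With $x=\log X$ all of these are $o(\log X/\log_2X)$. Third, check that every constant depends only on $K$, $\Om$ and $A_0$. The upper bound on $\delta$ enters only through the fixed range $\sigma\in[1/2,1]$, so uniformity in $\aideal$ and $\delta$ follows.

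The main obstacle is justifying the explicit-formula inequality for general Hecke $L$-functions with $x$ as small as $\log X$. We must handle the archimedean contribution $\Re\gamma_K'/\gamma_K$ and the possible real zero contributions: there are none off the line under GRH, and $\Lambda$ is entire since $\chi_{\aideal}$ is nontrivial. We also need the positivity of $\sum_\rho\Re\,(\sigma-\rho)^{-1}$ after adding the $(\sigma-1/2)$-shifted terms. We would follow Chandee's lemma closely, replacing $\log q$ by $\log\mathcal Q_{\aideal}$ and Dirichlet primes by prime ideals. Since only an $O(\log X/\log_2X)$ exponent is needed, any loss of a constant factor there is harmless.
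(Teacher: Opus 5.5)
Your proposal is correct and follows essentially the paper's route: the paper verifies Soundararajan's logarithmic majorant for these Hecke $L$-functions at real $\sigma\in[1/2,3/4]$ (Hadamard factorization, a shift to $\sigma+1/\log x$ with the sign mechanism killing the zero sum, and the Selberg identity with kernel $x^w/w^2$), and then bounds the prime sum trivially, exactly as in your concrete steps. The differences are cosmetic: the paper takes $x=(\log X)^2$ rather than $\log X$, and it uses $\lambda=1$ throughout. Since the exponent shift and the conductor coefficient come from the same $\lambda$, your shift $1/\log x$ pairs with coefficient $(1+1)/2=1$, not $(1+\lambda_0)/2$; this is harmless for an $O(\log X/\log_2X)$ bound.
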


\begin{proof}
We use the logarithmic majorant of Soundararajan
\cite[Proposition, p.~984, and Section~2]{SoundMoments}.
The following calculation verifies it for our Hecke functions and shifts.
Write $L(s)=L(s,\chi_{\aideal})$ and $\mathcal Q=\mathcal Q_{\aideal}$.
Under GRH the nontrivial zeros are $\rho=1/2+i\gamma$.
For real $u>1/2$, let
\[
 F(u)=\sum_\rho\frac{u-1/2}{(u-1/2)^2+\gamma^2}\geq0,
\]
where zeros are counted with multiplicity. Hadamard factorization gives
\begin{equation}\label{eq:hecke-hadamard-logder}
 -\Re\frac{L'}L(u)=\tfrac12\log\mathcal Q+O_K(1)-F(u)
 \qquad(1/2<u\leq1).
\end{equation}
Here the gamma logarithmic derivatives are bounded on the indicated real
interval, and the nonprincipal character contributes no pole.

Let $x\geq e^4$, $\Delta=1/\log x$, and $\sigma_0=\sigma+\Delta$,
where $1/2\leq\sigma\leq3/4$. For $a_0=\sigma_0-1/2\geq\Delta$, each zero
contributes
\[
 \int_\sigma^{\sigma_0}\frac{u-1/2}{(u-1/2)^2+\gamma^2}\,du
 =\frac12\log\frac{a_0^2+\gamma^2}{(a_0-\Delta)^2+\gamma^2}
 \geq\frac\Delta2\frac{a_0}{a_0^2+\gamma^2}.
\]
Indeed, $a_0^2-(a_0-\Delta)^2=\Delta(2a_0-\Delta)\geq\Delta a_0$,
and $\log(1+y)\geq y/(1+y)$. At a central zero the integral is
infinite and the eventual upper bound for $|L(1/2)|$ is immediate.
Otherwise all endpoint assertions follow by a limit from the right.
Integration of \eqref{eq:hecke-hadamard-logder} therefore yields
\begin{equation}\label{eq:hecke-shift-to-right}
 \log|L(\sigma)|-\log|L(\sigma_0)|
 \leq\frac\Delta2\log\mathcal Q-\frac\Delta2F(\sigma_0)+O_K(\Delta).
\end{equation}

Let $\Lambda_K(\nideal)$ be the ideal von Mangoldt function: it is
$\log\Norm\p$ if $\nideal=\p^j$ with $j\geq1$, and zero otherwise.
For real $s>1/2$, apply Perron's formula to $-L'/L$ with kernel
$x^w/w^2$, initially on $\Re w>\max\{0,1-s\}$. The residue at
$w=0$ is $-(L'/L)(s)\log x-(L'/L)'(s)$. A zero $\alpha$
of multiplicity $m_\alpha$ contributes
$-m_\alpha x^{\alpha-s}/(\alpha-s)^2$. Shifting left along
contours avoiding the zeros, then dividing by $\log x$ and rearranging,
gives
\begin{align}
 -\frac{L'}L(s)
 &=\sum_{\Norm\nideal\leq x}
 \frac{\Lambda_K(\nideal)\chi_{\aideal}(\nideal)}{(\Norm\nideal)^s}
       \frac{\log(x/\Norm\nideal)}{\log x}
 +\frac{(L'/L)'(s)}{\log x}\notag\\
 &\quad+\frac1{\log x}\sum_\rho\frac{x^{\rho-s}}{(\rho-s)^2}
             +\mathcal E_K(s,x).
 \label{eq:hecke-selberg-identity}
\end{align}
There is no residue from a pole of $L$, since the primitive character
is nonprincipal. The gamma factors specified above give the trivial
zero at $-m$, $m\geq0$, multiplicity
$b_m=r_2+r_1\one_{2\mid m}$. This includes the zero at $0$:
the functional equation and $L(1)\ne0$ show that the completed
function is nonzero there, so cancellation of the gamma pole is exact.
Consequently
\begin{align*}
 \mathcal E_K(u,x)&=\frac1{\log x}
       \sum_{m\geq0}b_m\frac{x^{-u-m}}{(u+m)^2},\\
 \int_{\sigma_0}^{\infty}|\mathcal E_K(u,x)|\,du
 &\ll_K\frac{x^{-\sigma_0}}{(\log x)^2}.
\end{align*}
Here $\sigma_0\geq1/2$ and $x\geq e^4$, so the bound is uniform
even as $\sigma\downarrow1/2$. The residue formula follows first
with finite contours. The usual logarithmic-derivative bounds away
from zeros justify their limit, as in \cite[Section~2]{SoundMoments}.

Take real parts and integrate from $\sigma_0$ to infinity.
The derivative term integrates to $-\Re(L'/L)(\sigma_0)/\log x$,
since $L'/L(u)\to0$ as $u\to\infty$. The nontrivial zero sum
can be integrated termwise: $\sum_\rho|\sigma_0-\rho|^{-2}<\infty$
for $\sigma_0>1/2$, and $|u-\rho|\geq|\sigma_0-\rho|$ for
$u\geq\sigma_0$. Its absolute contribution is at most
\[
 \frac1{\log x}\sum_\rho\int_{\sigma_0}^\infty
 \frac{x^{1/2-u}}{|\sigma_0-\rho|^2}\,du
 =\frac{x^{1/2-\sigma_0}}{(\sigma_0-1/2)(\log x)^2}F(\sigma_0).
\]
Using \eqref{eq:hecke-hadamard-logder} only at $\sigma_0\leq1$
(not over the entire unbounded interval of integration), we obtain
\begin{align}
 \log|L(\sigma_0)|
 &\leq\Re\sum_{\Norm\nideal\leq x}
 \frac{\Lambda_K(\nideal)\chi_{\aideal}(\nideal)}
 {(\Norm\nideal)^{\sigma_0}\log\Norm\nideal}
 \frac{\log(x/\Norm\nideal)}{\log x}
 +\frac{\log\mathcal Q}{2\log x}\notag\\
 &\quad+\left\{
 \frac{x^{1/2-\sigma_0}}{(\sigma_0-1/2)(\log x)^2}
 -\frac1{\log x}\right\}F(\sigma_0)+O_K(1).
 \label{eq:hecke-sound-at-sigma0}
\end{align}
Add \eqref{eq:hecke-shift-to-right}.  Writing
$y=(\sigma_0-1/2)\log x\geq1$, the positive part of the resulting
$F(\sigma_0)$-coefficient is
$e^{-y}/(y\log x)\leq e^{-1}/\log x$, while the two negative
contributions total $-3/(2\log x)$.  Hence the coefficient of
$F(\sigma_0)$ is at most $(e^{-1}-3/2)/\log x<0$.  This is exactly
the $\lambda=1$ instance of the sign mechanism in Soundararajan's
majorant, now with a shift starting at any $\sigma\in[1/2,3/4]$.
The conductor contributions are
$\Delta\log\mathcal Q/2+\log\mathcal Q/(2\log x)
=\log\mathcal Q/\log x$.
Removing the nonpositive zero term gives
\begin{align}
 \log|L(\sigma)|
 &\leq\Re\sum_{\Norm\nideal\leq x}
 \frac{\Lambda_K(\nideal)\chi_{\aideal}(\nideal)}
 {(\Norm\nideal)^{\sigma+1/\log x}\log\Norm\nideal}
 \frac{\log(x/\Norm\nideal)}{\log x}
 +\frac{\log\mathcal Q}{\log x}+O_K(1).
 \label{eq:grh-log-majorant-precise}
\end{align}
At a central zero the claimed upper bound is immediate. Elsewhere the
endpoint $\sigma=1/2$ follows by a limit.

Take $x=(\log X)^2$. The prime ideal theorem and partial summation give
\[
 \sum_{(\Norm\p)^j\leq x}\frac1{j(\Norm\p)^{j/2}}
 \ll_K\frac{\sqrt x}{\log x}+\log_2x
 \ll_K\frac{\log X}{\log_2X}.
\]
Since $\mathcal Q\asymp_{K,\Om}X$, the conductor term has the same bound.
For every fixed $A_0$, once $\log_2X\geq4A_0$ the required
interval $\sigma=1/2+A/\log_2X$, $0\leq A\leq A_0$, lies in
$[1/2,3/4]$. Every estimate above is uniform on that entire interval.
Only the threshold for $X$ depends on $A_0$, while the displayed
constant depends on $K,\Om$. Exponentiation finishes the proof.
\end{proof}

For a fixed point to the right of $1/2$, the ordinary short Euler product
is more convenient than the approximate functional equation.

\begin{lemma}\label{lem:short-log}
Assume \textup{GRH}$_K$ and fix $1/2<\sigma\leq1$.
For $y=(\log X)^B$ with $B$ sufficiently large in terms of $\sigma$,
the following estimates hold uniformly on $\Om(X)$.
\begin{enumerate}
\item If $\sigma<1$, then
\begin{equation}\label{eq:short-log}
 \log L(\sigma,\chi_{\aideal})=
 \sum_{\Norm\p\leq y}\frac{\chi_{\aideal}(\p)}{(\Norm\p)^\sigma}
                      +O_{K,\sigma}(1).
\end{equation}
\item At $\sigma=1$,
\begin{equation}\label{eq:short-euler-one}
 L(1,\chi_{\aideal})=
 \prod_{\Norm\p\leq y}\left(1-\frac{\chi_{\aideal}(\p)}{\Norm\p}\right)^{-1}
                 \{1+O_{K,\Om}((\log X)^{-10})\}.
\end{equation}
\end{enumerate}
\end{lemma}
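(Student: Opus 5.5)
We will prove both parts from the explicit formula with smoothed prime sums, in the style of Granville--Soundararajan and Aistleitner--Mahatab--Munsch--Peyrot, adapted to Hecke $L$-functions over $K$. Fix $\sigma\in(1/2,1]$, $\aideal\in\Om(X)$, and write $L(s)=L(s,\chi_{\aideal})$, $\mathcal Q=\mathcal Q_{\aideal}\asymp X$.

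\emph{Step 1: a truncated formula for $\log L$.} For $y\ge 2$ and real $s=\sigma$, we apply Perron's formula to $\log L(\sigma+w)=\sum_{\nideal}\Lambda_K(\nideal)\chi_{\aideal}(\nideal)/((\Norm\nideal)^{\sigma+w}\log\Norm\nideal)$. We use a smooth kernel $\widehat\phi(w)y^w$, for instance $(y^w-y^{w/2})/(w^2\log(y^{1/2}))$ or a Mellin transform with rapid decay. The contour is shifted from $\Re w=2$ to $\Re w=\tfrac12-\sigma+\eps'$, where $\eps'<(\sigma-\tfrac12)/2$. Because $\chi_{\aideal}$ is nonprincipal, no pole of $L$ is crossed, and under GRH$_K$ no zero is crossed, since $\log L(\sigma+w)$ is holomorphic for $\Re(\sigma+w)>1/2$. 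The residue at $w=0$ gives $\log L(\sigma)$. The main term is a smoothed sum over ideals $\nideal$ with $\Norm\nideal\le y$. On the shifted line we use the GRH bound $\log|L(1/2+\eps'+it)|\ll (\log(\mathcal Q(3+|t|)))^{1-2\eps'}/\log_2(\cdots)$, the Littlewood estimate already quoted in Proposition~\ref{prop:orthogonality}. Together with the decay of the kernel, this shows the integral is
\[
O\bigl(y^{1/2-\sigma+\eps'}(\log X)^{1-2\eps'}\bigr).
\]
Taking $y=(\log X)^B$ with $B(\sigma-\tfrac12-\eps')>1$ makes this $O((\log X)^{-10})$ once $B$ is large. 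The argument $\arg L$ is controlled in the same way, since we work with the complex logarithm; at real $\sigma$, $L(\sigma)>0$ under GRH because there are no real zeros right of $1/2$ and $L(s)\to1$ as $s\to+\infty$.

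\emph{Step 2: removing the smoothing and prime powers.} Write the smoothed sum as the sharp sum over primes $\Norm\p\le y$ plus errors. Prime powers $\p^j$ with $j\ge3$ contribute $O(1)$ for $\sigma>1/2$, indeed $o(1)$ after truncation. Squares $\p^2$ contribute $\sum_\p \chi_{\aideal}(\p)^2/(2(\Norm\p)^{2\sigma})$, which is $O_{K,\sigma}(1)$ for $\sigma>1/2$.

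The smoothing transition range $y^{1/2}<\Norm\p\le y$, or the tail beyond $y$ if a kernel with a tail is used, is the delicate point.
\begin{enumerate}
\item For $\sigma<1$, part (i) only needs an $O(1)$ error. The cleanest route is to compare two truncations: apply Step 1 at $y$ and at $y^2$, so that the transition-range prime sum is bounded by an explicit-formula difference. Alternatively, we can use the GRH prime ideal theorem for $\chi_{\aideal}$,
\[
\sum_{\Norm\p\le t}\chi_{\aideal}(\p)\log\Norm\p\ll t^{1/2}\log^2(t\mathcal Q),
\]
with partial summation. This gives a transition contribution of
\[
\ll y^{1/2-\sigma}(\log X)^2=o(1)
\]
for large $B$. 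This also gives the sharp cutoff at $\Norm\p\le y$, proving \eqref{eq:short-log}.
\item For $\sigma=1$, part (ii), we keep the full Euler factors rather than only the linear terms. We write $\log L(1)$ as $\sum_{\Norm\p\le y}-\log(1-\chi_{\aideal}(\p)/\Norm\p)$ plus the tail over $\p^j$ with $\Norm\p>y$. Step 1 with the sharp-cutoff comparison shows this tail is $O(y^{-1/2}(\log X)^2)$, which is at most $(\log X)^{-10}$ once $B\ge 25$. The prime-power part of the tail is trivially $O(y^{-1})$. Exponentiating gives \eqref{eq:short-euler-one}.
\end{enumerate}

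The main obstacle is the uniform GRH control of the prime sum in the range just above $y$, which is where the constant $B$ is fixed. This requires the explicit formula for $\psi(t,\chi_{\aideal})$ with conductor dependence $\log\mathcal Q\ll\log X$. We will derive it by the same contour shift as in \eqref{eq:hecke-selberg-identity}, using the zero-counting bound $N(T)\ll T\log(\mathcal Q T)$. Primes in $S$ contribute nothing by \eqref{eq:bad-prime-terms-vanish}, so no separate local treatment is needed. All constants depend only on $K$, $\Om$ and $\sigma$, which gives the uniformity over $\Om(X)$.
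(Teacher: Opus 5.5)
Your argument is correct, but it is more roundabout than the paper's. The paper uses one GRH input and no contour shift. That input is the bound $\Psi_\chi(t)=\sum_{\Norm\nideal\le t}\Lambda_K(\nideal)\chi(\nideal)\ll_K t^{1/2}\log^2(\mathcal Q(t+2)^{d_K})$. Partial summation then shows three things:
\begin{enumerate}
\item the Dirichlet series for $\log L(s,\chi_{\aideal})$ converges locally uniformly on $\Re s>1/2$;
\item it therefore equals $\log L$ there, by analytic continuation from $\Re s>1$;
\item its tail over $\Norm\nideal>y$ is $\ll y^{1/2-\sigma}\log^2(\mathcal Q(y+2)^{d_K})/\log y$.
\end{enumerate}
Prime powers are then handled as in your Step~2: they are bounded in (i), and in (ii) they are completed over $\Norm\p\le y$ at negligible cost.

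Your Step~1 is valid: the smoothed Perron kernel, the shift to $\Re(\sigma+w)=1/2+\eps'$, and the GRH Littlewood bound for the complex logarithm all work. But you then invoke the same $\Psi_\chi$ bound to pass from the smoothed sum to the sharp cutoff $\Norm\p\le y$, so Step~1 adds nothing. Running that partial summation over $(y,\infty)$ instead of over the transition range proves the lemma outright. It also supplies the convergence of the Euler product at $s=1$, which your prime-by-prime decomposition in (ii) tacitly uses. The smoothed route would pay off only if a smoothed prime sum were acceptable, since then it needs only the Littlewood bound and no pointwise bound for $\Psi_\chi$.

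Your first suggestion for the transition range, comparing the formulas at $y$ and $y^2$, does not by itself isolate the sharp sum. Rely on the partial-summation alternative instead.

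Two exponents need correcting. With the kernel $(y^w-y^{w/2})/(w^2\log y^{1/2})$, the $y^{w/2}$ term makes the shifted integral $O(y^{(1/2-\sigma+\eps')/2}(\log X)^{1-2\eps'})$. The transition range $(y^{1/2},y]$ contributes $O(y^{(1/2-\sigma)/2}(\log X)^2)$. Consequently $B\ge25$ does not suffice in (ii), though something like $B\ge50$ does. This is harmless, because the statement only asks for $B$ sufficiently large.
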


\begin{proof}
GRH and the explicit formula for a nonprincipal finite-order Hecke character
give
\[
 \Psi_\chi(t):=\sum_{\Norm\nideal\leq t}
          \Lambda_K(\nideal)\chi(\nideal)
 \ll_K t^{1/2}\log^2\bigl(\mathcal Q(t+2)^{d_K}\bigr);
\]
see \cite[Chapter~5]{IK}. Partial summation therefore makes the series
for $\log L(\sigma,\chi)$ converge for $\sigma>1/2$, and bounds its tail by
\begin{align*}
 \left|\int_y^\infty\frac{t^{-\sigma}}{\log t}\,d\Psi_\chi(t)\right|
 &\ll_{K,\sigma}
 y^{1/2-\sigma}\frac{\log^2(\mathcal Q(y+2)^{d_K})}{\log y}.
\end{align*}
The identity with $\log L$ first holds for $\sigma>1$ and extends throughout
$\sigma>1/2$ by analytic continuation. Its branch is real on the real axis:
$L$ is positive for $\sigma>1$ and has no zeros for $\sigma>1/2$ under GRH.
Choosing $B$ large makes this tail $O((\log X)^{-10})$.
The terms with prime-power exponent at least two have bounded total
absolute value when $\sigma>1/2$, proving \eqref{eq:short-log}.

At $\sigma=1$ retain these powers. Completing the prime-power sums for
$\Norm\p\leq y$ changes the logarithm by $O_K(y^{-1/3})$.
Indeed primes with norm exceeding $\sqrt y$ contribute
$O_K(\sum_{\Norm\p>\sqrt y}(\Norm\p)^{-2})=O_K(y^{-1/2})$.
For primes of norm at most $\sqrt y$, each geometric tail is at most $2/y$,
and there are $O_K(\sqrt y)$ such primes. Enlarging $B$ once more and
exponentiating proves \eqref{eq:short-euler-one}.
\end{proof}

\section{G\'al sums over squarefree ideals}\label{sec:gal}

The resonator will contain many ideals, but its prime divisors must remain
small. We also need control of the products of four resonator ideals.
We obtain both properties by choosing only a few primes from each of
several disjoint blocks.

For integral ideals $\mideal,\nideal$, write $(\mideal,\nideal)$ for their
ideal gcd and $[\mideal,\nideal]$ for their ideal lcm. For a finite set
$\mathcal A$ of squarefree ideals and $s>0$, put
\[
 S_s(\mathcal A):=\sum_{\mideal,\nideal\in\mathcal A}
 \left(\frac{\Norm(\mideal,\nideal)}
 {\Norm[\mideal,\nideal]}\right)^s.
\]
The quotient in this kernel is the reciprocal of the norm of the
symmetric difference of the two prime supports. Define also
\begin{equation}\label{eq:square-energy-definition}
 E_\square(\mathcal A):=
 \#\{(\aideal_1,\aideal_2,\aideal_3,\aideal_4)\in\mathcal A^4:
 \aideal_1\aideal_2\aideal_3\aideal_4\text{ is a square}\}.
\end{equation}
This counts the terms that survive in the fourth moment of the resonator.
If squarefree products are identified with their support vectors,
$E_\square$ is additive energy in a vector space over $\mathbb F_2$.
We use the same definition for families of subsets of a finite set.

\begin{lemma}\label{lem:hamming-energy}
Let $P\geq1$ and $0\leq J\leq P/2$ be integers. Let
$\mathcal A(P,J)$ be the family of subsets of a $P$-element set having
at most $J$ elements.
\begin{enumerate}
\item We have
\begin{equation}\label{eq:hamming-energy}
 E_\square(\mathcal A(P,J))
 \leq |\mathcal A(P,J)|^2(J+1)^2
 \exp\left(4J+\frac{4J^2}{P}\right).
\end{equation}
\item If the underlying sets of the factors are disjoint, then
\begin{equation}\label{eq:product-energy}
 E_\square\left(\prod_{k=1}^t\mathcal A(P_k,J_k)\right)
 =\prod_{k=1}^t E_\square(\mathcal A(P_k,J_k)).
\end{equation}
\end{enumerate}
\end{lemma}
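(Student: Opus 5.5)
The plan is to treat subsets of the $P$-element ground set as vectors in $\mathbb F_2^P$, so that a product of squarefree ideals is a square exactly when the symmetric difference of the supports is empty. A quadruple $(A_1,A_2,A_3,A_4)$ therefore counts in $E_\square$ exactly when $A_1\triangle A_2=A_3\triangle A_4$. Writing
\[
 r(D)=\#\{(A,B)\in\mathcal A(P,J)^2:A\triangle B=D\},
\]
we get $E_\square(\mathcal A(P,J))=\sum_D r(D)^2$. Note that $r(D)=0$ unless $m:=|D|\leq 2J$.

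For part (i), first parametrise the pairs counted by $r(D)$. If $A\triangle B=D$, then
\[
 A=C\cup D_1,\qquad B=C\cup(D\setminus D_1),
\]
with $C\cap D=\emptyset$ and $D_1\subseteq D$. The size constraints force $|C|\leq J-\max(|D_1|,m-|D_1|)\leq J-\lceil m/2\rceil$. Splitting by the sizes $c=|C|$ and $j=|D_1|$ gives
\[
 r(D)\leq\sum_{j=0}^{m}\binom mj\sum_{c\leq J-\lceil m/2\rceil}\binom{P}{c}.
\]
I will keep the sizes separate rather than summing immediately, since the factor $(J+1)^2$ in \eqref{eq:hamming-energy} appears to come from the at most $J+1$ admissible values of $|C|$ in each of the two factors of $r(D)^2$. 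Next compare with $|\mathcal A(P,J)|\geq\binom PJ$. Since $\binom P{c-1}/\binom Pc=c/(P-c+1)$ and $J\leq P/2$, each binomial with $c\leq J-k$ is at most $\prod_{i<k}\frac{J-i}{P-J+i+1}\binom PJ$. This product is $\leq(J/(P-J))^k\binom PJ$, and the factor $(1-J/P)^{-k}$ is what produces the $\exp(4J^2/P)$ correction.

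Then sum over $D$ grouped by $m=|D|$, using $\binom Pm\leq P^m/m!$. The powers of $P$ cancel against $(J/P)^{2\lceil m/2\rceil}\leq(J/P)^m$, and the total is bounded by
\[
 |\mathcal A|^2(J+1)^2\sum_m\frac{(cJ)^m}{m!}(1-J/P)^{-m},
\]
where the constant $c$ depends on how the $\binom mj$-factors are counted. Bounding the exponential series gives $\exp(4J+4J^2/P)$, after using $\sum_j\binom mj^2=\binom{2m}{m}$ or a direct count in place of the crude $4^m$.

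The main obstacle is getting the constant exactly $4$ in the exponent rather than something like $8$. I expect this to require careful bookkeeping of the parity of $m$ and of the $\binom mj$ weights, possibly by applying Cauchy--Schwarz within each $m$-class before summing; only the form $\exp(O(J+J^2/P))$ matters downstream.

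For part (ii), the supports of the factors are disjoint. An element of the product family is a tuple $(A^{(1)},\dots,A^{(t)})$ with $A^{(k)}\in\mathcal A(P_k,J_k)$, and its support vector is the direct sum of the blockwise vectors. A sum of four such vectors vanishes if and only if each block component vanishes. The quadruples counted by the left side of \eqref{eq:product-energy} therefore correspond bijectively to $t$-tuples of blockwise square quadruples, which gives the product formula.
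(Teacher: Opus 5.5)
Your proof is correct, and it takes a different route from the paper's.

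\textbf{How the two routes differ.} The paper works layer by layer. It computes $E_\square(S_j)$ exactly for the family of $j$-subsets, where only even $|D|=2r$ occur and each such $D$ has $\binom{2r}{r}\binom{P-2r}{j-r}$ representations, and shows $E_\square(S_j)\le e^{4j+4j^2/P}|S_j|^2$. It then glues the layers by Fourier analysis on $\mathbb F_2^P$ and H\"older, $E_\square(\bigcup_j S_j)\le\bigl(\sum_j E_\square(S_j)^{1/4}\bigr)^4$. There the factor $(J+1)^2$ comes from Cauchy--Schwarz on $\sum_j|S_j|^{1/2}$. Your count of $r(D)$ over the whole ball handles the cross-layer quadruples, and the odd $|D|$, directly, with no Fourier step.

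\textbf{The constant is not an obstacle.} The crude bound $\sum_j\binom mj=2^m$ already gives
$r(D)\le 2^m(J+1)(J/(P-J))^{\lceil m/2\rceil}|\mathcal A(P,J)|$. Square this, sum over $D$, and use $\binom Pm\le P^m/m!$ together with $(J/(P-J))^{2\lceil m/2\rceil}\le (J/(P-J))^m$. This yields
\[
 E_\square(\mathcal A(P,J))\le (J+1)^2|\mathcal A(P,J)|^2\sum_{m\le 2J}\frac{(4J)^m}{m!}\Bigl(1-\frac JP\Bigr)^{-m},
\]
so the coefficient $4$ of $J$ is already exact. The only care needed is with the last factor. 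Since $m\le 2J$, bound it by $(1-J/P)^{-2J}\le e^{4J^2/P}$, using $-\log(1-x)\le 2x$ on $[0,1/2]$; the paper treats its factor $(P/(P-j))^{2j}$ the same way. Do not fold it into the exponential series: that would only give $e^{4J/(1-J/P)}\le e^{4J+8J^2/P}$. No parity bookkeeping, $\binom{2m}{m}$ identity, or Cauchy--Schwarz within $m$-classes is needed.

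\textbf{What each route buys.} Your route gives slightly more. The shift $c\mapsto c+k$ yields $\sum_{c\le J-k}\binom Pc\le (J/(P-J+1))^k|\mathcal A(P,J)|$, so the factor $(J+1)^2$ can be dropped entirely. The paper's route gives an exact single-layer formula and a gluing inequality valid for arbitrary disjoint unions.

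Your argument for part (ii) is the same as the paper's.
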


\begin{proof}
For the count within one layer, let $S_j$ be the family of
$j$-element subsets. If $D$ has $2r$ elements, the number of ordered pairs
$(A,B)\in S_j^2$ with $A\mathbin\triangle B=D$ is
\[
 \binom{2r}{r}\binom{P-2r}{j-r}.
\]
Indeed, $r$ elements of $D$ belong to $A\setminus B$. The rest belong to
$B\setminus A$. Their common intersection has $j-r$ elements outside $D$.
For odd $|D|$, or $|D|>2j$, the count is zero. Pairing equal symmetric
differences therefore gives
\begin{equation}\label{eq:sphere-energy-exact}
 E_\square(S_j)=\sum_{r=0}^j
 \binom P{2r}\binom{2r}r^2\binom{P-2r}{j-r}^2.
\end{equation}
For $1\leq j\leq P/2$, use
\[
 \binom P{2r}\leq\frac{P^{2r}}{(2r)!},\qquad
 \binom{2r}r\leq4^r,\qquad
 \binom{P-2r}{j-r}\leq\frac{P^{j-r}}{(j-r)!},\qquad
 \binom Pj\geq\frac{(P-j)^j}{j!}.
\]
Since $-\log(1-j/P)\leq2j/P$, these inequalities imply
\begin{align*}
 \frac{E_\square(S_j)}{|S_j|^2}
 &\leq\left(\frac P{P-j}\right)^{2j}
 \sum_{r=0}^j\frac{16^r(j!/(j-r)!)^2}{(2r)!}\\
 &\leq e^{4j^2/P}\sum_{r=0}^j\frac{(4j)^{2r}}{(2r)!}
 \leq e^{4j+4j^2/P}.
\end{align*}
The same bound holds for $j=0$.

The ball also contains quadruples from different layers. To retain these
terms, work in $G=\mathbb F_2^P$ and define
$\widehat f(\xi)=\sum_{x\in G}f(x)(-1)^{x\cdot\xi}$.
For subsets $B_1,\ldots,B_4$ of $G$, Fourier inversion and H\"older's
inequality give
\begin{align*}
 \#\{(x_i)\in\textstyle\prod_i B_i:\sum_i x_i=0\}
 &=|G|^{-1}\sum_{\xi\in G}\prod_{i=1}^4
 \widehat{\one_{B_i}}(\xi)\\
 &\leq\prod_{i=1}^4 E_\square(B_i)^{1/4}.
\end{align*}
Here $|G|^{-1}\sum_\xi|\widehat{\one_B}(\xi)|^4=E_\square(B)$.
Expand the ball into its disjoint layers and apply this inequality. Then
\begin{align*}
 E_\square(\mathcal A(P,J))
 &\leq\left(\sum_{j=0}^J E_\square(S_j)^{1/4}\right)^4\\
 &\leq e^{4J+4J^2/P}
 \left(\sum_{j=0}^J |S_j|^{1/2}\right)^4\\
 &\leq e^{4J+4J^2/P}(J+1)^2
 \left(\sum_{j=0}^J|S_j|\right)^2.
\end{align*}
This proves the first assertion. In a product of disjoint blocks, the
square condition holds in each block separately. Its count therefore
factorizes, proving the second assertion.
\end{proof}

We next estimate the gain from one block. Two products in the block may
share most of their primes. The kernel then charges only the primes
that differ. The following calculation balances the number of such
pairs against their weights.

\begin{lemma}\label{lem:squarefree-block}
Let $\mathscr P$ have $P$ elements, with weights $0<w_p<1$. Put
\[
 H:=\sum_{p\in\mathscr P}w_p,\qquad W:=\max_{p\in\mathscr P}w_p,
 \qquad
 S_{\mathscr P}(J):=\sum_{A,B\in\mathcal A(P,J)}
 \prod_{p\in A\triangle B}w_p.
\]
Suppose
\[
 1\le r\le J/2,\qquad J\le P/3,\qquad
 (J+2r)W/H\le1/2.
\]
Then, with an absolute implied constant,
\begin{align}
 \log\frac{S_{\mathscr P}(J)}{|\mathcal A(P,J)|}
 &\geq r\log\left(\frac{e^2JH^2}{Pr^2}\right)\notag\\
 &\quad-O\!\left(
 r\frac{(J+2r)W}{H}+\frac{r^2}{J}+\frac{rJ}{P}
 +\log(r+1)\right).
 \label{eq:squarefree-block-quantitative}
\end{align}
Consequently, if
\[
 r\longrightarrow\infty,\qquad r/J\longrightarrow0,\qquad
 J/P\longrightarrow0,\qquad (J+r^2)W/H\longrightarrow0,
\]
then, uniformly under these hypotheses,
\begin{equation}\label{eq:squarefree-block}
 \log\frac{S_{\mathscr P}(J)}{|\mathcal A(P,J)|}
 \geq r\log\left(\frac{e^2JH^2}{Pr^2}\right)+o(r).
\end{equation}
\end{lemma}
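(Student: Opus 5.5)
The plan is to bound $S_{\mathscr P}(J)$ from below by keeping only pairs $(A,B)$ of the top layer $S_J$ (exactly $J$ elements each) with $|A\setminus B|=|B\setminus A|=r$. All terms are nonnegative, so this truncation is legitimate. Every such pair is determined by the disjoint ordered pair $(R,R')=(A\setminus B,\,B\setminus A)$ of $r$-subsets of $\mathscr P$ together with the common part $A\cap B$. The common part is a $(J-r)$-subset of $\mathscr P\setminus(R\cup R')$, so there are exactly $\binom{P-2r}{J-r}$ choices for it. Hence
\[
 S_{\mathscr P}(J)\geq\binom{P-2r}{J-r}
 \sum_{\substack{R,R'\subset\mathscr P,\ |R|=|R'|=r\\ R\cap R'=\emptyset}}
 \prod_{p\in R}w_p\prod_{p\in R'}w_p .
\]

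Next I estimate the weighted count of disjoint pairs. Multiplying it by $(r!)^2$ gives the sum of $w_{p_1}\cdots w_{p_{2r}}$ over ordered $2r$-tuples of distinct elements. Choosing the entries one at a time, the $i$-th entry ranges over all of $\mathscr P$ except at most $i-1$ excluded elements. The available weight at that step is therefore at least $H-(i-1)W\geq H-2rW$. This gives the lower bound
\[
 \frac{H^{2r}}{(r!)^2}\Bigl(1-\frac{2rW}{H}\Bigr)^{2r}
 =\frac{H^{2r}}{(r!)^2}\exp\Bigl\{-O\Bigl(\frac{r^2W}{H}\Bigr)\Bigr\}.
\]
Here I use $2rW/H\leq1/2$ from the hypothesis, so that $\log(1-x)\geq-2x$ applies. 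The error $r^2W/H$ is at most $r(J+2r)W/H$ because $r\leq J/2$. Stirling's formula $(r!)^2=r^{2r}e^{-2r}\cdot O(r)$ then produces the factor $(e^2H^2/r^2)^r$ and the error term $\log(r+1)$.

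For the denominator I use $J\leq P/3$. Successive layers satisfy $|S_{j-1}|/|S_j|=j/(P-j+1)\leq1/2$ for $j\leq J$, so $|\mathcal A(P,J)|\leq2\binom PJ$. It remains to compare $\binom{P-2r}{J-r}$ with $\binom PJ$. Writing the quotient as a product of factorial ratios gives
\[
 \frac{\binom{P-2r}{J-r}}{\binom PJ}
 =\prod_{i=0}^{r-1}\frac{J-i}{P-i}\cdot
  \prod_{i=0}^{r-1}\frac{P-J-i}{P-r-i}.
\]
The first product is $(J/P)^r\exp\{-O(r^2/J)\}$, using $r\leq J/2$ and $r\leq P$. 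The second product is $(1-J/P)^r\exp\{O(r^2/P)\}=\exp\{-O(rJ/P)\}$, noting $r^2/P\leq rJ/P$. Multiplying everything together gives
\[
 r\log\frac{e^2JH^2}{Pr^2}
 -O\Bigl(\frac{r(J+2r)W}{H}+\frac{r^2}J+\frac{rJ}P+\log(r+1)\Bigr),
\]
which is \eqref{eq:squarefree-block-quantitative}. The asymptotic form \eqref{eq:squarefree-block} follows directly under the stated limits. Indeed $r(J+2r)W/H\leq(rJ+2r^2)W/H$, and this is $o(r)$ because $(J+r^2)W/H\to0$. The remaining errors $r^2/J$, $rJ/P$ and $\log(r+1)$ are each $o(r)$ since $r/J\to0$, $J/P\to0$ and $r\to\infty$.

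The main obstacle is keeping the binomial comparison uniform, so that all losses stay $o(r)$ rather than merely $O(r)$; the constant $e^2$ from Stirling must survive exactly. The other steps are elementary truncations with nonnegative terms. I therefore expect the careful bookkeeping of the factorial ratios above to be the only delicate point.
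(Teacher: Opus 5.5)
Your proof is correct and follows the paper's argument. You restrict to top-layer pairs $A=D\cup R$, $B=D\cup R'$ with $|R|=|R'|=r$, bound the weighted choice of $R,R'$ by sequential selection, and compare binomials via Stirling. The only difference is the order of summation. The paper sums freely over $D$, giving $\binom{P}{J-r}$, and then excludes $D$ when choosing the differing primes; that exclusion is the source of its loss $r(J+2r)W/H$. You instead choose $R,R'$ freely and count $D$ exactly as $\binom{P-2r}{J-r}$, so your weight loss is only $O(r^2W/H)$, and the extra binomial factor costs $O(rJ/P)$, which is already within the allowed error.
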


\begin{proof}
Restrict the sum to pairs
$A=D\mathbin{\dot\cup}X$ and $B=D\mathbin{\dot\cup}Y$, where
$|D|=J-r$, $|X|=|Y|=r$, and $D,X,Y$ are pairwise disjoint.
We need a lower bound for the weighted choices of $X$ and $Y$ after
some primes have been excluded.

Let $E$ be an excluded set with $|E|\leq J+r$. After $j$ distinct entries
have been chosen, the total weight available for the next entry is at
least $H-(|E|+j)W$. Thus the total weight of ordered distinct $r$-tuples
outside $E$ lies between
\[
 \prod_{j=0}^{r-1}\bigl(H-(|E|+j)W\bigr)
 \quad\text{and}\quad H^r.
\]
By the hypothesis $(J+2r)W/H\le1/2$, every factor in this product is
positive.  Using $\log(1-x)\ge-2x$ for $0\le x\le1/2$ gives the
quantitative lower bound
\[
 \prod_{j=0}^{r-1}\bigl(H-(|E|+j)W\bigr)
 \ge H^r\exp\left(-O\left(\frac{r(J+2r)W}{H}\right)\right).
\]
Dividing by $r!$ gives the corresponding sum over unordered subsets.
Apply this first with $E=D$, then with $E=D\cup X$. Since $D$ has
$\binom P{J-r}$ possible values,
\[
 S_{\mathscr P}(J)\geq
 \binom P{J-r}\frac{H^{2r}}{(r!)^2}
 \exp\left(-O\left(\frac{r(J+2r)W}{H}\right)\right).
\]
Since $J\le P/3$ and $r\le J/2$, comparison of successive
binomial coefficients and a logarithmic expansion give
\[
 |\mathcal A(P,J)|\leq2\binom PJ,
 \qquad
 \frac{\binom P{J-r}}{\binom PJ}
 \geq\left(\frac JP\right)^r
 \exp\left(-O\left(\frac{r^2}{J}+\frac{rJ}{P}\right)\right).
\]
For the second bound, write the ratio as
$\prod_{j=0}^{r-1}(J-j)/(P-J+j+1)$ and take logarithms.
Stirling's formula now gives
\begin{align*}
 \log\frac{S_{\mathscr P}(J)}{|\mathcal A(P,J)|}
 &\geq r\log\frac JP+2r\log H-2r\log r+2r\\
 &\quad-O\left(\frac{r(J+2r)W}{H}+\frac{r^2}{J}
 +\frac{rJ}{P}+\log(r+1)\right).
\end{align*}
This proves \eqref{eq:squarefree-block-quantitative}.  Under the asymptotic
hypotheses of the lemma, every term in the error is $o(r)$, uniformly,
and \eqref{eq:squarefree-block} follows.
\end{proof}

\begin{proposition}\label{prop:gal}Fix $S$. For every sufficiently large integer $N$, there is a set
$\mathcal M$ of $N$ squarefree integral ideals coprime to $S$ with the
following properties.
\begin{enumerate}
\item Its G\'al sum satisfies
\begin{equation}\label{eq:gal}
 S_{1/2}(\mathcal M)\geq
 N\exp\left\{(2+o(1))
 \sqrt{\frac{\log N\log_3N}{\log_2N}}\right\}.
\end{equation}
More generally, uniformly for $A$ in any fixed compact subset of
$[0,\infty)$,
\begin{equation}\label{eq:moving-gal}
 S_{1/2+A/\log_2N}(\mathcal M)\geq
 N\exp\left\{(2e^{-A}+o(1))
 \sqrt{\frac{\log N\log_3N}{\log_2N}}\right\}.
\end{equation}
\item Its prime support and square energy satisfy
\begin{equation}\label{eq:yM}
 y_{\mathcal M}:=
 \max_{\substack{\mideal\in\mathcal M\\\p\mid\mideal}}\Norm\p
 \leq(\log N)^{1+o(1)},
\end{equation}
and
\begin{equation}\label{eq:gal-energy}
 E_\square(\mathcal M)\leq N^{2+o(1)}.
\end{equation}
\end{enumerate}
\end{proposition}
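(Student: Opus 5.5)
The set $\mathcal M$ will be a product of sparse layers from disjoint prime blocks, as in de la Bretèche--Tenenbaum, and Lemmas~\ref{lem:hamming-energy} and \ref{lem:squarefree-block} will be applied blockwise. Put $L=\log N$ and $\lambda=\log_2N$. Let $t\approx \sqrt{L\lambda/\log_3 N}$ be the number of blocks, up to a constant factor fixed later. Partition the primes $\p\notin S$ with $\Norm\p\in(L\lambda,\,L\lambda^{C}]$ into $t$ disjoint blocks $\mathscr P_1,\dots,\mathscr P_t$ of nearly equal size $P$. By the prime ideal theorem, $P\asymp L\lambda^{C}/(t\lambda)$. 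In each block take $\mathcal A(P,J)$ with $J$ chosen so that $\prod_k|\mathcal A(P,J)|\approx N$. Then $\log\binom PJ\approx J\log(P/J)$ and $tJ\log(P/J)\approx L$. If $P/J$ is a power of $\lambda$, then $J\approx L/(t\log(P/J))$, which is still much smaller than $P$. Every prime has norm at most $(\log N)^{1+o(1)}$, which gives \eqref{eq:yM}. To obtain exactly $N$ elements, I would take the product family and then discard a subfamily, or adjust $J$ in a single block. Discarding costs at most a factor $1+o(1)$ in the lower-bound argument if it is done by restricting one block to a sub-ball. Alternatively, choose the parameters so that the product lies in $[N,N^{1+o(1)}]$ and take a random subset of size $N$. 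A random subset keeps the Gál ratio on average, and its energy can only decrease.

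For the energy, Lemma~\ref{lem:hamming-energy} gives
\[
E_\square(\mathcal M)\le |\mathcal M|^2\bigl((J+1)^2e^{4J+4J^2/P}\bigr)^t .
\]
The exponent is $t(4J+2\log(J+1)+o(1))\ll tJ\approx L/\log(P/J)=o(L)$. This is $N^{o(1)}$, which proves \eqref{eq:gal-energy}. The sparsity assumption $\log(P/J)\to\infty$ is exactly what is needed here.

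For the Gál sum, the kernel $(\Norm(\mideal,\nideal)/\Norm[\mideal,\nideal])^{s}$ factors over the blocks with weights $w_\p=(\Norm\p)^{-s}$. Hence
\[
S_s(\mathcal M)/|\mathcal M|\ge \prod_k S_{\mathscr P_k}(J)/|\mathcal A(P,J)| .
\]
Apply \eqref{eq:squarefree-block} in each block with a parameter $r\to\infty$, $r=o(J)$. For $s=1/2+A/\lambda$, the primes satisfy $\log\Norm\p=(1+o(1))\lambda$, so $w_\p=e^{-A+o(1)}\Norm\p^{-1/2}$. By partial summation, $H\approx e^{-A}\sum_{\p\in\text{block}}\Norm\p^{-1/2}\approx e^{-A}P/\sqrt{L\lambda^{C'}}$. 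Each block then contributes about $r\log(e^2JH^2/(Pr^2))$. Optimizing $r$ gives $r\approx \sqrt{J}H/\sqrt P$ and a block gain of about $2r$. The total gain is therefore $2tr\approx 2e^{-A}t\sqrt{JP}/\sqrt{L\lambda^{C'}}$.

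The main obstacle is the parameter bookkeeping. The goal is to show that, after fixing the block range, $P/J$ and $t$ (letting the auxiliary exponents tend to their limits slowly), the quantity $2tr$ equals $(2e^{-A}+o(1))\sqrt{L\log_3N/\lambda}$, while the hypotheses $(J+r^2)W/H\to0$, $r/J\to0$ and $J/P\to0$ of Lemma~\ref{lem:squarefree-block} still hold. I would first replicate the de la Bretèche--Tenenbaum parameter choice, which has primes near $L\lambda$ and $P/J\approx\lambda^{1+o(1)}$, because that choice is known to give the constant $2$. I would then check three things. The $\log_3N$ must arise as $\log(e^2JH^2/(Pr^2))$, so that $r$ sits at $1/\log_3$ below its trivial size. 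The errors in \eqref{eq:squarefree-block-quantitative} must be $o(r)$ uniformly for $A$ in a compact set. Finally, the $e^{-A}$ factor enters only through $H$, so it scales $r$ and hence the gain linearly, which gives \eqref{eq:moving-gal}.
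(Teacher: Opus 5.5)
Your factorization of the kernel over disjoint blocks and your energy bound via Lemma~\ref{lem:hamming-energy} are fine, and the paper uses both. The gap is in the bookkeeping you postpone. The layout you actually specify cannot produce the factor $\log_3N$. That layout is: equal blocks partitioning the primes with $L\lambda<\Norm\p\le L\lambda^{C}$, with one common quota $J$.

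To see this, put $\ell=\log(P/J)$ and $\sigma_k=\sum_{\p\in\mathscr P_k}(\Norm\p)^{-1}$. Since $\max_{r>0}r\log(e^2c/r^2)=2\sqrt c$, and $H_k^2\le P\sigma_k$ for $s\ge1/2$, Lemma~\ref{lem:squarefree-block} certifies at most $2\sqrt{J\sigma_k}$ in block $k$. The total is therefore at most $2\sqrt{tJ}\,(\sum_k\sigma_k)^{1/2}$.

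The cardinality condition $tJ(\ell+1)\sim\log N$, together with $tP\sim L\lambda^{C-1}$, forces $e^{\ell}\sim\lambda^{C-1}(\ell+1)$, that is, $\ell=(C-1+o(1))\log_3N$. Meanwhile the prime ideal theorem gives $\sum_k\sigma_k=(C-1+o(1))\log_3N/\log_2N$. The two factors $(C-1)\log_3N$ cancel, so the gain is at most $(2+o(1))\sqrt{\log N/\log_2N}$ for every $C$. This falls short of \eqref{eq:gal} by a factor $\sqrt{\log_3N}$ in the exponent.

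Your account of where $\log_3N$ comes from is also mistaken. Taking $r$ below $\sqrt{JH^2/P}$ only decreases $r\log(e^2JH^2/(Pr^2))$. In the paper's proof this block logarithm is $2+o(1)$, because $J_kH_k(A)^2/(P_kr_k(A)^2)\to1$.

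The missing idea is sparsity that grows from block to block over a much wider multiplicative range. Put $\ell_k=\log(P_k/J_k)$. Cauchy--Schwarz and $\sum_kJ_k\ell_k\le\log|\mathcal M|$ bound the certified gain by $2(\log N)^{1/2}(\sum_k\sigma_k/\ell_k)^{1/2}$. So one needs $\sum_k\sigma_k/\ell_k\sim\log_3N/\log_2N$, and the paper realizes the equality case.

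\begin{itemize}
\item It takes geometric blocks $(u^kT_1T_2,u^{k+1}T_1T_2]$ for $1\le k\le K_N=\lfloor T_2^\gamma\rfloor$, where $T_i=\log_iN$. This gives $\sigma_k\approx\log u/T_2$. The range $\exp(T_2^\gamma\log u)$ is still $(\log N)^{o(1)}$, so \eqref{eq:yM} and $\log\Norm\p=(1+o(1))\log_2N$ hold. But it is far wider than $\lambda^{C}$.
\item The quotas are $J_k=\lfloor aT_1/(k^2T_3)\rfloor$. Hence $\ell_k=k\log u+O(\log k+\log_4N)$ grows linearly in $k$. In particular $P_k/J_k$ is not of size $\lambda^{1+o(1)}$ throughout, as you suggest.
\item The cardinality $(\log u+o(1))\sum_kkJ_k=(a\gamma\log u+o(1))\log N$ is governed by $\sum_{k\le K_N}1/k=\gamma\log_3N+O(1)$. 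So is the gain $(2+o(1))\sum_kr_k(A)$, where $r_k(A)\asymp e^{-A}k^{-1}\sqrt{T_1/(T_2T_3)}$.
\end{itemize}

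This harmonic sum is the true source of $\log_3N$. Letting $u\downarrow1$, $\gamma\uparrow1$ and $\eta\downarrow0$ diagonally then yields $2e^{-A}$.

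Separately, your random-subset completion is not free. A uniformly random $N$-subset of a set of size $M'$ keeps each off-diagonal pair with probability about $(N/M')^2$. The normalized G\'al sum is therefore multiplied by about $N/M'$, which for $M'=N^{1+o(1)}$ can swamp $\exp\{O(\mathcal V(N))\}$.

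The paper handles the cardinality differently. It raises quotas one at a time to get $N/M\le1+\max_kP_k=(\log N)^{1+o(1)}$. It then multiplies by a disjoint cube of auxiliary primes, whose normalized G\'al sum is at least $1$. Finally it adds elements using positivity of the kernel, losing only a factor $2$. The energy stays $N^{2+o(1)}$ because every element lies in $\mathcal C_h\mathcal M(a_+)$.
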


\begin{proof}
The construction uses blocks of geometrically increasing prime norms.
The number of primes selected from the $k$th block decreases like
$k^{-2}$. This makes the logarithm of the cardinality and the gain in
the kernel depend on the same harmonic sum.

First fix $1<u\leq e$, $0<\gamma<1$, and $0<\eta<1/2$.
Write $T_i=\log_iN$ for $1\leq i\leq4$, and put
$K_N=\lfloor T_2^\gamma\rfloor$.
Let $\mathscr P_k$ consist of the prime ideals outside $S$ whose norms
belong to
\[
 (u^kT_1T_2,u^{k+1}T_1T_2],\qquad 1\leq k\leq K_N.
\]
Set $P_k=|\mathscr P_k|$ and
\[
 H_k:=\sum_{\p\in\mathscr P_k}(\Norm\p)^{-1/2},
 \qquad W_k:=\max_{\p\in\mathscr P_k}(\Norm\p)^{-1/2}.
\]
For fixed $u$ and $\gamma$, the block endpoints satisfy
$\log(u^kT_1T_2)=T_2+o(T_2)$ uniformly for $k\le K_N$.  Thus the
prime ideal theorem (and partial summation) may be applied uniformly over
all the blocks, giving
\begin{align}
 P_k&=u^k(u-1)T_1\{1+o(1)\},\label{eq:nf-block-P}\\
 H_k&=2u^{k/2}(\sqrt u-1)\sqrt{T_1/T_2}\{1+o(1)\}.
 \label{eq:nf-block-H}
\end{align}
Here the logarithm of every norm in a block is $T_2+o(T_2)$.
In particular, with
$\lambda(u)=4(\sqrt u-1)/(\sqrt u+1)$, we have
\begin{equation}\label{eq:nf-block-density}
 \frac{H_k^2}{P_k}=\frac{\lambda(u)+o(1)}{T_2},
 \qquad \frac{W_k}{H_k}\ll_u\frac{u^{-k}}{T_1}.
\end{equation}

For a fixed $a>0$, put
$J_k(a)=\lfloor aT_1/(k^2T_3)\rfloor$.
Let $\mathcal M(a)$ contain all products obtained by choosing at most
$J_k(a)$ primes from each $\mathscr P_k$. We claim that
\begin{equation}\label{eq:nf-entropy}
 \log|\mathcal M(a)|=(a\gamma\log u+o(1))\log N.
\end{equation}
To verify the cardinality, first note the uniform estimate
\begin{equation}\label{eq:binomial-tail-entropy}
 \log\sum_{\nu=0}^J\binom P\nu
 =J\log\frac PJ+J+
 O\left(\log(J+1)+\frac{J^2}{P}\right)
 \qquad(1\leq J,\ J/P=o(1)).
\end{equation}
Indeed,
$\binom P{J-\ell}/\binom PJ\leq(J/(P-J+1))^\ell$,
so the sum is $\binom PJ(1+O(J/P))$.
Apply Stirling's formula to
\[
 \log\binom PJ=J\log P-\log J!+
 \sum_{\nu=0}^{J-1}\log(1-\nu/P).
\]
The last sum is $O(J^2/P)$, proving the estimate.

For our quotas,
\[
 \log\frac{P_k}{J_k(a)}
 =k\log u+2\log k+T_4+O_{u,a}(1).
\]
The first term provides the main contribution, since
\[
 \sum_{k\leq K_N}kJ_k(a)
 =\frac{aT_1}{T_3}\sum_{k\leq K_N}\frac1k+O(K_N^2)
 =(a\gamma+o(1))T_1.
\]
The remaining terms satisfy
\begin{align*}
 \sum_{k\leq K_N}J_k(a)(1+\log k)&=O_{u,a}(T_1/T_3)+o(T_1),\\
 T_4\sum_{k\leq K_N}J_k(a)&=O_{u,a}(T_1T_4/T_3)+o(T_1)=o(T_1),\\
 \sum_{k\leq K_N}\log(J_k(a)+1)&\leq K_NT_2=o(T_1),\\
 \sum_{k\leq K_N}\frac{J_k(a)^2}{P_k}
 &\ll_{u,a}\frac{T_1}{T_3^2}
 \sum_{k\geq1}\frac{u^{-k}}{k^4}=o(T_1).
\end{align*}
Also $K_N^2=o(T_1)$, so the floor errors are negligible.
Summing \eqref{eq:binomial-tail-entropy} proves \eqref{eq:nf-entropy}.

To bring the cardinality close to $N$, we adjust the quotas.
Put $a_\pm=(1\pm\eta)/(\gamma\log u)$.
For large $N$, $|\mathcal M(a_-)|<N<|\mathcal M(a_+)|$.
Starting with the lower quotas, increase one quota by one at each step,
without exceeding the upper quotas. Stop before the cardinality first
exceeds $N$. Denote the resulting family by $\mathcal M_*$ and its
cardinality by $M$. Its quotas satisfy
$J_k(a_-)\leq J_k\leq J_k(a_+)$, and
\begin{equation}\label{eq:nf-near-N}
 M\leq N,\qquad N/M\leq1+\max_{k\leq K_N}P_k
 \leq(\log N)^{1+o(1)}.
\end{equation}
For the middle inequality, a single quota increase changes the
cardinality by at most
\[
 \frac{\sum_{j\leq J+1}\binom Pj}{\sum_{j\leq J}\binom Pj}
 \leq1+\frac{\binom P{J+1}}{\binom PJ}\leq1+P.
\]

We estimate the central and shifted kernels together. Fix $A_0\geq0$
and let $0\leq A\leq A_0$. Define
\[
 H_k(A):=\sum_{\p\in\mathscr P_k}
 (\Norm\p)^{-1/2-A/T_2},\qquad
 W_k(A):=\max_{\p\in\mathscr P_k}(\Norm\p)^{-1/2-A/T_2}.
\]
Since $\log\Norm\p=T_2+o(T_2)$ uniformly on the blocks,
\begin{equation}\label{eq:moving-Hk}
 H_k(A)=e^{-A}H_k\{1+o(1)\},\qquad
 W_k(A)/H_k(A)=\{1+o(1)\}W_k/H_k.
\end{equation}
Thus a shift of $A/T_2$ multiplies the available weight in each block
by $e^{-A}+o(1)$. Choose
\[
 \alpha=\sqrt{a_-\lambda(u)},\qquad
 r_k(A)=\left\lfloor\frac{e^{-A}\alpha}{k}
 \sqrt{\frac{T_1}{T_2T_3}}\right\rfloor.
\]
These values balance the gain from the weights with the cost of
choosing different primes. We check the hypotheses of the preceding
lemma, uniformly for $k\leq K_N$ and $0\leq A\leq A_0$:
\begin{align*}
 \min_k r_k(A)&\gg_{u,\gamma,\eta,A_0}
 \frac{\sqrt{T_1}}{T_2^{\gamma+1/2}\sqrt{T_3}}\longrightarrow\infty,\\
 \max_k\frac{r_k(A)}{J_k}&\ll_{u,\gamma,\eta,A_0}
 \frac{K_N\sqrt{T_3}}{\sqrt{T_1T_2}}\longrightarrow0,\\
 \max_k\frac{J_k}{P_k}&\ll_{u,\gamma,\eta}T_3^{-1}\longrightarrow0,\\
 \frac{(J_k+r_k(A)^2)W_k(A)}{H_k(A)}
 &\ll_{u,\gamma,\eta,A_0}
 \frac{u^{-k}}{k^2T_3}+\frac{u^{-k}}{k^2T_2T_3}=o(1).
\end{align*}
All four $o(1)$ statements are uniform in $k\le K_N$ and
$0\le A\le A_0$.  In particular, the quantitative error in
\eqref{eq:squarefree-block-quantitative}, divided by $r_k(A)$, tends to
zero uniformly.  Since $\min_k r_k(A)\to\infty$, summing the block errors
therefore contributes
\[
 o\!\left(\sum_{k\le K_N}r_k(A)\right)
\]
uniformly for $0\le A\le A_0$.
Moreover, \eqref{eq:nf-block-density} and the lower quota bound imply
\[
 \frac{e^2J_kH_k(A)^2}{P_kr_k(A)^2}
 \geq e^2\{1+o(1)\}.
\]
The kernel factorizes across the disjoint blocks. Apply
Lemma~\ref{lem:squarefree-block} and sum its logarithmic bounds to obtain
\begin{align}
 \log\frac{S_{1/2+A/T_2}(\mathcal M_*)}{M}
 &\geq(2+o(1))\sum_{k\leq K_N}r_k(A)\notag\\
 &=\left(2e^{-A}\sqrt{\gamma(1-\eta)
 \frac{\lambda(u)}{\log u}}+o(1)\right)
 \sqrt{\frac{T_1T_3}{T_2}}.
 \label{eq:nf-gal-intermediate}
\end{align}
Here $\sum_{k\leq K_N}1/k=\gamma T_3+O(1)$, and the total floor error
$O(K_N)$ is $o(\sqrt{T_1T_3/T_2})$.

To obtain exactly $N$ ideals, first double the cardinality as often as
possible using new prime divisors. Put
$h=\lfloor\log(N/M)/\log2\rfloor$.
By \eqref{eq:nf-near-N}, $h=O(T_2)$.
Choose distinct prime ideals $\mathfrak q_1,\ldots,\mathfrak q_h$ outside
$S$ and the original blocks, with norms at most $T_2^{O_K(1)}$.
The prime ideal theorem supplies these primes below the first block.
Let $\mathcal C_h$ be the set of divisors of their product, and put
$\mathcal M_1=\mathcal C_h\mathcal M_*$.
Then $N/2<|\mathcal M_1|\leq N$. For every $s>0$,
\[
 \frac{S_s(\mathcal C_h)}{|\mathcal C_h|}
 =\prod_{\ell=1}^h(1+(\Norm\mathfrak q_\ell)^{-s})\geq1,
\]
and disjointness of the supports gives
\begin{equation}\label{eq:nf-cube-gal-factorization}
 \frac{S_s(\mathcal M_1)}{|\mathcal M_1|}
 =\frac{S_s(\mathcal C_h)}{|\mathcal C_h|}
 \frac{S_s(\mathcal M_*)}{M}
 \geq\frac{S_s(\mathcal M_*)}{M}.
\end{equation}
Now add unused elements of $\mathcal M(a_+)$ until the cardinality is $N$.
There are enough: the auxiliary cube uses primes disjoint from all
original blocks, so
$\mathcal M(a_+)\cap\mathcal M_1=\mathcal M_*$.  Hence the number
of available new elements is $|\mathcal M(a_+)|-M>N-M$, whereas at
most $N-|\mathcal M_1|\le N-M$ elements are needed.
Call the completed set $\mathcal M$. Positivity of the kernel yields
\[
 \frac{S_s(\mathcal M)}N
 \geq\frac{|\mathcal M_1|}{N}
 \frac{S_s(\mathcal M_1)}{|\mathcal M_1|}
 >\frac12\frac{S_s(\mathcal M_*)}{M}.
\]
Thus completing the cardinality costs only a bounded factor.

We must also check that the completion has small square energy.
Lemma~\ref{lem:hamming-energy} gives
\begin{align}
 \log\frac{E_\square(\mathcal M(a_+))}{|\mathcal M(a_+)|^2}
 &\ll\sum_{k\leq K_N}\left(J_k(a_+)+
 \frac{J_k(a_+)^2}{P_k}+\log(J_k(a_+)+1)\right)\notag\\
 &\ll_{u,\gamma,\eta}T_1/T_3+T_1/T_3^2+K_NT_2=o(T_1).
 \label{eq:nf-energy-upper-family}
\end{align}
All quotas are at most $P_k/2$ for large $N$, as the lemma requires.
Every element of $\mathcal M$ belongs to
$\mathcal U=\mathcal C_h\mathcal M(a_+)$.
The full cube $\mathcal C_h$ has square energy $2^{3h}$: three entries
determine the fourth. Therefore
\[
 E_\square(\mathcal M)\leq E_\square(\mathcal U)
 =2^{3h}E_\square(\mathcal M(a_+))
 \leq N^{2+2\eta+o(1)}.
\]
We used $2^h\leq N/M\leq(\log N)^{1+o(1)}$ and
$|\mathcal M(a_+)|=N^{1+\eta+o(1)}$.

Finally, let the fixed parameters approach their limiting values by
diagonal selection. For each integer $j\geq3$, take
\[
 u_j=e^{1/j},\qquad \gamma_j=1-1/j,\qquad \eta_j=1/j.
\]
All estimates above hold for these fixed parameters, uniformly for
$0\leq A\leq j$. More explicitly, for each fixed $j$ every $o(1)$
occurring in the block estimates may be chosen so that its supremum over
$0\leq A\leq j$ and $1\leq k\leq K_N$ tends to zero as $N\to\infty$.
Thus one may choose a single threshold $N_j$ which works simultaneously
for all these parameters. Choose increasing thresholds $N_j$ so that, for
$N\geq N_j$, every normalized error is at most $1/j$, and
$E_\square(\mathcal U)\leq N^{2+5/j}$.
We may also require
\[
 \frac{\log9}{\log N}\left(h+\sum_{k\leq K_N}J_k(a_+)\right)
 \leq\frac1j;
\]
for each fixed $j$, the sum in parentheses is
$O_j(\log N/\log_3N+\log_2N)=o(\log N)$.
This additional condition records the degree bound used in
Remark~\ref{rem:hypercontractive-energy}.
On $N_j\leq N<N_{j+1}$ use the $j$th construction.
Since $\lambda(u)/\log u\to1$ as $u\downarrow1$, the coefficient in
\eqref{eq:nf-gal-intermediate} tends to $2e^{-A}$, uniformly on every
fixed compact $A$-interval. This proves \eqref{eq:moving-gal} and its
case $A=0$, namely \eqref{eq:gal}. The energy bound follows from
$j=j(N)\to\infty$.

The largest norm in an original block is at most
$u_j^{K_N+1}T_1T_2$. Its logarithm satisfies
\[
 \frac{\log(u_j^{K_N+1}T_1T_2)}{T_2}
 =1+\frac{T_3}{T_2}
 +\frac{K_N+1}{jT_2}
 \leq1+o(1)+\frac1j.
\]
The auxiliary primes have smaller norms, proving \eqref{eq:yM}.
All ideals are squarefree and coprime to $S$ by construction.
\end{proof}

\begin{remark}[Energy and hypercontractivity]
\label{rem:hypercontractive-energy}
For a finite nonempty set $\mathcal A$ of squarefree ideals, put
\[
 \Delta(\mathcal A)=\max_{\mideal\in\mathcal A}
                         \#\{\p:\p\mid\mideal\}.
\]
Let $x_\p$ be independent uniform signs indexed by the union of its
prime supports, and define
\[
 F_{\mathcal A}(x)=\sum_{\mideal\in\mathcal A}
                         \prod_{\p\mid\mideal}x_\p.
\]
This multilinear polynomial has degree at most $\Delta(\mathcal A)$.
Orthogonality of the sign monomials gives
\[
 \mathbb E F_{\mathcal A}^2=|\mathcal A|,\qquad
 \mathbb E F_{\mathcal A}^4=E_\square(\mathcal A).
\]
Bonami hypercontractivity
\cite[Theorem~9.21, with $q=4$]{ODonnell} states that
$\|F_{\mathcal A}\|_4\leq
3^{\Delta(\mathcal A)/2}\|F_{\mathcal A}\|_2$.
Consequently,
\begin{equation}\label{eq:bonami-square-energy}
 E_\square(\mathcal A)\leq
                 9^{\Delta(\mathcal A)}|\mathcal A|^2.
\end{equation}
For the completed set in Proposition~\ref{prop:gal},
\[
 \Delta(\mathcal M)\leq h+\sum_{k\leq K_N}J_k(a_+)
 \leq h+\frac{\pi^2a_+}{6}\frac{\log N}{\log_3N}.
\]
The thresholds chosen above ensure $\Delta(\mathcal M)=o(\log N)$,
so \eqref{eq:bonami-square-energy} also proves
$E_\square(\mathcal M)\leq N^{2+o(1)}$ directly on the completed set.
Thus the energy estimate is a consequence of sparsity. The separate
block and truncation estimates establish that this sparsity is compatible
with the required G\'al gain, prime support, and approximate functional
equation. Lemma~\ref{lem:hamming-energy} provides a self-contained
combinatorial route to the energy bound used in the proof.
\end{remark}

The approximate functional equation only sees pairs whose symmetric
difference has bounded norm. The next lemma shows that these pairs
already carry the required G\'al mass.

\begin{lemma}\label{lem:gal-truncation}Fix $\eps>0$, and let $\mathcal M$ be supplied by
Proposition~\ref{prop:gal}.
\begin{enumerate}
\item If $N\leq X^{1/4}$, then
\[
 \sum_{\substack{\mideal,\nideal\in\mathcal M\\
 \Norm[\mideal,\nideal]/\Norm(\mideal,\nideal)\leq X^\eps}}
 \sqrt{\frac{\Norm(\mideal,\nideal)}{\Norm[\mideal,\nideal]}}
 \geq N\exp\left\{(2+o(1))
 \sqrt{\frac{\log N\log_3N}{\log_2N}}\right\}.
\]
\item Suppose $N=X^{\beta+o(1)}$, where $0<\beta\leq1/4$ is fixed.
Uniformly for $A$ in any fixed compact subset of $[0,\infty)$,
\begin{align*}
 &\sum_{\substack{\mideal,\nideal\in\mathcal M\\
 \Norm[\mideal,\nideal]/\Norm(\mideal,\nideal)\leq X^\eps}}
 \left(\frac{\Norm(\mideal,\nideal)}{\Norm[\mideal,\nideal]}\right)^{1/2+A/\log_2X}\\
 &\qquad\geq N\exp\left\{(2e^{-A}+o(1))
 \sqrt{\frac{\log N\log_3N}{\log_2N}}\right\}.
\end{align*}
\end{enumerate}
\end{lemma}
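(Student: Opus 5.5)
The plan is to throw away the pairs that do not survive the truncation and show that they carry only a negligible share of the Gál mass. The construction in Proposition~\ref{prop:gal} lets us do this by looking at the prime supports directly. For $\mideal,\nideal\in\mathcal M$, the quotient $\Norm[\mideal,\nideal]/\Norm(\mideal,\nideal)$ is the norm of the symmetric difference of their supports. Every prime in that support has norm at most $y_{\mathcal M}\le(\log N)^{1+o(1)}$ by \eqref{eq:yM}. The symmetric difference has at most $2\Delta(\mathcal M)$ primes. So
\[
 \log\frac{\Norm[\mideal,\nideal]}{\Norm(\mideal,\nideal)}
 \le 2\Delta(\mathcal M)(1+o(1))\log_2N .
\]
Remark~\ref{rem:hypercontractive-energy} gives
\[
 \Delta(\mathcal M)\le h+\frac{\pi^2a_+}{6}\frac{\log N}{\log_3N},
\]
with $h=O(\log_2N)$ and $a_+$ bounded for each fixed $j$. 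The crude bound is therefore $O_j(\log N\log_2N/\log_3N)$. This is \emph{not} $o(\log N)$, so the trivial support argument alone does not place every pair below $X^\eps$. This is the main obstacle.

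To get around it, I would bound the norm of the pairs that actually produce the gain. In the proof of Lemma~\ref{lem:squarefree-block}, the lower bound for $S_{\mathscr P}(J)$ came only from pairs of the form $A=D\dot\cup X$, $B=D\dot\cup Y$ with $|X|=|Y|=r_k(A)$. On these pairs the symmetric difference has exactly $2r_k(A)$ primes in block $k$, and the auxiliary cube $\mathcal C_h$ contributes at most $h$ further primes. For such restricted pairs,
\[
 \log\frac{\Norm[\mideal,\nideal]}{\Norm(\mideal,\nideal)}
 \le(2+o(1))\Bigl(\sum_{k\le K_N}r_k(A)+h\Bigr)\log_2N
 \ll\sqrt{\frac{\log N\log_3N}{\log_2N}}\,\log_2N
 =o(\log N)\le\eps\log X,
\]
since $N\le X^{1/4}$. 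So I would redo the lower bound for $S_s(\mathcal M)$ as follows. Restrict the sum, block by block, to pairs whose block-$k$ parts differ in exactly $2r_k$ primes. For the cube part, use the trivial diagonal pairs, or keep all cube pairs, since $h$ is small. Completion elements are dropped by positivity. Each restricted pair then satisfies the truncation condition, and the product of the block lower bounds from Lemma~\ref{lem:squarefree-block} reproduces \eqref{eq:nf-gal-intermediate}. The $M$-to-$N$ comparison costs only the bounded factor already computed in the proof of Proposition~\ref{prop:gal}.

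One point needs care. For $\mathcal M_*$, the restricted pairs are counted through the product structure: the sum over restricted pairs factorizes as a product over $k$ of the restricted block sums. The lower bound in Lemma~\ref{lem:squarefree-block} is in fact a lower bound for exactly that restricted block sum, so nothing is lost. For part (ii) I would do the same with the exponent $1/2+A/\log_2X$ in place of $1/2+A/\log_2N$. Since $N=X^{\beta+o(1)}$, we have $\log_2N=\log_2X+O(1)$. Therefore $A/\log_2X=A'/\log_2N$ with $A'=A(1+o(1))$, uniformly on compacts. The uniformity in \eqref{eq:moving-gal}, or rather in the underlying block estimate, then gives the coefficient $2e^{-A'}+o(1)=2e^{-A}+o(1)$. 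The same support bound shows the restricted pairs remain below $X^\eps$.
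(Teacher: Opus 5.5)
Your proposal is correct, but it takes a different route from the paper. You rightly note that the crude bound through $\Delta(\mathcal M)$ cannot put \emph{every} pair below $X^\eps$. The paper never needs that, because it only shows that the omitted pairs carry negligible mass. It uses just the stated outputs of Proposition~\ref{prop:gal}. Writing $d=\Norm[\mideal,\nideal]/\Norm(\mideal,\nideal)$, it notes that $d>X^\eps$ implies $d^{-1/2}\le X^{-\eps/6}d^{-1/3}$. For fixed $\mideal$, the map sending $\nideal$ to the symmetric difference of the supports is injective into squarefree ideals built from primes of norm at most $y_{\mathcal M}$. Hence $\sum_{\nideal}d^{-1/3}\le\prod_{\Norm\p\le y_{\mathcal M}}(1+(\Norm\p)^{-1/3})=\exp\{O(y_{\mathcal M}^{2/3})\}$. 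With \eqref{eq:yM} and $N\le X^{1/4}$, the omitted mass is at most $NX^{-\eps/6}\exp\{(\log N)^{2/3+o(1)}\}=o(N)$, which is then subtracted from \eqref{eq:gal}. Part (ii) uses the same tail, since the shifted kernel is smaller, and applies \eqref{eq:moving-gal} at $A\log_2N/\log_2X$.

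Your argument instead reopens the construction. You observe that the lower bound in Lemma~\ref{lem:squarefree-block} is really a bound for the sub-sum over pairs differing in exactly $2r_k$ primes of block $k$. These sub-sums multiply across blocks. They pass through the cube step (diagonal pairs) and the completion step (positivity) with the same normalization. Every such pair has $\log d\ll\sqrt{\log N\log_2N\log_3N}=o(\log N)$. This gives a stronger structural fact: the whole G\'al gain comes from pairs with $d\le N^{o(1)}$, so one could truncate far below $X^\eps$. The price is that you depend on the internals of the proof rather than the statement of the proposition.

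One check you left implicit is uniformity along the diagonal choice $j=j(N)$. It does hold: $a_-\lambda(u_j)=4j\tanh(1/(4j))\le1$ keeps $\sum_kr_k(A)\le2\sqrt{\log N\log_3N/\log_2N}$, and $\log y_{\mathcal M}\le(1+o(1))\log_2N$. The paper's tail bound is shorter and applies verbatim to any set satisfying \eqref{eq:gal} and \eqref{eq:yM}.
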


\begin{proof}
For an omitted pair put
$d=\Norm[\mideal,\nideal]/\Norm(\mideal,\nideal)>X^\eps$.
Then $d^{-1/2}\leq X^{-\eps/6}d^{-1/3}$.
Fix $\mideal$ and enlarge the sum over $\nideal$ to all squarefree
products of primes with norms at most $y_{\mathcal M}$.
At each prime, the two choices contribute $1+(\Norm\p)^{-1/3}$,
regardless of whether $\p$ divides $\mideal$. Hence
\[
 \sum_{\nideal\in\mathcal M}d^{-1/3}
 \leq\prod_{\Norm\p\leq y_{\mathcal M}}
 (1+(\Norm\p)^{-1/3})
 \leq\exp\{O_K(y_{\mathcal M}^{2/3})\}.
\]
By \eqref{eq:yM}, the total omitted central mass is at most
\[
 NX^{-\eps/6}\exp\{(\log N)^{2/3+o(1)}\}=o(N)
\]
in either range for $N$. Indeed $N\leq X^{1/4+o(1)}$ in the applications
here, so $(\log N)^{2/3+o(1)}=o(\log X)$ and the exponential factor
cannot offset the fixed power $X^{-\eps/6}$. The omitted shifted mass is
no larger.
Subtracting this bound from \eqref{eq:gal} proves the first assertion.

For the second, put $B_X=A\log_2N/\log_2X$.
Then $B_X=A+o(1)$ uniformly for bounded $A$, and
$B_X/\log_2N=A/\log_2X$ exactly.
Apply the compact-uniform estimate \eqref{eq:moving-gal} with $B_X$.
Since $e^{-B_X}=e^{-A}+o(1)$, its main term is the one asserted above.
Subtract the same $o(N)$ tail to finish the proof.
\end{proof}

Fix one set $\mathcal M$ from the proposition for every sufficiently
large $N$. The two bounds in its second assertion can be recorded with
one function. Define
\begin{equation}\label{eq:common-energy-omega}
 \omega(t):=\sup_{N\geq t}
 \max\left\{0,
 \frac{\log y_{\mathcal M}}{\log_2N}-1,
 \frac{\log E_\square(\mathcal M)}{\log N}-2\right\}.
\end{equation}
Then $\omega(t)\to0$, and the chosen sets satisfy
$y_{\mathcal M}\leq(\log N)^{1+\omega(N)}$ and
$E_\square(\mathcal M)\leq N^{2+\omega(N)}$.
These are the uniform bounds used in the fourth-moment argument.

\section{Large values near the central point}
\label{sec:central}We prove all three assertions of Theorem~\ref{thm:central-max}
together.  The same argument covers the central point by taking $A=0$.
The resonator assigns extra weight to characters whose values agree on
many small prime ideals.  The G\'al sum measures the resulting gain in
the first moment.  A fourth-moment estimate then shows that this gain
cannot come from too few characters.

Fix a nonzero function $W\in C_c^\infty((1,2))$ with $0\leq W\leq1$,
and put
\[
 I_W:=\int_0^\infty W(t)\,dt>0.
\]
For a finite set $\mathcal M$ of ideals, write
\[
 R_{\aideal}:=\sum_{\mideal\in\mathcal M}
 \chi_{\aideal}(\mideal).
\]
These sums are real because the characters are quadratic.

\begin{lemma}\label{lem:nf-fourth-moment}
Assume \textup{GRH}$_K$.  Fix $0<\beta<1/4$, and put
$N=\lfloor X^\beta\rfloor$.
Let $\mathcal M$ be an $N$-element set of squarefree ideals coprime to $S$.
Suppose that a fixed function $\omega(t)\to0$ satisfies
\[
 y_{\mathcal M}\leq(\log N)^{1+\omega(N)},
 \qquad E_\square(\mathcal M)\leq N^{2+\omega(N)}.
\]
Then
\begin{equation}\label{eq:nf-fourth-moment}
 \sum_{\aideal\in\Om}W(\Norm\aideal/X)|R_{\aideal}|^4
 \ll_{K,\Om,W,\beta}XN^{2+o(1)}.
\end{equation}
\end{lemma}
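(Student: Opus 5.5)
Expanding the fourth power, and using that the $\chi_\aideal$ are real and completely multiplicative on ideals, I would write
\[
 |R_\aideal|^4=\sum_{\mideal_1,\ldots,\mideal_4\in\mathcal M}
 \chi_\aideal(\mideal_1\mideal_2\mideal_3\mideal_4).
\]
I then exchange the order of summation and apply Proposition~\ref{prop:orthogonality} to each quadruple. The weight is $w(t)=W(t/X)$, which is supported in $[X,2X]$ and has $\|w\|_{*,X}\ll_W1$. The ideal $\nideal=\mideal_1\mideal_2\mideal_3\mideal_4$ is coprime to $S$ because each $\mideal_i$ is. The main term appears only when $\nideal$ is a square. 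It equals $\kappa_\Om h_K(\nideal)XI_W\leq\kappa_\Om XI_W$, since $0<h_K\leq1$. Summing it over quadruples gives at most
\[
 \kappa_\Om I_WX\,E_\square(\mathcal M)\leq XN^{2+\omega(N)}=XN^{2+o(1)}.
\]

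The error terms are bounded by $X^{1/2+\eps}F_\eps(\nideal_0)G_\eps(\nideal_1)$ for each of the $N^4$ quadruples. I would control them with Lemma~\ref{lem:FG-uniform}, taking $\uideal=(1)$ and $\videal=\nideal$. Every prime divisor of $\nideal$ has norm at most
\[
 y_{\mathcal M}\leq(\log N)^{1+\omega(N)}\leq(\log X)^{1+\omega(N)},
\]
and this is at most $(\log X)^{1+\eps/4}$ once $X$ is large, because $\omega(N)\to0$. The lemma then gives $F_\eps G_\eps=X^{o(1)}$ uniformly over all quadruples, regardless of multiplicities. Its proof also handles the case where $\uideal$ has bounded norm, so even $\mideal_i$ of large norm cause no trouble; this is why the lemma was stated in a form insensitive to prime multiplicities. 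The total error is therefore
\[
 O\bigl(N^4X^{1/2+\eps+o(1)}\bigr)=O\bigl(XN^2\cdot N^2X^{-1/2+\eps+o(1)}\bigr).
\]

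The main point is to make this error smaller than the main term. Since $N^2\leq X^{2\beta}$ with $\beta<1/4$, we have $N^2X^{-1/2+\eps}\leq X^{2\beta-1/2+\eps}$. I would fix $\eps=(1/4-\beta)/2$ (which is less than $1/2$, as the proposition requires), so that the exponent is negative and the error is $o(XN^2)$. Adding the two contributions gives \eqref{eq:nf-fourth-moment}, with implied constant depending on $K,\Om,W$, and on $\beta$ through $\eps$. The only delicate check is the uniformity of the $F_\eps G_\eps$ bound, and Lemma~\ref{lem:FG-uniform} supplies exactly that.
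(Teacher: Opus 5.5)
Your proposal is correct and follows the paper's proof essentially step for step: you expand $R_\aideal^4$, apply Proposition~\ref{prop:orthogonality}, bound the square-diagonal main term by $\kappa_\Om I_W X\,E_\square(\mathcal M)$ using $h_K\le1$, and control the error uniformly via Lemma~\ref{lem:FG-uniform} with $\uideal=(1)$. Your choice $\eps=(1/4-\beta)/2$ lies inside the paper's range $0<\eps<1/2-2\beta$, so the error is $o(XN^2)$ exactly as in the paper.
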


\begin{proof}
The fourth power counts quadruples of resonator ideals.
The character average distinguishes those whose product is a square.
Since $|R_{\aideal}|^4=R_{\aideal}^4$,
Proposition~\ref{prop:orthogonality} gives
\begin{align}
 &\sum_{\aideal\in\Om}W(\Norm\aideal/X)|R_{\aideal}|^4\notag\\
 &\quad=\kappa_{\Om}I_WX
 \sum_{\substack{\mideal_1,\ldots,\mideal_4\in\mathcal M\\
 \mideal_1\mideal_2\mideal_3\mideal_4=\square}}
 h_K(\mideal_1\mideal_2\mideal_3\mideal_4)
 +O\left(X^{1/2+\eps+o(1)}N^4\right).
 \label{eq:nf-fourth-expanded}
\end{align}
Here Lemma~\ref{lem:FG-uniform} applies with $\uideal=(1)$.
Indeed, for every fixed $\eps>0$, the support hypothesis gives
$y_{\mathcal M}\leq(\log X)^{1+\eps/4}$ once $X$ is large.
The integral main term is at most
$\kappa_{\Om}I_WX E_\square(\mathcal M)$, since $0<h_K\leq1$.
Choose $0<\eps<1/2-2\beta$.
The error divided by $XN^2$ is
\[
 O\left(X^{-1/2+\eps+o(1)}N^2\right)
 =O\left(X^{-1/2+\eps+2\beta+o(1)}\right)=o(1).
\]
The assumed energy bound now proves the lemma.
\end{proof}

\begin{proof}[Proof of Theorem~\ref{thm:central-max}]
Fix $A_0\geq0$ and $0<\beta<1/4$.
Throughout the proof, $0\leq A\leq A_0$ and
\[
 N=\lfloor X^\beta\rfloor,\qquad
 \delta=\frac{A}{\log_2X},\qquad s=\frac12+\delta.
\]
Take $\mathcal M$ from Proposition~\ref{prop:gal}, with the
common support and energy bounds in \eqref{eq:common-energy-omega}.
Define
\[
 S_0:=\sum_{\aideal\in\Om}W(\Norm\aideal/X)R_{\aideal}^2,
 \qquad
 S_1(A):=\sum_{\aideal\in\Om}W(\Norm\aideal/X)
 L(s,\chi_{\aideal})R_{\aideal}^2.
\]
Write $S_1=S_1(0)$ at the central point.

For squarefree ideals $\mideal,\nideal$, their product is a square
if and only if $\mideal=\nideal$.
Expand $S_0$ and apply Proposition~\ref{prop:orthogonality}
and Lemma~\ref{lem:FG-uniform}.  This gives
\[
 S_0=\kappa_{\Om}I_WX
 \sum_{\mideal\in\mathcal M}h_K(\mideal^2)
 +O\left(X^{1/2+\eps+o(1)}N^2\right).
\]
We fix
$0<\eps<\min\{1/4-\beta,1/4\}$ for the rest of the moment calculation.
Since $h_K\leq1$, we obtain
\begin{equation}
 S_0\leq(\kappa_{\Om}I_W+o(1))XN.
 \label{eq:S0-central}
\end{equation}

The first moment contains a second ideal sum from the approximate
functional equation.  Its weights must retain their decay when we
apply the character-average error bound.
For $t>0$, put
\[
 \mathcal Q(t):=|D_K|\Norm\cideal_{\Om}\,t.
\]
The two test functions in \eqref{eq:shifted-afe} are
\begin{align*}
 w^+_{\kideal,X,A}(t)
 &:=W(t/X)V_{K,\delta}
       \left(\frac{\Norm\kideal}{\sqrt{\mathcal Q(t)}}\right),\\
 w^-_{\kideal,X,A}(t)
 &:=W(t/X)\mathcal Q(t)^{-\delta}
 \frac{\gamma_K(1/2-\delta)}{\gamma_K(1/2+\delta)}
 V_{K,-\delta}
       \left(\frac{\Norm\kideal}{\sqrt{\mathcal Q(t)}}\right).
\end{align*}
For large $X$, we have $0\leq\delta\leq\delta_0<1/4$, where
$\delta_0$ is fixed.  Differentiating
$\Norm\kideal/\sqrt{\mathcal Q(t)}$ multiplies its logarithmic
derivatives by fixed powers of $t^{-1}$.
Moreover,
$t^j(d/dt)^j\mathcal Q(t)^{-\delta}=O_j(X^{-\delta})$
on $[X,2X]$, uniformly in this range of $\delta$.
Thus \eqref{eq:shifted-kernel-decay} gives, for every fixed $B>0$,
\begin{align*}
 \|w^+_{\kideal,X,A}\|_{*,X}
 &\ll_{K,W,A_0,B}
       (1+\Norm\kideal/\sqrt X)^{-B},\\
 \|w^-_{\kideal,X,A}\|_{*,X}
 &\ll_{K,W,A_0,B}X^{-\delta}
       (1+\Norm\kideal/\sqrt X)^{-B}.
\end{align*}
The ideal-counting bound
$\#\{\kideal:\Norm\kideal\leq t\}\ll_Kt$
and partial summation imply
\[
 \sum_{\kideal}(\Norm\kideal)^{-a}
 (1+\Norm\kideal/\sqrt X)^{-B}
 \ll_{K,B}X^{(1-a)/2}
 \qquad(1/4\leq a\leq3/4, B\geq2).
\]
Consequently,
\begin{equation}\label{eq:critical-first-length}
 \sum_{\kideal}(\Norm\kideal)^{-1/2-\delta}
 \|w^+_{\kideal,X,A}\|_{*,X}
 \ll X^{1/4-\delta/2}\leq X^{1/4},
\end{equation}
and
\begin{equation}\label{eq:critical-dual-length}
 \sum_{\kideal}(\Norm\kideal)^{-1/2+\delta}
 \|w^-_{\kideal,X,A}\|_{*,X}
 \ll X^{-\delta}X^{1/4+\delta/2}\leq X^{1/4}.
\end{equation}
These estimates concern the seminorms themselves, so they apply
directly to the errors in Proposition~\ref{prop:orthogonality}.

We may first truncate both ideal sums at $\Norm\kideal\leq X^2$.
Indeed, the decay estimate with $B$ sufficiently large gives
\[
 \sum_{\Norm\kideal>X^2}
 (\Norm\kideal)^{-1/2+\delta}
 (1+\Norm\kideal/\sqrt X)^{-B}
 \ll X^{B/2+2(1/2+\delta-B)}.
\]
Since $\#\Om(X)\ll X$ and $|R_{\aideal}|\leq N$, the contribution
of these tails to $S_1(A)$ is $O(X^{-100})$ after increasing $B$.
For the remaining ideals, Lemma~\ref{lem:FG-uniform} applies with
$\uideal=\kideal$ and $\videal=\mideal\nideal$, taking $B_0=2$.
More explicitly, if $\kideal\mideal\nideal=\lideal_0\lideal_1^2$
with $\lideal_0$ squarefree, there is a constant $C_{K,\eps}$ such that
\[
 \sup_{\substack{\Norm\kideal\leq X^2\\
                  \mideal,\nideal\in\mathcal M}}
 F_\eps(\lideal_0)G_\eps(\lideal_1)
 \leq\exp\{C_{K,\eps}(\log X)^{1-\eps/2}\}=X^{o(1)}.
\]
This bound uses only the common prime support of $\mathcal M$, so it
also holds uniformly when $\mathcal M$ varies with $0\leq A\leq A_0$.
It can therefore be taken outside the whole truncated ideal sum and
both resonator sums before applying
\eqref{eq:critical-first-length}--\eqref{eq:critical-dual-length}.
The same bound, with $\uideal=(1)$, applies to all products of two
or four resonator ideals in $S_0$ and the fourth moment. No assertion
of uniformity over the untruncated $\kideal$-sum is needed: its tail
was bounded absolutely above before applying character orthogonality.
Terms divisible by a prime in $S$ vanish by
\eqref{eq:bad-prime-terms-vanish}.
Equations \eqref{eq:critical-first-length} and
\eqref{eq:critical-dual-length} therefore bound the combined
character-average error by
\begin{equation}\label{eq:critical-average-error}
 O_{K,\Om,W,A_0,\eps}
       \left(X^{3/4+\eps+o(1)}N^2\right)=o(XN).
\end{equation}
The last equality uses $\eps<1/4-\beta$.

Both square-diagonal main terms are nonnegative.
This follows from the positivity of the kernels, the dual factor
\eqref{eq:dual-factor}, and $h_K$.
In the first main term, retain, for each pair
$\mideal,\nideal\in\mathcal M$, only
\[
 \kideal=\frac{[\mideal,\nideal]}{(\mideal,\nideal)}.
\]
This choice makes
$\kideal\mideal\nideal=[\mideal,\nideal]^2$ and gives the weight
\[
 (\Norm\kideal)^{-s}
 =\left(\frac{\Norm(\mideal,\nideal)}
              {\Norm[\mideal,\nideal]}\right)^{1/2+\delta}.
\]
It is therefore exactly the kernel in the moving G\'al sum.
The support of $\kideal\mideal\nideal$ lies among primes of norm
at most $y_{\mathcal M}$.
The prime ideal theorem and partial summation give
$\sum_{\Norm\p\leq y}(\Norm\p)^{-1}\leq\log\log y+O_K(1)$
for $y\geq3$.
Using $\log(1+u)\leq u$, we obtain
\begin{equation}\label{eq:h-loss}
 h_K(\kideal\mideal\nideal)
 \geq\prod_{\substack{\Norm\p\leq y_{\mathcal M}\\\p\notin S}}
       (1+(\Norm\p)^{-1})^{-1}
 \geq\exp\{-O_K(\log_3N)\}
 =\exp\{-o(\mathcal V(N))\}.
\end{equation}

Retain only pairs for which $\Norm\kideal\leq X^\eps$.
Since $\eps<1/2$, \eqref{eq:shifted-kernel-small} gives
\[
 \int_X^{2X}w^+_{\kideal,X,A}(t)\,dt
 =XI_W\left(1+O\left(X^{-\rho(1/2-\eps)}\right)\right)
\]
for any fixed $0<\rho<1/2-\delta_0$.
Lemma~\ref{lem:gal-truncation} supplies the sum of their weights.
Since
$\mathcal V(N)=(\sqrt\beta+o(1))\mathcal V(X)$,
we conclude that
\begin{equation}\label{eq:critical-S1}
 S_1(A)\geq XN
 \exp\{(2e^{-A}\sqrt\beta+o(1))\mathcal V(X)\}.
\end{equation}
At $A=0$, both approximate-functional-equation sums are equal.
Keeping both gives the more precise central estimate
\begin{equation}\label{eq:S1-central}
 S_1\geq2\kappa_{\Om}I_WXN
 \exp\{(2+o(1))\mathcal V(N)\}.
\end{equation}
All error terms above are uniform for $0\leq A\leq A_0$.

The lower bound for $S_1(A)$ is positive, so $S_0>0$.
Its quotient by $S_0$ is a weighted average of the real values
$L(s,\chi_{\aideal})$.  Hence
\[
 \max_{\aideal\in\Om(X)}L(s,\chi_{\aideal})
 \geq\frac{S_1(A)}{S_0}
 \geq\exp\{(2e^{-A}\sqrt\beta+o(1))\mathcal V(X)\}.
\]
For every fixed $\beta<1/4$, this holds uniformly on $[0,A_0]$.
Given any $\tau>0$, first take $\beta$ close enough to $1/4$ that
$2\sqrt\beta>1-\tau$, and then take $X$ sufficiently large.  The following choice
realizes the displayed $o(1)$ as a single function of $X$. If one takes a
sequence $\beta_j\uparrow1/4$ and chooses increasing thresholds on
which the estimate for $\beta_j$ is uniform in $0\le A\le A_0$, the
resulting piecewise-constant choice gives a coefficient
$e^{-A}+o(1)$ uniformly on the compact $A$-interval.  This proves the
maximum assertion, including its uniformity.

For abundance, fix $0<c<1$ and choose
\begin{equation}\label{eq:central-beta-choice}
 \frac{c^2}{4}<\beta<\frac14.
\end{equation}
Let
\[
 \mathcal H_{A,c}(X):=\{\aideal\in\Om(X):
 L(s,\chi_{\aideal})\geq e^{ce^{-A}\mathcal V(X)}\}.
\]
Outside this set, the first moment contribution is at most
$e^{ce^{-A}\mathcal V(X)}S_0$.
This upper bound also holds for negative $L$-values.
By \eqref{eq:S0-central}, we have $S_0\leq C XN$ with a fixed
constant $C$.
The discarded contribution divided by the lower bound in
\eqref{eq:critical-S1} is therefore at most
\[
 C\exp\{-[e^{-A}(2\sqrt\beta-c)+o(1)]\mathcal V(X)\}=o(1).
\]
The gap is bounded below by
$e^{-A_0}(2\sqrt\beta-c)>0$, so this estimate is uniform in $A$.
It follows that
\begin{equation}\label{eq:high-mass}
 \sum_{\aideal\in\mathcal H_{A,c}(X)}W(\Norm\aideal/X)
 L(s,\chi_{\aideal})R_{\aideal}^2
 \geq XN\exp\{(2e^{-A}\sqrt\beta+o(1))\mathcal V(X)\}.
\end{equation}

Put $U_K(X):=\exp\{C_K\log X/\log_2X\}$, as in
Lemma~\ref{lem:central-upper}.
The $L$-values on $\mathcal H_{A,c}(X)$ are positive and at most $U_K(X)$.
Cauchy--Schwarz and Lemma~\ref{lem:nf-fourth-moment} give
\begin{align*}
 &\sum_{\aideal\in\mathcal H_{A,c}(X)}W(\Norm\aideal/X)
 L(s,\chi_{\aideal})R_{\aideal}^2\\
 &\quad\leq U_K(X)
 \left(\sum_{\aideal\in\mathcal H_{A,c}(X)}W(\Norm\aideal/X)\right)^{1/2}
 \left(\sum_{\aideal\in\Om}W(\Norm\aideal/X)|R_{\aideal}|^4\right)^{1/2}\\
 &\quad\ll U_K(X)\,\#\mathcal H_{A,c}(X)^{1/2}
 X^{1/2}N X^{o(1)}.
\end{align*}
Comparing this with \eqref{eq:high-mass} and squaring yields
\[
 \#\mathcal H_{A,c}(X)
 \geq XU_K(X)^{-2}X^{-o(1)}
 \exp\{(4e^{-A}\sqrt\beta+o(1))\mathcal V(X)\}
 =X^{1-o(1)}.
\]
Here both $\log U_K(X)$ and $\mathcal V(X)$ are $o(\log X)$. In
particular
\[
 \log\!\left(U_K(X)^{-2}\exp\{O(\mathcal V(X))\}\right)=o(\log X),
\]
which justifies absorbing all remaining factors into $X^{o(1)}$.
This proves the second assertion, uniformly on $[0,A_0]$.

Finally, we choose one threshold approaching the endpoint.
For $j\geq1$, put
\[
 c_j:=1-\frac{3}{10j},\qquad
 \beta_j:=\frac14-\frac1{10j}.
\]
Writing $u=1/(10j)$, we have
$c_j^2-4\beta_j=u(9u-2)<0$.
The fixed-parameter abundance estimate therefore applies to
$(c_j,\beta_j)$.
Choose an increasing sequence $X_j\to\infty$ such that, for all
$X\geq X_j$ and $0\leq A\leq A_0$, it gives
$\#\mathcal H_{A,c_j}(X)\geq X^{1-1/j}$.
For $X_j\leq X<X_{j+1}$, define
\[
 \eta_{A_0}(X):=1-c_j=\frac3{10j}.
\]
Then $\eta_{A_0}(X)\to0$, and the asserted count is at least
$X^{1-1/j}=X^{1-o(1)}$ uniformly on $[0,A_0]$. Notice that this argument
produces one sufficiently slowly decreasing function $\eta_{A_0}$. It
does not assert the conclusion for an arbitrary prescribed function
$\eta(X)\to0$, nor does it give an effective decay rate. Taking $A_0=0$
gives the central endpoint statement with $\eta(X)=\eta_0(X)$.
\end{proof}

\section{Global function fields}\label{sec:function-field}

Let $C/\mathbb F_q$ be a fixed smooth, projective, geometrically connected
curve, where $q$ is odd, and put $F=\mathbb F_q(C)$.  Write $g_C$ for its
genus.  All constants in this section may depend on this curve and on the
fixed place chosen below. The fixed local condition determines the
family in which we prove the prime average. With this estimate, the
resonance argument uses the finite block calculation from
Section~\ref{sec:gal}.

\subsection{The family and its prime averages}

Choose a place $\infty$ of odd degree $d_\infty$.
Such a place exists by the prime-divisor theorem
\cite[Theorem~5.12]{Rosen}.  Put
\[
 \mathscr A=\Gamma(C\setminus\{\infty\},\mathcal O_C),
 \qquad S=\{\infty\},\qquad
 h_{\mathscr A}=|\operatorname{Cl}(\mathscr A)|.
\]
The ring $\mathscr A$ is Dedekind, and its units are $\mathbb F_q^\times$.
We identify its prime ideals with the places different from $\infty$.
If $\mathfrak a=\prod_v\mathfrak p_v^{e_v}$ is a nonzero integral ideal,
define $\deg\mathfrak a=\sum_v e_v\deg v$ and
$|\mathfrak a|=q^{\deg\mathfrak a}$.
Thus sums over ideals are sums over effective divisors away from $\infty$.

Fix a uniformizer $\varpi_\infty$, and define the leading coefficient
\[
 \operatorname{sgn}_\infty(f)
 =\overline{f\varpi_\infty^{-v_\infty(f)}}
 \in\mathbb F_{q^{d_\infty}}^\times
 \qquad(f\in F^\times).
\]
Since $d_\infty$ is odd, a constant is a square in this residue field
if and only if it is a square in $\mathbb F_q$.
For degrees
\begin{equation}\label{eq:ff-admissible-degrees}
 n=d_\infty(2r+1),\qquad r\geq0,
\end{equation}
let $\Om_F(n)$ be the principal prime ideals of $\mathscr A$ of degree $n$.
For $\mathfrak P\in\Om_F(n)$, choose a generator $\pi_{\mathfrak P}$ with
\begin{equation}\label{eq:ff-principal-generator-divisor}
 (\pi_{\mathfrak P})
 =\mathfrak P-\frac n{d_\infty}\infty,
 \qquad
 \operatorname{sgn}_\infty(\pi_{\mathfrak P})
 \in\mathbb F_{q^{d_\infty}}^{\times2}.
\end{equation}
Multiplication by a nonsquare constant changes the square class of the
leading coefficient.  Two generators satisfying this condition differ by
a square constant.  Hence the quadratic extension
$F(\sqrt{\pi_{\mathfrak P}})/F$ is well defined.
Let $\chi_{\mathfrak P}$ be its nontrivial Artin character.
Its coefficients at ramified places are defined to be zero.

This normalization serves two purposes.  It fixes the ramification at
$\infty$, and it makes evaluation at a fixed ideal into a ray class
character of the varying prime.  The latter property gives an error
depending only linearly on the degree of the fixed ideal.

The following prime estimate for geometrically nontrivial characters
gives the family count without using the later orthogonality proposition.

\begin{lemma}\label{lem:ff-geometric-prime-bound}
Let $\theta$ be a geometrically nontrivial finite-order idele class
character of $F$. Then, uniformly for $n\geq1$,
\begin{equation}\label{eq:ff-prime-character-bound}
 \sum_{\substack{\deg\mathfrak P=n\\\mathfrak P\ne\infty}}
       \theta(\mathfrak P)
 \ll_F\frac{(1+\deg\mathfrak f_\theta)q^{n/2}}n.
\end{equation}
\end{lemma}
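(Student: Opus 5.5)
The plan is to use Weil's theorem, i.e.\ the Riemann hypothesis for curves, applied to the Hecke $L$-function of $\theta$, together with the standard explicit formula over function fields. Since $\theta$ has finite order and is geometrically nontrivial, its twists $\theta|\cdot|^{s}$ are never trivial. The completed $L$-function $L(u,\theta)$, with $u=q^{-s}$, is therefore a polynomial in $u$ of degree
\[
 D=2g_C-2+\deg\mathfrak f_\theta,
\]
where $\mathfrak f_\theta$ is the full conductor divisor, including $\infty$ if $\theta$ is ramified there. This follows from the Grothendieck--Ogg--Shafarevich or Tate's thesis degree formula; see \cite{Rosen}. By Weil all inverse roots $\alpha_j$ have $|\alpha_j|=q^{1/2}$.

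Take the logarithmic derivative of $\prod_v(1-\theta(v)u^{\deg v})^{-1}=\prod_j(1-\alpha_ju)$, with the convention $\theta(v)=0$ at ramified $v$, and compare coefficients of $u^m$. This gives
\[
 \sum_{d\mid m}\ \sum_{\deg v=d} d\,\theta(v)^{m/d}=-\sum_{j=1}^D\alpha_j^m .
\]
The right side is bounded by $Dq^{m/2}$.

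Apply this with $m=n$ and isolate the term $d=n$. The contributions with $d\le n/2$ involve at most $\sum_{d\le n/2}\#\{v:\deg v=d\}\,d\ll_F q^{n/2}$ terms of modulus one, by the prime-divisor count $\#\{\deg v=d\}\ll q^d/d$. The place $\infty$ contributes at most $d_\infty$ if $d_\infty\mid n$. Hence
\[
 n\Bigl|\sum_{\deg\mathfrak P=n,\ \mathfrak P\ne\infty}\theta(\mathfrak P)\Bigr|
 \le Dq^{n/2}+O_F(q^{n/2})+d_\infty .
\]
Divide by $n$ and use $D\ll_F 1+\deg\mathfrak f_\theta$ to get \eqref{eq:ff-prime-character-bound}. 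The inequality holds uniformly in $n\ge1$ because no step used $n$ large.

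The main obstacle is the assertion that $L(u,\theta)$ is a polynomial of exactly the stated degree with every inverse root of modulus $q^{1/2}$. This needs geometric nontriviality: if $\theta$ were $|\cdot|^{it}$ times a trivial character, zeta-type poles at $|u|=1,q^{-1}$ would contribute a main term of size $q^n$.

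To justify it, view $\theta$ as a character of a finite abelian extension of $F\mathbb F_{q^k}$ via class field theory. The Artin $L$-function of a nontrivial irreducible character of the geometric Galois group is a polynomial. Its degree comes from the functional equation, and its roots are pure of weight one by Weil, so $|\alpha_j|=q^{1/2}$.

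A subtle point is a character that is trivial on $\deg^{-1}(0)$ but nontrivial overall. Such a $\theta$ is geometrically trivial and is excluded by hypothesis, so no pole term appears.
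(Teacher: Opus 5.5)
Your proposal is correct and follows essentially the paper's argument: the complete $L$-function is a polynomial of degree $2g_C-2+\deg\mathfrak f_\theta$, Weil's theorem gives inverse roots of modulus $q^{1/2}$, and the logarithmic-derivative identity at degree $n$ finishes the bound once the prime powers and $\infty$ are removed. The only difference is the reduction to a geometric extension. The paper twists by $\theta(D)^{-\deg}$ for a degree-one divisor $D$, applies Weil to that twist and rotates the roots back, whereas you base change to the constant field extension $F\mathbb F_{q^k}$, which works equally well.
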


\begin{proof}
The complete $L$-function of $\theta$ is a polynomial of degree
$D_\theta=2g_C-2+\deg\mathfrak f_\theta$. See
\cite[Theorem~9.24A, p.~141]{Rosen}. To invoke Weil's theorem in a
manifestly geometric form, choose a degree-one divisor $D$ whose support
is disjoint from the conductor and put
$\widetilde\theta=\theta\,\theta(D)^{-\deg}$.
We do not require a degree-one place: by the
prime-divisor theorem, for all sufficiently large $m$ there are places
$P_m$ and $P_{m+1}$ of degrees $m$ and $m+1$, and they may be chosen
outside the finite conductor. Then $D=P_{m+1}-P_m$ has degree one.
The twisted character agrees with $\theta$ on degree-zero idele
classes and satisfies $\widetilde\theta(D)=1$. Every idele class of
degree $m$ is the product of a degree-zero class and $D^m$, so the
image of $\widetilde\theta$ equals its image on the degree-zero subgroup.
Thus the cyclic extension associated with $\widetilde\theta$ has
no nontrivial constant-field quotient and is geometric. See
\cite[Proposition~9.22]{Rosen}. Its character remains nontrivial.
This conclusion concerns the twisted character: geometric
nontriviality alone need not make the original cyclic extension
geometric before removal of its degree component.
Therefore Weil's theorem \cite[Theorem~9.16B, p.~129]{Rosen} gives
inverse roots of $L(u,\widetilde\theta)$ of absolute value $q^{1/2}$.
Undoing the degree twist only rotates these roots, so the inverse roots
$\alpha_{\theta,j}$ of $L(u,\theta)$ have the same absolute value.

Logarithmic differentiation gives
\[
 \sum_{\deg\mathfrak d=n}\Lambda_F(\mathfrak d)\theta(\mathfrak d)
 =-\sum_{j=1}^{D_\theta}\alpha_{\theta,j}^n.
\]
The left side runs over all effective divisors on $C$, with
$\Lambda_F(v^k)=\deg v$ on prime powers and zero otherwise.
Removing prime powers of exponent at least two costs $O_F(q^{n/2})$:
prime squares contribute $O_F(q^{n/2})$, while higher powers contribute
$O_F(nq^{n/3})=O_F(q^{n/2})$. Removing the fixed place $\infty$ costs only $O_F(1)$ (its von
Mangoldt weight is $d_\infty$ when it occurs). Division by $n$ proves
\eqref{eq:ff-prime-character-bound}.
\end{proof}

\begin{proposition}\label{prop:ff-positive-local-component}
For the degrees in \eqref{eq:ff-admissible-degrees}, the following hold.
\begin{enumerate}
\item Each $\chi_{\mathfrak P}$ has conductor
\begin{equation}\label{eq:ff-positive-local-conductor}
 \mathfrak f_{\mathfrak P}=\mathfrak P+\infty.
\end{equation}
In particular, its coefficient at $\infty$ is zero.
\item With $\kappa_F=d_\infty/h_{\mathscr A}>0$, one has
\begin{equation}\label{eq:ff-prime-count}
 \#\Om_F(n)=\kappa_F\frac{q^n}{n}
 +O_F\left(\frac{q^{n/2}}n\right).
\end{equation}
\end{enumerate}
\end{proposition}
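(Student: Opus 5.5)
Part (i) is a local computation at each place, done in three cases. Take first a place $v\ne\infty$ with $v\neq\mathfrak P$. By \eqref{eq:ff-principal-generator-divisor} the valuation $v(\pi_{\mathfrak P})=0$. Since $q$ is odd, the extension $F_v(\sqrt{\pi_{\mathfrak P}})$ is unramified at $v$, so $v$ does not divide the conductor. At $v=\mathfrak P$ the valuation is $1$, so the extension is ramified there. It is tamely ramified because the residue characteristic is odd, and therefore the conductor exponent at $\mathfrak P$ is exactly one.

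At $\infty$ the valuation is $v_\infty(\pi_{\mathfrak P})=-n/d_\infty=-(2r+1)$, which is odd. Hence $\infty$ ramifies, again tamely since $q$ is odd, and the conductor exponent at $\infty$ is one. This gives $\mathfrak f_{\mathfrak P}=\mathfrak P+\infty$, and the coefficient at $\infty$ is zero by the stated convention. The sign normalization plays no role in part (i); it is only needed later, for well-definedness and for reciprocity.

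For part (ii), the plan is to detect principality by characters of $\operatorname{Cl}(\mathscr A)$. The basic exact sequence is
\[
0\to\mathbb Z/(\text{index})\to\operatorname{Pic}^0(C)\to\operatorname{Cl}(\mathscr A)\to\mathbb Z/d_\infty\to0,
\]
which identifies $\operatorname{Cl}(\mathscr A)\cong\operatorname{Pic}(C)/\langle\infty\rangle$. Each character $\xi$ of this finite group pulls back to an unramified idele class character $\theta_\xi$ of $F$ that is trivial on $\infty$. Orthogonality then gives
\[
\#\Om_F(n)=\frac1{h_{\mathscr A}}\sum_{\xi}\sum_{\substack{\deg\mathfrak P=n\\ \mathfrak P\neq\infty}}\xi(\mathfrak P).
\]

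The characters $\xi$ split into two kinds. Suppose first that $\theta_\xi$ factors through the degree map, say $\theta_\xi=z^{\deg}$. Triviality on $\infty$ forces $z^{d_\infty}=1$, so there are exactly $d_\infty$ such characters. For these, $\xi(\mathfrak P)=z^{n}$, and this equals $1$ because $d_\infty\mid n$. Each of them therefore contributes the full prime count $\pi_F(n)$. The prime number theorem for function fields (via the Weil bound for $Z_C$) gives
\[
\pi_F(n)=\frac{q^n}{n}+O_F\!\left(\frac{q^{n/2}}{n}\right),
\]
and removing $\infty$ changes this by at most one. Multiplying by $d_\infty/h_{\mathscr A}$ gives the main term with $\kappa_F=d_\infty/h_{\mathscr A}$.

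Every other $\xi$ is geometrically nontrivial and unramified, so $\deg\mathfrak f_\theta=0$. Lemma~\ref{lem:ff-geometric-prime-bound} then bounds its prime sum by $O_F(q^{n/2}/n)$. There are $h_{\mathscr A}=O_F(1)$ characters in total, so these contributions add up to the stated error.

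The main obstacle is the bookkeeping in the first kind of character: one must check that the characters trivial on $\operatorname{Pic}^0$-image modulo $\infty$ are exactly the $d_\infty$ characters $z^{\deg}$ with $z^{d_\infty}=1$. This amounts to showing $\operatorname{Cl}(\mathscr A)/(\text{image of }\operatorname{Pic}^0(C))\cong\mathbb Z/d_\infty$, which uses the existence of a divisor of degree one on $C$ (F.K.\ Schmidt). With that identification, the count $d_\infty$ of characters with $\xi(\mathfrak P)=1$ for every prime of degree $n$ follows directly.
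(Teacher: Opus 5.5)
Your proof is correct and follows the paper's argument. For (i) you do the same tame local computation: odd valuation at $\mathfrak P$ and at $\infty$, valuation zero elsewhere. For (ii) you use the same orthogonality over the characters of $\operatorname{Cl}(\mathscr A)$: exactly $d_\infty$ degree characters (via a degree-one divisor) give the main term, and the remaining geometrically nontrivial unramified characters are bounded by Lemma~\ref{lem:ff-geometric-prime-bound}.

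The only slip is cosmetic. The left-hand term of your exact sequence should just be $0$: $\langle[\infty]\rangle$ meets $\operatorname{Pic}^0(C)$ trivially, so $\operatorname{Pic}^0(C)\to\operatorname{Cl}(\mathscr A)$ is injective. This does not affect the argument.
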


\begin{proof}
The valuations of $\pi_{\mathfrak P}$ are odd at $\mathfrak P$ and
$\infty$, and zero elsewhere.  Quadratic Kummer ramification is tame
because $q$ is odd.  The conductor exponent is one at these two places
and zero elsewhere.  This proves the conductor formula.

Extend each character $\xi$ of $\operatorname{Cl}(\mathscr A)$ to
divisors on $C$ by setting $\xi(\infty)=1$.
The extension is trivial on principal divisors, and so is an unramified
idele class character.
Exactly $d_\infty$ of these characters factor through the degree map:
they are $\xi(\mathfrak d)=\zeta^{\deg\mathfrak d}$ with
$\zeta^{d_\infty}=1$.
Indeed, the prime-divisor theorem supplies places of two consecutive
sufficiently large degrees, whose difference is a divisor of degree one.
Thus the degree map on divisor classes is onto $\mathbb Z$, and after
quotienting by the class of $\infty$ the degree map on
$\operatorname{Cl}(\mathscr A)$ has image
$\mathbb Z/d_\infty\mathbb Z$.
Every other character is geometrically nontrivial and satisfies
Lemma~\ref{lem:ff-geometric-prime-bound} with conductor zero. Since
$d_\infty\mid n$, each of the $d_\infty$ degree characters takes the
value one on every prime of degree $n$. Character orthogonality and
\cite[Theorem~5.12]{Rosen} now give
\begin{align*}
 \#\Om_F(n)
 &=\frac1{h_{\mathscr A}}
   \sum_\xi\sum_{\substack{\deg\mathfrak P=n\\\mathfrak P\ne\infty}}
                        \xi(\mathfrak P)\\
 &=\frac{d_\infty}{h_{\mathscr A}}
      \left(\frac{q^n}{n}+O_F(q^{n/2}/n)\right)
       +O_F(q^{n/2}/n).
\end{align*}
This proves \eqref{eq:ff-prime-count}.
\end{proof}

\begin{proposition}\label{prop:ff-orthogonality}
Let $\mathfrak l$ be a nonzero integral ideal of $\mathscr A$ having no
prime factor of degree $n$.  Write
$\mathfrak l=\mathfrak l_0\mathfrak l_1^2$, with $\mathfrak l_0$
squarefree.  Uniformly in $\mathfrak l$,
\begin{equation}\label{eq:ff-orthogonality}
 \sum_{\mathfrak P\in\Om_F(n)}\chi_{\mathfrak P}(\mathfrak l)
 =\kappa_F\frac{q^n}{n}\one_{\mathfrak l_0=\mathscr A}
 +O_F\left(\frac{(1+\deg\mathfrak l_0)q^{n/2}}n\right).
\end{equation}
\end{proposition}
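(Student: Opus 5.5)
The plan is to use quadratic reciprocity over $F$ to turn $\chi_{\mathfrak P}(\mathfrak l)$ into a character of the varying prime $\mathfrak P$, namely a ray class character of conductor dividing $\mathfrak l_0$, possibly times a fixed sign. Then we sum over principal primes of degree $n$ with the class group orthogonality of Proposition~\ref{prop:ff-positive-local-component}, and bound the nonprincipal terms by Lemma~\ref{lem:ff-geometric-prime-bound}.

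\textbf{Step 1 (square part).} Since $\mathfrak l$ has no prime factor of degree $n$, it is coprime to every $\mathfrak P\in\Om_F(n)$. Each prime of $\mathfrak l$ is unramified in $F(\sqrt{\pi_{\mathfrak P}})$, because it differs from $\mathfrak P$ and $\infty$. Hence $\chi_{\mathfrak P}(\mathfrak l_1^2)=1$ and $\chi_{\mathfrak P}(\mathfrak l)=\chi_{\mathfrak P}(\mathfrak l_0)$. If $\mathfrak l_0=\mathscr A$, the sum is $\#\Om_F(n)$, and \eqref{eq:ff-prime-count} gives the main term with error $O(q^{n/2}/n)$.

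\textbf{Step 2 (reciprocity).} For $\p\mid\mathfrak l_0$, the value $\chi_{\mathfrak P}(\p)$ is the residue symbol $(\pi_{\mathfrak P}/\p)$. Define $\rho_{\mathfrak l_0}$ on principal ideals $(\alpha)$ coprime to $\mathfrak l_0$, with $\alpha$ normalized to have square leading coefficient at $\infty$, by $\rho_{\mathfrak l_0}((\alpha))=\prod_{\p\mid\mathfrak l_0}(\alpha/\p)$. Changing $\alpha$ by a square constant does not change this value, so it is a character of the corresponding subgroup $H$ of the ray class group of $\mathscr A$ modulo $\mathfrak l_0$, with the sign condition at $\infty$. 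Extend $\rho_{\mathfrak l_0}$ to the whole ray class group, and extend it to idele class characters $\theta$ of $F$, as in the proof of Proposition~\ref{prop:orthogonality}. The conductor of each $\theta$ divides $\mathfrak l_0+c\infty$ for a bounded $c$, so $\deg\mathfrak f_\theta\le\deg\mathfrak l_0+O_F(1)$. Each $\mathfrak P\in\Om_F(n)$ is principal with normalized generator $\pi_{\mathfrak P}$, so $\chi_{\mathfrak P}(\mathfrak l_0)=\rho_{\mathfrak l_0}(\mathfrak P)$. This is the place where the normalization of $\pi_{\mathfrak P}$ makes evaluation at a fixed ideal into a ray class character of the varying prime. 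Detect the condition that $\mathfrak P$ lies in the principal, sign-normalized class by averaging over characters $\xi$ of the finite quotient group. This gives
\[
\sum_{\mathfrak P\in\Om_F(n)}\chi_{\mathfrak P}(\mathfrak l_0)=\frac{1}{|G|}\sum_{\xi}\sum_{\deg\mathfrak P=n,\ \mathfrak P\ne\infty,\ \mathfrak P\nmid\mathfrak l_0}\xi\rho_{\mathfrak l_0}(\mathfrak P),
\]
where $|G|$ is bounded in terms of $F$ only.

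\textbf{Step 3 (nontriviality and bound).} The main obstacle is to show that, when $\mathfrak l_0\ne\mathscr A$, every $\theta=\xi\rho_{\mathfrak l_0}$ is geometrically nontrivial. Then Lemma~\ref{lem:ff-geometric-prime-bound} bounds each inner sum by $O_F((1+\deg\mathfrak l_0)q^{n/2}/n)$, and removing primes dividing $\mathfrak l_0$ does not matter, since there are none of degree $n$.

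To prove geometric nontriviality, use ramification as in the number field case. For $\p\mid\mathfrak l_0$, choose $\alpha$ congruent to $1$ modulo the other primes of $\mathfrak l_0$ and to a nonsquare modulo $\p$, with square leading coefficient. This shows that $\rho_{\mathfrak l_0}$ is ramified at $\p$. The character $\xi$ is unramified away from $\infty$, so $\theta$ is ramified at $\p$. A character of the form $\zeta^{\deg}$ is unramified everywhere, so $\theta$ cannot be one of them, and hence it is geometrically nontrivial. The number of characters $\xi$ is $O_F(1)$, so summing over them gives \eqref{eq:ff-orthogonality}.
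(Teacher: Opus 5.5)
Your proposal is correct and follows the paper's proof essentially step by step. It uses the same sign-normalized reciprocal character $\rho_{\mathfrak l_0}$ on the ray class group modulo $\mathfrak l_0$, extends it from the principal subgroup, detects principality with the $h_{\mathscr A}$ class group characters, and applies Lemma~\ref{lem:ff-geometric-prime-bound} with $\deg\mathfrak f_\theta\le\deg\mathfrak l_0+d_\infty$. The only variation is the nontriviality step: the paper exhibits a degree-zero principal ideal $(a)_{\mathscr A}$ with $v_\infty(a)=0$ on which $\rho_{\mathfrak l_0}\xi=-1$, whereas you deduce geometric nontriviality from ramification at $\p$, which is equally valid because $\mathcal O_{\p}^\times$ consists of degree-zero idele classes.
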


\begin{proof}
We construct a reciprocal character for $\mathfrak l_0$.
Let $I_{\mathscr A}(\mathfrak l_0)$ be the fractional ideals prime to
$\mathfrak l_0$, and let $P_{\mathscr A}^+(\mathfrak l_0)$ consist of
the principal ideals $(a)_{\mathscr A}$ satisfying
\[
 a\equiv1\pmod{\mathfrak l_0},\qquad
 \operatorname{sgn}_\infty(a)
       \in\mathbb F_{q^{d_\infty}}^{\times2}.
\]
The congruence requires $a$ to be a unit at every prime of the modulus.
The quotient
\begin{equation}\label{eq:ff-signed-ray-group}
 G(\mathfrak l_0)
 =I_{\mathscr A}(\mathfrak l_0)/P_{\mathscr A}^+(\mathfrak l_0)
\end{equation}
is finite.  Indeed, it maps onto $\operatorname{Cl}(\mathscr A)$, and
its kernel is a quotient of
\[
 \prod_{v\mid\mathfrak l_0}\mathbb F_{q^{\deg v}}^\times
 \times\bigl(\mathbb F_{q^{d_\infty}}^\times/
              \mathbb F_{q^{d_\infty}}^{\times2}\bigr).
\]
Let $H(\mathfrak l_0)$ be the subgroup represented by principal ideals.
For $I=(a)_{\mathscr A}$, multiply $a$ by a constant to make its leading
coefficient square, and denote the result by $a^\sharp$.
Define
\begin{equation}\label{eq:ff-reciprocal-subgroup-character}
 \lambda_{\mathfrak l_0}(I)
 =\prod_{v\mid\mathfrak l_0}
      \left(\frac{\overline{a^\sharp}_v}{v}\right)_2.
\end{equation}
Here $(\,\cdot\,/v)_2$ is the quadratic character of the residue field.
The choice of $a^\sharp$ is unique up to a square constant.
Thus the formula is well defined and multiplicative.
It is trivial on $P_{\mathscr A}^+(\mathfrak l_0)$.

A character of a subgroup of a finite abelian group extends to the group.
Choose an extension $\rho_{\mathfrak l_0}$ of
$\lambda_{\mathfrak l_0}$ to $G(\mathfrak l_0)$.
As an idele class character, it has conductor dividing
$\mathfrak l_0+\infty$.
The local Artin symbol and the degree assumption give
\begin{equation}\label{eq:ff-direct-reciprocity}
 \chi_{\mathfrak P}(\mathfrak l)
 =\prod_{v\mid\mathfrak l_0}
      \left(\frac{\overline{\pi_{\mathfrak P}}_v}{v}\right)_2
 =\rho_{\mathfrak l_0}(\mathfrak P)
 \qquad(\mathfrak P\in\Om_F(n)).
\end{equation}
Inflate the class group characters to $G(\mathfrak l_0)$.
Their orthogonality yields
\begin{equation}\label{eq:ff-orthogonality-expanded}
 \sum_{\mathfrak P\in\Om_F(n)}\chi_{\mathfrak P}(\mathfrak l)
 =\frac1{h_{\mathscr A}}
 \sum_{\xi\in\widehat{\operatorname{Cl}(\mathscr A)}}
 \sum_{\substack{\deg\mathfrak P=n\\\mathfrak P\ne\infty}}
       (\rho_{\mathfrak l_0}\xi)(\mathfrak P).
\end{equation}

Suppose $\mathfrak l_0\ne\mathscr A$, and fix $v_0\mid\mathfrak l_0$.
Weak approximation gives $a\in F^\times$ with residue $1$ at $\infty$
and at every $v\mid\mathfrak l_0$ other than $v_0$, and a nonsquare
residue at $v_0$.
Then $\lambda_{\mathfrak l_0}((a)_{\mathscr A})=-1$.
Also, $\deg(a)_{\mathscr A}=0$ because $v_\infty(a)=0$. Since
$\xi$ is inflated from the ideal class group, it is trivial on the
principal ideal $(a)_{\mathscr A}$, whereas the chosen extension satisfies
$\rho_{\mathfrak l_0}((a)_{\mathscr A})
 =\lambda_{\mathfrak l_0}((a)_{\mathscr A})=-1$. Hence
\[
 (\rho_{\mathfrak l_0}\xi)((a)_{\mathscr A})=-1.
\]
The class of $(a)_{\mathscr A}$ has degree zero, so every character
$\theta=\rho_{\mathfrak l_0}\xi$ in
\eqref{eq:ff-orthogonality-expanded} is nontrivial on the degree-zero
idele classes. This is precisely geometric nontriviality. In particular,
none of these characters factors through the degree map.
Moreover,
\[
 \deg\mathfrak f_\theta\leq\deg\mathfrak l_0+d_\infty.
\]
Applying Lemma~\ref{lem:ff-geometric-prime-bound} to each
$\theta=\rho_{\mathfrak l_0}\xi$ and using
$\deg\mathfrak f_\theta\leq\deg\mathfrak l_0+d_\infty$ in
\eqref{eq:ff-orthogonality-expanded} proves the required estimate for
nonsquares. If $\mathfrak l_0=\mathscr A$, then $\mathfrak l$ is a
square and, by the hypothesis excluding degree-$n$ prime factors,
$\chi_{\mathfrak P}(\mathfrak l)=1$ for every
$\mathfrak P\in\Om_F(n)$. Hence the left side of
\eqref{eq:ff-orthogonality} is exactly $\#\Om_F(n)$, and
\eqref{eq:ff-prime-count} proves the assertion.
\end{proof}

\subsection{The functional equation and positivity}

Put $d_n=n+d_\infty+2g_C-2$.
This is even on the degrees in \eqref{eq:ff-admissible-degrees}.
Write $L(u,\chi)$ for the polynomial variable, so that
$L(s,\chi)=L(q^{-s},\chi)$ in the complex variable $s$.

\begin{lemma}\label{lem:ff-shifted-afe}
For $\mathfrak P\in\Om_F(n)$, the following hold.
\begin{enumerate}
\item The polynomial has degree $d_n$ and satisfies
\begin{equation}\label{eq:ff-root-plus-functional-equation}
 L(u,\chi_{\mathfrak P})
 =(q^{1/2}u)^{d_n}L((qu)^{-1},\chi_{\mathfrak P}).
\end{equation}
\item For every $\delta\geq0$,
\begin{equation}\label{eq:ff-shifted-afe}
 L(1/2+\delta,\chi_{\mathfrak P})
 =\sum_{\mathfrak a}
   \frac{\chi_{\mathfrak P}(\mathfrak a)}{|\mathfrak a|^{1/2+\delta}}
   V_{d_n,\delta}(\deg\mathfrak a),
\end{equation}
where the sum is over nonzero integral ideals of $\mathscr A$, and
\begin{equation}\label{eq:ff-shifted-weight}
 V_{d,\delta}(r)=
 \begin{cases}
 1+q^{-\delta(d-2r)},&2r<d,\\
 1,&2r=d,\\
 0,&2r>d.
 \end{cases}
\end{equation}
In particular, this weight lies in $[1,2]$ on $2r\leq d$.
\item One has $L(\sigma,\chi_{\mathfrak P})\geq0$ for real
$\sigma\geq1/2$, with strict positivity for $\sigma>1/2$.
\label{eq:ff-positive}
\end{enumerate}
\end{lemma}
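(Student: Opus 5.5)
The plan is to deduce everything from the factorization $\zeta_E(u)=\zeta_F(u)L(u,\chi_{\mathfrak P})$, where $E=F(\sqrt{\pi_{\mathfrak P}})$, and from the coefficient symmetry that it forces. This avoids computing root numbers directly.

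\emph{Step 1: the ideal series is the complete $L$-function.} By Proposition~\ref{prop:ff-positive-local-component}, $\chi_{\mathfrak P}$ is ramified at $\infty$, so its Euler factor at $\infty$ is $1$. Hence the ideal sum over $\mathscr A$ coincides with the complete $L$-function over all places of $C$. That function is a polynomial of degree $2g_C-2+\deg\mathfrak f_{\mathfrak P}=2g_C-2+n+d_\infty=d_n$, by \cite[Theorem~9.24A]{Rosen} as already used in Lemma~\ref{lem:ff-geometric-prime-bound}.

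\emph{Step 2: the functional equation with sign $+1$.} Since $\pi_{\mathfrak P}$ has odd valuation at $\mathfrak P$, the extension $E/F$ is ramified. The constant extension $\mathbb F_{q^2}F/F$ is unramified, so it is not contained in $E$, and $E$ has constant field $\mathbb F_q$. Then $\zeta_E(u)=\zeta_F(u)L(u,\chi_{\mathfrak P})$ in the same variable $u=q^{-s}$. Riemann--Hurwitz for the tame double cover, ramified exactly at $\mathfrak P$ and $\infty$, gives
\[
 2g_E-2=2(2g_C-2)+n+d_\infty .
\]
Both zeta functions satisfy $Z(u)=q^{g-1}u^{2g-2}Z(1/(qu))$ with sign $+1$, by Riemann--Roch. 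Taking the quotient produces \eqref{eq:ff-root-plus-functional-equation} with exponent $2g_E-2-(2g_C-2)=d_n$ and no sign ambiguity. I expect this step to be the main point needing care: one must check that $E$ is geometric and that the ramified factor at $\infty$ is genuinely absent, since otherwise the degree or the sign would be off. The leading-coefficient normalization is used only for well-definedness.

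\emph{Step 3: the exact approximate functional equation.} Write $L(u,\chi_{\mathfrak P})=\sum_{r=0}^{d}c_ru^r$ with $d=d_n$ and $c_r=\sum_{\deg\mathfrak a=r}\chi_{\mathfrak P}(\mathfrak a)$. Comparing coefficients in \eqref{eq:ff-root-plus-functional-equation} gives $c_{d-r}=q^{d/2-r}c_r$. Evaluate at $u=q^{-1/2-\delta}$ and split the sum into $r<d/2$, $r=d/2$ (which is an integer, since $d_n$ is even on the degrees in \eqref{eq:ff-admissible-degrees}), and $r>d/2$. In the last range, substitute $r\mapsto d-r$ and use the symmetry:
\[
 c_{d-r}q^{-(d-r)(1/2+\delta)}=c_rq^{-r(1/2+\delta)}q^{-\delta(d-2r)} .
\]
Collecting terms gives \eqref{eq:ff-shifted-afe} with the weights \eqref{eq:ff-shifted-weight}. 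For $\delta\ge0$ these weights clearly lie in $[1,2]$ when $2r\le d$.

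\emph{Step 4: positivity.} Because $\zeta_E$ and $\zeta_F$ come from curves over $\mathbb F_q$, Weil's theorem implies that every inverse root of $L(u,\chi_{\mathfrak P})$ has absolute value $q^{1/2}$. So all zeros lie on $|u|=q^{-1/2}$. The polynomial has real coefficients and $L(0)=1$. It therefore has no zero on $[0,q^{-1/2})$, so it is positive there, and by continuity it is nonnegative at $u=q^{-1/2}$. Translating back via $u=q^{-\sigma}$ gives $L(\sigma,\chi_{\mathfrak P})>0$ for $\sigma>1/2$ and $\ge0$ at $\sigma=1/2$.
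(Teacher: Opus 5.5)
Your proposal is correct and follows the paper's proof essentially step for step: full constant field from ramification, Artin factorization $Z_E=Z_C\,L(u,\chi_{\mathfrak P})$, Riemann--Hurwitz combined with the quotient of the two sign-$+1$ zeta functional equations, pairing of coefficients via $c_{d-r}=q^{d/2-r}c_r$, and Weil's theorem for positivity. Your intermediate-value argument for positivity is only a slight repackaging of the paper's grouping of conjugate root factors $|1-ze^{i\theta_j}|^2$ and real factors $1\pm z$.
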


\begin{proof}
Let $C_{\mathfrak P}$ be the smooth projective curve with function field
$F(\sqrt{\pi_{\mathfrak P}})$.
Ramification shows that the extension has full constant field $\mathbb F_q$.
Artin factorization gives
\begin{equation}\label{eq:ff-artin-zeta-factorization}
 Z_{C_{\mathfrak P}}(u)=Z_C(u)L(u,\chi_{\mathfrak P}).
\end{equation}
The two curve zeta functions have the same denominator $(1-u)(1-qu)$.
Their numerator polynomials satisfy functional equations with sign $+1$.
Riemann--Hurwitz gives
\[
 2g_{C_{\mathfrak P}}-2=2(2g_C-2)+n+d_\infty,
 \qquad 2(g_{C_{\mathfrak P}}-g_C)=d_n.
\]
Taking the quotient of the two functional equations proves
\eqref{eq:ff-root-plus-functional-equation}.

To obtain the finite sum, write
$L(u,\chi_{\mathfrak P})=\sum_{r=0}^{d_n}a_r u^r$.
Its coefficients are
$a_r=\sum_{\deg\mathfrak a=r}\chi_{\mathfrak P}(\mathfrak a)$,
since the factor at $\infty$ is one.
Comparing coefficients in the functional equation gives
$a_{d_n-r}=q^{d_n/2-r}a_r$.
At $u=q^{-1/2-\delta}$, a pair of terms with $2r<d_n$ is therefore
\[
 a_rq^{-(1/2+\delta)r}
 +a_{d_n-r}q^{-(1/2+\delta)(d_n-r)}
 =a_rq^{-(1/2+\delta)r}
       \{1+q^{-\delta(d_n-2r)}\}.
\]
The middle term occurs once.  This proves the formula.
At $\delta=0$, it is the usual exact central formula.
Compare \cite[Section~3.5, Lemma~1]{AndradeKeating}.

By Weil's theorem,
\[
 L(q^{-1/2}z,\chi_{\mathfrak P})
 =\prod_{j=1}^{d_n}(1-ze^{i\theta_j}).
\]
The coefficients are real.  A conjugate pair contributes
$|1-ze^{i\theta_j}|^2$, while a real factor is $1-z$ or $1+z$.
All these factors are nonnegative for $0\leq z\leq1$, and positive
for $0\leq z<1$.  Take $z=q^{1/2-\sigma}$.
\end{proof}

For comparison with polynomial families, take $F=\mathbb F_q(t)$ and
$\deg\infty=1$. Our convention is $\chi_P(f)=(P/f)$, whereas
\cite{DarbarMaitiFF} uses $(f/P)$ for monic polynomials.
For odd $\deg P=n$, quadratic reciprocity gives
\[
 (P/f)=(-1)^{((q-1)/2)\deg f}(f/P).
\]
The conventions agree when $q\equiv1\pmod4$.
When $q\equiv3\pmod4$, set $P^*(t)=-P(-t)$.
This is an involution on the monic primes of odd degree.
For $\deg f=r$, the substitution $f^*(t)=(-1)^rf(-t)$ permutes
the monic polynomials of degree $r$ and gives
\[
 (f^*/P^*)=(-1)^r(f/P).
\]
Thus the coefficient sums for our character at $P$ equal those for
the convention in \cite{DarbarMaitiFF} at $P^*$.
The two conventions give the same multiset of $L$-values on the family,
so the comparison of lower-bound constants is unaffected.

\subsection{A G\'al sum with prescribed prime degrees}

The prime-divisor theorem has the form
\begin{equation}\label{eq:ff-prime-divisor-theorem}
 \#\{\mathfrak p:\deg\mathfrak p=j\}
 =\frac{q^j}{j}+O_F(q^{j/2}/j)
\end{equation}
by \cite[Theorem~5.12]{Rosen}.
We use one exact degree per block.  All weights in a block are then equal,
which avoids any loss from the discreteness of the possible norms.

\begin{proposition}\label{prop:ff-gal}For every sufficiently large integer $N$, there is a set $\mathcal M_F$
of $N$ squarefree integral ideals of $\mathscr A$ with the following
properties.
\begin{enumerate}
\item Uniformly for $A$ in a fixed compact subset of $[0,\infty)$,
\begin{equation}\label{eq:ff-moving-gal}
 \sum_{\mathfrak m,\mathfrak r\in\mathcal M_F}
 \left(\frac{|(\mathfrak m,\mathfrak r)|}
 {|[\mathfrak m,\mathfrak r]|}\right)^{1/2+A/\log_2N}
 \geq N\exp\{(2e^{-A}+o(1))\mathcal V(N)\}.
\end{equation}
\item Its prime factors have norm at most $(\log N)^{1+o(1)}$, and
\begin{equation}\label{eq:ff-degree-size}
 \max_{\mathfrak m\in\mathcal M_F}\deg\mathfrak m
 \ll_F\frac{\log N\log_2N}{\log_3N},
 \qquad E_\square(\mathcal M_F)\leq N^{2+o(1)}.
\end{equation}
\item If $N\leq q^{n/4}$, restricting the sum to
\begin{equation}\label{eq:ff-gal-trunc}
 \deg\frac{[\mathfrak m,\mathfrak r]}{(\mathfrak m,\mathfrak r)}
 \leq\vartheta n
\end{equation}
for fixed $\vartheta>0$ preserves the lower bound.
If $N=X_n^{\beta+o(1)}$ with fixed $\beta>0$, the exponent
$A/\log_2N$ may also be replaced by $A/\log_2X_n$.
\end{enumerate}
\end{proposition}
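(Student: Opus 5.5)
We mirror the proof of Proposition~\ref{prop:gal}, replacing norm intervals by exact degrees. Write $T_i=\log_iN$ and choose $K_N=\lfloor T_2^\gamma\rfloor$. For the $k$th block, take an integer degree $j_k$ with $q^{j_k}\approx u^kT_1T_2$. Concretely, set $j_k=j_1+k\ell$, where $\ell\geq1$ is a fixed step playing the role of $\log u/\log q$, so that $u=q^{\ell}$. Then let $\mathscr P_k$ consist of all primes of degree $j_k$.

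By \eqref{eq:ff-prime-divisor-theorem}, $P_k=(1+o(1))q^{j_k}/j_k$. All weights in the block equal $w=q^{-j_k/2}$, so $H_k=P_kw$ and $W_k=w$. Hence
\[
H_k^2/P_k=(1+o(1))/j_k=(1+o(1))/(\log q\,T_2)\cdot\log q .
\]
This is the analogue of \eqref{eq:nf-block-density} with $\lambda(u)$ replaced by $1$, because equal weights give no Cauchy--Schwarz loss. The entropy per chosen prime is $\log(P_k/J_k)=k\ell\log q+O(T_4+\log k)$. With the quotas $J_k(a)=\lfloor aT_1/(k^2T_3)\rfloor$, we get $\log|\mathcal M(a)|=(a\gamma\ell\log q+o(1))\log N$, exactly as in \eqref{eq:nf-entropy}.

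Applying Lemma~\ref{lem:squarefree-block} with $r_k=\lfloor e^{-A}\alpha k^{-1}\sqrt{T_1/(T_2T_3)}\rfloor$, the gain is $\sum_k 2r_k$ times a constant. The constant becomes $\sqrt{\gamma(1-\eta)\,c(\ell)}$, where $c(\ell)$ tends to $1$ because $\lambda$ is replaced by the ratio of $\ell\log q$ to the block width. Here the weight $q^{-j_k(1/2+A/T_2)}$ equals $e^{-A(1+o(1))}w$, since $j_k\log q=T_2+o(T_2)$ uniformly for $k\le K_N$.

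I expect the main obstacle to be the discreteness of the degrees. Blocks cannot be made arbitrarily fine, since each one is a single degree. Instead, take $u=q^\ell$ fixed and note that the loss comes only through $\gamma$ and $\eta$, not through $\lambda(u)$. The factor $\log u$ in the entropy and the block density are then both exact. I would check that the coefficient in \eqref{eq:nf-gal-intermediate} becomes $2e^{-A}\sqrt{\gamma(1-\eta)}$ independently of $\ell$. If instead some residual $\ell$-dependence appears, for example from the step between consecutive $j_k$, I would take $\ell=1$ and absorb it. The goal is a block sum whose harmonic structure $\sum_k 1/k=\gamma T_3$ reproduces the same leading constant $2$.

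The remaining steps follow Proposition~\ref{prop:gal}. First, adjust the quotas and add an auxiliary cube of low-degree primes to reach exactly $N$ elements. Second, bound the energy by Lemma~\ref{lem:hamming-energy} or by Remark~\ref{rem:hypercontractive-energy}, using that each ideal has $o(\log N)$ prime factors. Third, make a diagonal choice of $\gamma_j\to1$ and $\eta_j\to0$. The prime norms are at most $q^{j_{K_N}}=(\log N)^{1+o(1)}$.

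The degree bound follows from $\deg\mathfrak m\le\sum_kJ_kj_k+h\cdot O(T_3)\ll T_1T_2/T_3$. For (iii), copy Lemma~\ref{lem:gal-truncation}: pairs with $\deg>\vartheta n$ have $|d|^{-1/2}\le q^{-\vartheta n/6}|d|^{-1/3}$. Summing $\prod_{\deg\mathfrak p\le\log_q y}(1+|\mathfrak p|^{-1/3})=\exp\{y^{2/3+o(1)}\}$ shows the discarded mass is $o(N)$ when $N\le q^{n/4}$. Finally, replacing $A/\log_2N$ by $A/\log_2X_n$ uses $B_X=A\log_2N/\log_2X_n=A+o(1)$, together with uniformity over compact $A$-sets.
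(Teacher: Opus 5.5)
With $\ell=1$ your construction is exactly the paper's: blocks of primes of degree $D_N+k$ with $D_N=\lceil\log_q(T_1T_2)\rceil$, the same quotas and $r_k(A)$, the block lemma, cube completion, the energy bound, diagonalization, truncation, and the replacement of the shift. However, you handle the step you call the main obstacle incorrectly. The coefficient is \emph{not} independent of $\ell$, so $\ell=1$ is forced rather than an optional fallback.

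For a single-degree block the density is $H_k^2/P_k=1/j_k=(\log q+o(1))/T_2$. It is not $(1+o(1))/T_2$; in \eqref{eq:nf-block-density}, $\lambda(u)$ is replaced by $\log q$, not by $1$. This density does not depend on $\ell$. By contrast, $\log(P_k/J_k)=k\ell\log q+2\log k+T_4+O(1)$. Hence $\log|\mathcal M(a)|=(a\gamma\ell\log q+o(1))T_1$ and $a_-=(1-\eta)/(\gamma\ell\log q)$. Since $r\log(e^2c/r^2)\le 2\sqrt c$, the best choice of $r_k$ in Lemma~\ref{lem:squarefree-block} gives only
\[
 \log\frac{S_{1/2+A/T_2}(\mathcal M_*)}{M}
 \geq\Bigl(2e^{-A}\sqrt{\gamma(1-\eta)/\ell}+o(1)\Bigr)\mathcal V(N).
\]
For $\ell\geq2$ this coefficient is at most $\sqrt2\,e^{-A}$, short of the required $2e^{-A}$. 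The skipped degrees increase the entropy per selected prime by a factor $\ell$ without increasing the available weight. So your $c(\ell)$ equals $1/\ell$; it does not ``tend to $1$''. The ratio that matters, $\lambda(u)/\log u$ in the number field case, is here $\log q/(\ell\log q)$.

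The cancellation the paper notes after \eqref{eq:ff-gal-intermediate} occurs exactly when block spacing equals block width, i.e.\ for consecutive degrees. Then the $\log q$ in the density cancels the $\log q$ in $\log|\mathcal M_F(a)|=(a\gamma\log q+o(1))T_1$, giving $2e^{-A}\sqrt{\gamma(1-\eta)}$. Widening each block to $\ell$ consecutive degrees does not help either. It restores the cardinality slope but reintroduces a Cauchy--Schwarz loss. That loss is the factor $\tanh(\ell t/2)/(\ell\tanh(t/2))<1$ with $t=\log\sqrt q$, the analogue of $\lambda(u)<\log u$.

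So drop the general step and state the $\ell=1$ computation directly. Your arguments for (ii) and (iii) then go through as written. This includes the degree bound, the energy bound (via Remark~\ref{rem:hypercontractive-energy} or the superset $\mathcal C_h\mathcal M_F(a_+)$), the truncation, and the replacement $B=A\log_2N/\log_2X_n$.
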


\begin{proof}
Fix $0<\gamma<1$ and $0<\eta<1/2$ for now, and abbreviate
$T_i=\log_iN$.
Set
\[
 D_N=\lceil\log_q(T_1T_2)\rceil,
 \qquad K_N=\lfloor T_2^\gamma\rfloor.
\]
Let $\mathscr P_k$ be the primes of $\mathscr A$ of degree $D_N+k$,
where $1\leq k\leq K_N$, and put $P_k=|\mathscr P_k|$.
Since $K_N=o(D_N)$, \eqref{eq:ff-prime-divisor-theorem} gives, uniformly
over the blocks,
\begin{equation}\label{eq:ff-layer-count}
 P_k=c_Nq^kT_1(1+o(1)),\quad c_N\asymp_q1,
 \qquad P_kq^{-(D_N+k)}=\frac{\log q+o(1)}{T_2}.
\end{equation}

For fixed $a>0$, take all products containing at most
$J_k(a)=\lfloor aT_1/(k^2T_3)\rfloor$ primes in block $k$.
Denote the product of these families by $\mathcal M_F(a)$.
The binomial estimate \eqref{eq:binomial-tail-entropy} gives
\begin{equation}\label{eq:ff-layer-entropy}
 \log|\mathcal M_F(a)|=(a\gamma\log q+o(1))T_1.
\end{equation}
Here is the calculation of its leading term.
Uniformly in $k$,
\[
 \log(P_k/J_k(a))
 =k\log q+2\log k+T_4+O_{q,a}(1),
 \qquad J_k(a)/P_k=o(1).
\]
The first term contributes
$\log q\sum_k kJ_k(a)=(a\gamma\log q+o(1))T_1$.
The remaining terms are bounded by
\[
 O_{q,a}\left(\frac{T_1(1+T_4)}{T_3}
       +K_NT_2+\frac{T_1}{T_3^2}\right)=o(T_1).
\]
This includes the floor errors and the terms
$\sum_k J_k(a)^2/P_k\ll_q T_1/T_3^2$.

Put $a_\pm=(1\pm\eta)/(\gamma\log q)$.
Starting with the quotas $J_k(a_-)$, increase them one at a time toward
$J_k(a_+)$.  Stop immediately before the size first exceeds $N$.
The resulting product family $\mathcal M_*$ has size $M\leq N$ and
quotas $J_k$ between these two bounds.  Comparing consecutive binomial
tails gives
\begin{equation}\label{eq:ff-near-N}
 N/M\leq1+\max_kP_k\leq T_1^{1+o(1)}.
\end{equation}

We apply Lemma~\ref{lem:squarefree-block} to the shifted weights
\[
 w_k(A)=q^{-(1/2+A/T_2)(D_N+k)},
 \qquad H_k(A)=P_kw_k(A).
\]
Since $(D_N+k)\log q=T_2+T_3+k\log q+O_q(1)$, one has
\begin{equation}\label{eq:ff-moving-local}
 w_k(A)=e^{-A+o(1)}q^{-(D_N+k)/2}
\end{equation}
uniformly in $k$ and in bounded $A$.
Choose
\[
 r_k(A)=\left\lfloor
 \frac{e^{-A}\sqrt{a_-\log q}}k
 \sqrt{\frac{T_1}{T_2T_3}}\right\rfloor.
\]
For $A$ in a fixed compact interval, these integers tend to infinity
uniformly in $k$.
The remaining hypotheses of the block lemma follow from
\[
 \frac{r_k(A)}{J_k}
 \ll\frac{K_N\sqrt{T_3}}{\sqrt{T_1T_2}}=o(1),
 \qquad \frac{J_k}{P_k}\ll_q T_3^{-1}=o(1),
\]
and
\[
 \frac{J_k+r_k(A)^2}{P_k}
 \ll_q\frac{q^{-k}}{k^2T_3}
       +\frac{q^{-k}}{k^2T_2T_3}=o(1).
\]
Moreover, \eqref{eq:ff-layer-count} implies
\[
 \frac{e^2J_kH_k(A)^2}{P_kr_k(A)^2}
 \geq e^2(1+o(1)).
\]
Let $S_s(\mathcal B)$ denote the G\'al sum over $\mathcal B$ with
exponent $s$.  Factoring it over the blocks gives
\begin{align}\label{eq:ff-gal-intermediate}
 \log\frac{S_{1/2+A/T_2}(\mathcal M_*)}{M}
 &\geq(2+o(1))\sum_kr_k(A)\notag\\
 &=\bigl(2e^{-A}\sqrt{\gamma(1-\eta)}+o(1)\bigr)\mathcal V(N).
\end{align}
The factor $\log q$ in the block density cancels the factor $\log q$
in the cardinality estimate.  This explains why the leading constant
does not depend on $q$.

To obtain exactly $N$ elements, put
$h=\lfloor\log(N/M)/\log2\rfloor$.
Choose $h$ new primes, each of degree $O_F(\log(h+2))$, and take all
products of these primes.  Call this cube $\mathcal C_h$.
Its support is disjoint from the original blocks, and
\[
 \frac{S_s(\mathcal C_h)}{|\mathcal C_h|}
 =\prod_{\mathfrak q\text{ new}}(1+|\mathfrak q|^{-s})\geq1.
\]
Thus $\mathcal M_1=\mathcal C_h\mathcal M_*$ has size in $(N/2,N]$,
and
\begin{equation}\label{eq:ff-cube-gal-factorization}
 \frac{S_s(\mathcal M_1)}{|\mathcal M_1|}
 \geq\frac{S_s(\mathcal M_*)}{M}.
\end{equation}
Add elements from $\mathcal M_F(a_+)\setminus\mathcal M_1$ until the
size is $N$.
There are enough, since $|\mathcal M_F(a_+)|>N$ and its intersection
with $\mathcal M_1$ is $\mathcal M_*$.
Positivity of the kernel shows that this completion costs a factor
of at most two in \eqref{eq:ff-gal-intermediate}.

The energy estimate survives the completion.
Lemma~\ref{lem:hamming-energy} gives
\begin{align}\label{eq:ff-upper-energy}
 \log\frac{E_\square(\mathcal M_F(a_+))}{|\mathcal M_F(a_+)|^2}
 &\ll\sum_k\{J_k(a_+)+J_k(a_+)^2/P_k+\log(J_k(a_+)+1)\}\notag\\
 &\ll_F T_1/T_3+T_1/T_3^2+K_NT_2=o(T_1).
\end{align}
Every completed element lies in $\mathcal C_h\mathcal M_F(a_+)$.
Its energy is
$2^{3h}E_\square(\mathcal M_F(a_+))\leq N^{2+2\eta+o(1)}$,
because $2^h\leq T_1^{1+o(1)}$.

A diagonal choice of the parameters gives the limiting constant.
For $j\geq3$, take $\gamma_j=1-1/j$ and $\eta_j=1/j$.
Choose increasing thresholds $N_j$ so that all preceding errors are
at most $1/j$ for $N\geq N_j$, uniformly for $0\leq A\leq j$.
Require also
\[
 E_\square(\mathcal C_h\mathcal M_F(a_+))\leq N^{2+5/j},
 \qquad T_2^{-1/j}\leq1/j.
\]
Use these parameters on $N_j\leq N<N_{j+1}$.
Then \eqref{eq:ff-gal-intermediate} proves \eqref{eq:ff-moving-gal},
and the energy is $N^{2+o(1)}$.
The extra threshold ensures $K_N/D_N=o(1)$ along this choice.
Consequently every original prime has
\[
 \log|\mathfrak p|=(1+o(1))\log_2N,
\]
and the auxiliary primes have smaller degree.
An element of the upper family has degree at most
\[
 \sum_kJ_k(a_+)(D_N+k)
 \ll_F\frac{T_1}{T_3}
        \left(D_N\sum_{k\geq1}k^{-2}+\sum_{k\leq K_N}k^{-1}\right)
 \ll_F\frac{T_1T_2}{T_3}.
\]
The auxiliary primes add $O_F(T_2T_3)$.
This proves all size assertions.

For the truncation, let $\mathscr Q_N$ contain all supporting primes.
Its largest degree is $(1+o(1))\log_q\log N$.
Put $E=\deg([\mathfrak m,\mathfrak r]/(\mathfrak m,\mathfrak r))$.
When $E>\vartheta n$, we have
\[
 q^{-E/2}\leq q^{-\vartheta n/6}q^{-E/3}.
\]
For fixed $\mathfrak m$, enlarge the sum over $\mathfrak r$ to all
squarefree products on $\mathscr Q_N$.
The resulting bound is
\[
 q^{-\vartheta n/6}
 \prod_{\mathfrak p\in\mathscr Q_N}(1+|\mathfrak p|^{-1/3})
 \leq q^{-\vartheta n/6}
       \exp\{(\log N)^{2/3+o(1)}\}.
\]
Summing over $\mathfrak m$ gives $o(N)$ when $N\leq q^{n/4}$.
The shifted kernel is at most the central kernel, so the same bound
applies uniformly in $A\geq0$.
Finally, if $\log N=(\beta+o(1))\log X_n$, put
$B=A\log_2N/\log_2X_n$.
Then $A/\log_2X_n=B/\log_2N$ and $B=A+o(1)$ uniformly for bounded $A$.
Applying the compact-uniform estimate already proved at $B$ gives
the stated replacement of the shift.
\end{proof}

\subsection{A uniform upper bound}

The fourth moment controls the weights placed on a small exceptional set.
To turn that control into a count of large values, we also need an upper
bound for each $L$-value.  The following subpower bound is sufficient.

\begin{lemma}\label{lem:ff-upper}
Uniformly for $\mathfrak P\in\Om_F(n)$ and real $\sigma\geq1/2$,
\begin{equation}\label{eq:ff-uniform-upper}
 0\leq L(\sigma,\chi_{\mathfrak P})
 \leq\exp\{O_F(n\log_2n/\log n)\}=X_n^{o(1)}.
\end{equation}
\end{lemma}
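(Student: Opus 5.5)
The lower bound $L(\sigma,\chi_{\mathfrak P})\geq0$ is already Lemma~\ref{lem:ff-shifted-afe}\ref{eq:ff-positive}. For the upper bound I will use the Weil factorization from the proof of that lemma,
\[
 L(q^{-1/2}z,\chi_{\mathfrak P})=\prod_{j=1}^{d_n}(1-ze^{i\theta_j}),
 \qquad z=q^{1/2-\sigma}\in(0,1].
\]
Each factor satisfies $|1-ze^{i\theta_j}|\leq|1-e^{i\theta_j}|$ for a conjugate pair. This follows because $|1-ze^{i\theta}|^2=1-2z\cos\theta+z^2$ is monotone in $z$ when $\cos\theta\leq z$, but not in general. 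So the direct reduction to $\sigma=1/2$ is not automatic. Instead I will run the function-field analogue of the Soundararajan majorant used in Lemma~\ref{lem:central-upper}, which works uniformly for $\sigma\geq1/2$.

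Write $\log L(q^{-1/2}z)=\sum_j\log(1-ze^{i\theta_j})$. Fix a parameter $m\leq d_n$. Following Carneiro--Chandee or Altu\u{g}--Bettin-style arguments, or more simply the explicit formula, I will bound $\sum_j\log|1-ze^{i\theta_j}|$ from above. The tool is a trigonometric polynomial majorant of $\log|1-ze^{i\theta}|$ of degree $m$, whose constant term is $O(1/m)$ uniformly in $z\in[0,1]$. Concretely, one can use the Fej\'er-smoothed truncation
\[
 \log|1-ze^{i\theta}|\leq-\Re\sum_{k\leq m}\frac{z^k e^{ik\theta}}{k}\Bigl(1-\frac km\Bigr)+\frac{C}{m}.
\]
Summing over $j$ gives the upper bound
\[
 \frac{Cd_n}{m}+\Bigl|\sum_{k\leq m}\frac{z^k}{k}\Bigl(1-\frac km\Bigr)\sum_j e^{ik\theta_j}\Bigr|.
\]
The explicit formula then replaces $\sum_j e^{ik\theta_j}$ by
\[
 -q^{-k/2}\sum_{\deg\mathfrak d=k}\Lambda_F(\mathfrak d)\chi_{\mathfrak P}(\mathfrak d),
\]
up to the bounded contribution of $\infty$. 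Bounding this trivially by the prime-divisor theorem \eqref{eq:ff-prime-divisor-theorem} gives $\ll q^{k/2}$. The second term is therefore $\ll\sum_{k\leq m}q^{k/2}/k\ll q^{m/2}/m$.

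The main obstacle is justifying the majorant with constant term $O(1/m)$ uniformly up to $z=1$, where $\log|1-e^{i\theta}|$ has a singularity. At $z=1$ this is the classical Beurling--Selberg/Carneiro--Vaaler extremal minorant/majorant problem for $\log|2\sin(\theta/2)|$. For $z<1$ I will take the Poisson average of the $z=1$ majorant. This is legitimate because $\log|1-ze^{i\theta}|$ is the Poisson integral of $\log|1-e^{i\phi}|$, which is harmonic in the disc, and Poisson averaging preserves both the majorization and the degree. Alternatively, I can mimic exactly the Hadamard-plus-Perron argument of Lemma~\ref{lem:central-upper}, with zeros on $|u|=q^{-1/2}$ replacing $\Re\rho=1/2$.

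Finally, I optimize by choosing $m=\lfloor 2\log_q n\rfloor$ minus a constant, so that $q^{m/2}/m\ll n/\log n$. Since $d_n\asymp n$, the total is
\[
 O_F(n/\log n)+O_F(n/\log n)\ll n\log_2 n/\log n,
\]
with room to spare. Exponentiating gives \eqref{eq:ff-uniform-upper}, and $n\log_2n/\log n=o(\log X_n)$ gives $X_n^{o(1)}$.
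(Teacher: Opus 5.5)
Your architecture matches the paper's. You use a pointwise degree-$m$ trigonometric majorant of $\log|1-e^{i\theta}|$, summed over the $d_n$ angles, with $\sum_j e^{ik\theta_j}\ll q^{k/2}$ from the explicit formula. Your Poisson-averaging reduction of $z<1$ to $z=1$ is correct; it plays the role of the paper's maximum-modulus step and makes the factorwise remark in your first paragraph unnecessary.

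\textbf{The gap is the majorant itself.} The Fej\'er inequality you display is false. At $z=1$, the difference between $\log|1-e^{i\theta}|$ and your polynomial is exactly
\[
 -\sum_{k>m}\frac{\cos k\theta}{k}-\frac1m\sum_{k\le m}\cos k\theta .
\]
With $\theta=a/m$ this tends to $\mathrm{Ci}(a)-\sin a/a$ as $m\to\infty$. At $a=\pi$ the limit is $\mathrm{Ci}(\pi)\approx0.074$, not $O(1/m)$. So Fej\'er smoothing undershoots the logarithmic singularity by an absolute constant at distance $\asymp1/m$ from it. Consequently the step ``summing over $j$ gives $Cd_n/m+\cdots$'' does not follow: each $\theta_j$ near $\pm\pi/m$ can cost $\asymp1$, and nothing in your argument counts such angles.

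Your fallbacks do not fill this. A Carneiro--Vaaler extremal trigonometric majorant with excess $O(1/m)$ does exist, and with it your choice $m\approx2\log_qn$ would even give $O(n/\log n)$. But you only invoke it, and the Soundararajan-type alternative is only named.

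The missing idea is elementary, and it is what the paper does: damp with the Poisson kernel rather than the Fej\'er kernel, and only in the upper-bound direction.
\begin{itemize}
\item For $0<\rho<1$, maximizing $2(1-\cos\theta)/\bigl((1-\rho)^2+2\rho(1-\cos\theta)\bigr)$ gives $|1-e^{i\theta}|\le\frac{2}{1+\rho}|1-\rho e^{i\theta}|$ for every $\theta$.
\item The expansion $\log|1-\rho e^{i\theta}|=-\sum_{k\ge1}\rho^k\cos(k\theta)/k$ converges absolutely, with tail beyond $m$ at most $\rho^{m+1}/((m+1)(1-\rho))$.
\item With $\rho=\exp(-\log m/m)$, these combine into a genuine pointwise degree-$m$ majorant with excess $O(\log m/m)$.
\item Apply it at $\theta_j+\phi$, sum over $j$, and take $m=\lfloor\log_qn\rfloor$. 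This gives $\log|Q(e^{i\phi})|\le O_F(n\log_2n/\log n)$ uniformly in $\phi$, which is exactly the stated bound; the lemma does not need $O(n/\log n)$.
\item The maximum modulus principle, or your Poisson average, then covers every $\sigma\ge1/2$.
\end{itemize}
Alternatively, the Fej\'er version could be rescued by counting the $\theta_j$ in arcs of length $1/m$ via positivity of the Fej\'er kernel, but that argument is absent from your proposal.
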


\begin{proof}
Positivity was proved in Lemma~\ref{lem:ff-shifted-afe}.
Write
$Q(z)=L(q^{-1/2}z,\chi_{\mathfrak P})=
\prod_{j=1}^{d_n}(1-ze^{i\theta_j})$.
For an integer $M\geq3$, set $\rho=\exp(-\log M/M)$.
The inequality
\[
 \frac{|1-e^{i\theta}|}{|1-\rho e^{i\theta}|}
 \leq\frac2{1+\rho}
\]
follows by maximizing
$2(1-\cos\theta)/((1-\rho)^2+2\rho(1-\cos\theta))$.
Expand $\log|1-\rho e^{i\theta}|$ into its absolutely convergent series.
Its tail satisfies
\[
 \sum_{k>M}\frac{\rho^k}k
 \leq\frac{\rho^{M+1}}{(M+1)(1-\rho)}
 \ll\frac1{M\log M}.
\]
Since $\log(2/(1+\rho))\ll\log M/M$, we obtain
\begin{equation}\label{eq:ff-log-majorant}
 \log|1-e^{i\theta}|
 \leq-\sum_{k\leq M}\frac{\rho^k\cos(k\theta)}k
      +O(\log M/M).
\end{equation}
The inequality remains valid when the left side is $-\infty$.

Logarithmic differentiation of the Euler product gives
\[
 \sum_{j=1}^{d_n}e^{ik\theta_j}
 =-q^{-k/2}\sum_{\deg\mathfrak a=k}
        \Lambda_F(\mathfrak a)\chi_{\mathfrak P}(\mathfrak a)
 \ll_F q^{k/2}.
\]
The last estimate follows from \eqref{eq:ff-prime-divisor-theorem}.
Apply \eqref{eq:ff-log-majorant} at $\theta_j+\phi$ and sum over $j$.
Uniformly in real $\phi$,
\[
 \log|Q(e^{i\phi})|
 \ll_F\frac{n\log M}{M}+
       \sum_{k\leq M}\frac{q^{k/2}}k
 \ll_F\frac{n\log M}{M}+\frac{q^{M/2}}M.
\]
Take $M=\lfloor\log_qn\rfloor$, adjusting the finitely many small
values of $n$.
The maximum-modulus principle extends this bound to $|z|\leq1$.
Substitute $z=q^{1/2-\sigma}$ to obtain the assertion.
\end{proof}

\subsection{Proof of Theorem~\ref{thm:function-field}}

\begin{proof}
We treat the center and the critical window together.
Fix $A_0\geq0$, let $0\leq A\leq A_0$, and write
$s=1/2+A/\log_2X_n$.
Fix $0<\beta<1/4$ and set $N=\lfloor X_n^\beta\rfloor$.
Choose the set in Proposition~\ref{prop:ff-gal}, and define
\[
 R_{\mathfrak P}=\sum_{\mathfrak m\in\mathcal M_F}
                    \chi_{\mathfrak P}(\mathfrak m),
 \quad T_0=\sum_{\mathfrak P\in\Om_F(n)}R_{\mathfrak P}^2,
 \quad T_4=\sum_{\mathfrak P\in\Om_F(n)}R_{\mathfrak P}^4,
\]
and
\[
 T_{1,A}=\sum_{\mathfrak P\in\Om_F(n)}
                L(s,\chi_{\mathfrak P})R_{\mathfrak P}^2.
\]
The supporting primes of the resonator have degree $O_F(\log n)$.
Thus no twisting ideal formed from its elements has a prime factor of
degree $n$, once $n$ is large.

Since the resonator ideals are squarefree, their product is a square
exactly when they are equal.
Proposition~\ref{prop:ff-orthogonality} therefore gives
\begin{equation}\label{eq:ff-T0}
 T_0=\kappa_F\frac{X_n}{n}N
       +O_F(X_n^{1/2}N^2n^{O(1)})
     =(\kappa_F+o(1))\frac{X_n}{n}N.
\end{equation}
For four resonator ideals, the square terms are counted by
$E_\square(\mathcal M_F)$.
Consequently
\begin{equation}\label{eq:ff-T4}
 T_4=\kappa_F\frac{X_n}{n}E_\square(\mathcal M_F)
       +O_F(X_n^{1/2}N^4n^{O(1)})
     \leq\frac{X_n}{n}N^{2+o(1)}.
\end{equation}
The relative error is $X_n^{-1/2+2\beta+o(1)}=o(1)$.

Insert \eqref{eq:ff-shifted-afe} into $T_{1,A}$.
There are $O_F(q^r)$ ideals of degree $r$, as follows from the
rationality of the curve zeta function.
Only $r\leq d_n/2=n/2+O_F(1)$ occurs.
For large $n$, these ideals also have no prime factor of degree $n$.
The squarefree part of each twisting ideal has degree at most
$r+2\max_{\mathfrak m\in\mathcal M_F}\deg\mathfrak m=n^{O(1)}$.
Since the weights are at most two and $s\geq1/2$, the total
orthogonality error is
\begin{align}\label{eq:ff-T1-error}&\ll_F X_n^{1/2}N^2n^{O(1)}
                  \sum_{r\leq d_n/2}q^{r/2}\notag\\
 &\ll_F X_n^{3/4}N^2n^{O(1)}
       =o(X_nN/n).
\end{align}
This uses $\beta<1/4$.

For each pair $\mathfrak m,\mathfrak r$, retain the ideal
$\mathfrak a=[\mathfrak m,\mathfrak r]/(\mathfrak m,\mathfrak r)$.
It satisfies $\mathfrak a\mathfrak m\mathfrak r
=[\mathfrak m,\mathfrak r]^2$.
Restrict to $\deg\mathfrak a\leq\vartheta n$ with fixed
$0<\vartheta<1/2$.
The polynomial weight is then at least one.
Every other square main term is also nonnegative, so it may be omitted.
The truncated G\'al bound now gives
\begin{equation}\label{eq:ff-T1}
 T_{1,A}\geq\frac{X_n}{n}N
   \exp\{(2e^{-A}\sqrt\beta+o(1))\mathcal V_F(n)\}.
\end{equation}
The fixed factor $\kappa_F$ is absorbed in the exponent error.
All estimates are uniform for $0\leq A\leq A_0$.
Dividing by \eqref{eq:ff-T0} proves the maximum bound with coefficient
$2e^{-A}\sqrt\beta$.  Letting $\beta\uparrow1/4$ through a fixed
sequence and choosing admissible degree thresholds diagonally gives the
coefficient $e^{-A}+o(1)$ uniformly for $0\le A\le A_0$.

To count large values, fix $0<c<1$ and choose $c^2/4<\beta<1/4$.
Let
\[
 \mathcal H_{F,A,c}(n)
 =\{\mathfrak P\in\Om_F(n):
 L(s,\chi_{\mathfrak P})\geq
             \exp(c e^{-A}\mathcal V_F(n))\}.
\]
The complement contributes at most
$\exp(c e^{-A}\mathcal V_F(n))T_0$ to $T_{1,A}$.
By \eqref{eq:ff-T0} and \eqref{eq:ff-T1}, this is $o(T_{1,A})$,
uniformly in $A$, since $2\sqrt\beta-c>0$.
Thus Cauchy--Schwarz and Lemma~\ref{lem:ff-upper} give
\[
 (1-o(1))T_{1,A}
 \leq X_n^{o(1)}\#\mathcal H_{F,A,c}(n)^{1/2}T_4^{1/2}.
\]
It follows that
\[
 \#\mathcal H_{F,A,c}(n)
 \geq\frac{T_{1,A}^2}{X_n^{o(1)}T_4}
 \geq\frac{X_n}{n}X_n^{-o(1)}
 =X_n^{1-o(1)}.
\]

Finally, take
$\beta_j=1/4-1/(10j)$ and $c_j=1-3/(10j)$.
Then $c_j^2<4\beta_j$.
For each fixed $j$, the preceding estimates hold beyond an admissible
degree $n_j$, uniformly for $A\in[0,A_0]$.
Increase the thresholds so that the large-value count is at least
$X_n^{1-1/j}$ whenever $n\geq n_j$.
Use the $j$th choice for $n_j\leq n<n_{j+1}$ and put
$\eta_{F,A_0}(n)=1-c_j$ there.
This gives the endpoint count and hence the endpoint maximum.
Taking $A_0=0$ gives the central assertions.
\end{proof}

\section{Large values in the open strip}\label{sec:strip}

For Theorem~\ref{thm:strip}, we use a finite Euler product to favour
characters which take the value $1$ at small primes.
The same choice of coefficients gives the quadratic Dirichlet results in
\cite[Theorems~4 and~5]{DarbarMaiti}.  For related resonance arguments,
see \cite{Lamzouri,AistleitnerMahatabMunschPeyrot}.
We write $\pi_K(y)$ for the number of prime ideals of norm at most $y$.

\begin{proof}[Proof of Theorem~\ref{thm:strip}]
Fix $1/2<\sigma<1$ and $0<b<1$.  Choose $a>0$, to be specified below, and
put $Y=a\log X\log_2X$.  Define a completely multiplicative function $r$ by
\[
 r_{\p}=b\quad(\Norm\p\leq Y,\ \p\notin S),
 \qquad r_{\p}=0\quad\text{otherwise}.
\]
For each member of the family, put
\[
 R_{\aideal}=\prod_{\substack{\Norm\p\leq Y\\\p\notin S}}
 (1-b\chi_{\aideal}(\p))^{-1}
 =\sum_{\mideal}r_{\mideal}\chi_{\aideal}(\mideal).
\]
The series converges absolutely, since it is a product of finitely many
convergent geometric series.  Fix a nonzero smooth function $W$, with
$0\leq W\leq1$, supported in $(1,2)$, and write $I_W=\int W(t)\,dt$.

Let $y=(\log X)^B$, with $B$ as in Lemma~\ref{lem:short-log}. We may
enlarge $B$ if necessary and assume $B>1$, so that $Y<y$ for all
sufficiently large $X$. Put
\[
 P_\sigma(\chi)=\sum_{\Norm\p\leq y}
       \frac{\chi(\p)}{(\Norm\p)^\sigma},\qquad
 S_2=\sum_{\aideal\in\Om}W(\Norm\aideal/X)R_{\aideal}^2.
\]
We will show that the average of $P_\sigma$ with weight $WR^2$ is large.
Lemma~\ref{lem:short-log} then gives the same lower bound for $\log L$,
up to a bounded error.  GRH and positivity for real $s>1$ imply that
$L(s,\chi_{\aideal})>0$ for $1/2<s\leq1$, so this logarithm is real.

The square terms in the expansion of $R^2$ can be computed prime by prime.
The two relevant geometric sums are
\begin{align}
 D_b&=\sum_{\substack{i,j\geq0\\i+j\ \mathrm{even}}}b^{i+j}
     =\frac{1+b^2}{(1-b^2)^2},\label{eq:strip-local-even}\\
 N_b&=\sum_{\substack{i,j\geq0\\i+j\ \mathrm{odd}}}b^{i+j}
     =\frac{2b}{(1-b^2)^2}.\label{eq:strip-local-odd}
\end{align}
The even sum occurs in $R^2$.  Inserting $\chi(\p)$ changes even parity
to odd parity.  A ramified prime contributes $1$ to the first sum and
$0$ to the second.  Thus, with
\[
 D_X=\prod_{\substack{\Norm\p\leq Y\\\p\notin S}}
       \bigl(1-h_K(\p)+h_K(\p)D_b\bigr),
 \qquad \theta(b)=\log D_b,
\]
Proposition~\ref{prop:orthogonality} gives
\begin{equation}\label{eq:strip-S2}
 S_2=\kappa_{\Om}I_WXD_X
 +O\bigl(X^{1/2+\eps-2a\log(1-b)+o(1)}\bigr),
 \qquad D_X=X^{a\theta(b)+o(1)}.
\end{equation}
Here and below $\eps>0$ is fixed and sufficiently small.

We justify summing the error in this application.  For every ideal
$\mideal$, the factor $G_\eps((\mideal)_1)$ is at most
$\prod_{\p\mid\mideal}(1+(\Norm\p)^{-1/2-\eps/2})$.
Its sum against $r_{\mideal}$ is bounded by
\[
 \prod_{\substack{\Norm\p\leq Y\\\p\notin S}}
 \left(1+\frac{b}{1-b}
          (1+(\Norm\p)^{-1/2-\eps/2})\right)
 =X^{-a\log(1-b)+o(1)}.
\]
The squarefree part of a product of two resonator ideals has logarithmic
norm $O_K(Y)$, so its $F_\eps$ factor is $X^{o(1)}$.
The same support majorant bounds the $G_\eps$ factor of their product.
This proves absolute summability and the error in \eqref{eq:strip-S2}.
Finally, $1-h_K(\p)+h_K(\p)D_b=D_b(1+O_b((\Norm\p)^{-1}))$.
The prime ideal theorem and Mertens' estimate therefore give the stated
size of $D_X$.

After insertion of a prime in the resonator support, the local quotient is
\begin{equation}\label{eq:strip-local-quotient}
 \frac{h_K(\p)N_b}{1-h_K(\p)+h_K(\p)D_b}
 =\frac{2b}{1+b^2}+O_b((\Norm\p)^{-1}).
\end{equation}
This exact parity count is the sharpened local factor mentioned after the
corresponding calculation in \cite{DarbarMaiti}. It is slightly stronger
than replacing the quotient by $b+o(1)$ unless $b$ is close to one.
A prime outside the support gives no square term. Primes in $S$ have
zero character value. Reinserting them in the prime sum below costs $O(1)$.
Consequently,
\begin{align}
 &\sum_{\aideal\in\Om}W(\Norm\aideal/X)
       P_\sigma(\chi_{\aideal})R_{\aideal}^2\notag\\
 &\quad=\kappa_{\Om}I_WXD_X
 \left(\frac{2b}{1+b^2}\sum_{\Norm\p\leq Y}(\Norm\p)^{-\sigma}
       +O_{K,\sigma,b}(1)\right)
 +O\bigl(X^{1/2+\eps-2a\log(1-b)+o(1)}\bigr).
 \label{eq:strip-S1}
\end{align}
The local error is summable because
$\sum_{\p}(\Norm\p)^{-1-\sigma}<\infty$.
For the average error, the inserted prime adds at most $\log y$ to the
logarithmic norm of the squarefree part.  The additional sum of
$(\Norm\p)^{-\sigma}(1+(\Norm\p)^{-1/2-\eps/2})$ over $\Norm\p\leq y$
is $X^{o(1)}$.  The preceding majorant therefore applies unchanged.

Since
\[
 \theta(b)+2\log(1-b)
 =\log\frac{1+b^2}{(1+b)^2}=-\frac{\alpha(b)}2,
\]
both errors are negligible whenever
\begin{equation}\label{eq:a-condition}
 a<\frac1{\alpha(b)},
 \qquad 0<\eps<\frac{1-a\alpha(b)}2.
\end{equation}
Dividing by the common main term in \eqref{eq:strip-S2} gives
\[
 \frac{\sum W(\Norm\aideal/X)P_\sigma(\chi_{\aideal})R_{\aideal}^2}{S_2}
 =\left(\frac{2b\,a^{1-\sigma}}{(1+b^2)(1-\sigma)}+o(1)\right)
       \frac{(\log X)^{1-\sigma}}{(\log_2X)^\sigma}.
\]
We used partial summation in the prime ideal theorem to obtain
$\sum_{\Norm\p\leq Y}(\Norm\p)^{-\sigma}
\sim Y^{1-\sigma}/((1-\sigma)\log Y)$.
For every fixed $a<1/\alpha(b)$ we may first let $X\to\infty$ with
$\eps$ chosen as in \eqref{eq:a-condition}. The resulting lower bound is
valid for every such fixed $a$. Letting $a\uparrow1/\alpha(b)$ afterwards
(or using a diagonal choice of $a=a(X)$) proves the maximum assertion.

For the frequency assertion, put $a_0=1/\alpha(b)-\eta>0$ and write
\[
 H_\sigma(X)=(\log X)^{1-\sigma}(\log_2X)^{-\sigma}.
\]
Choose $a=a_0+\rho_X$, where $\rho_X\to0^+$ so slowly that
$\rho_XH_\sigma(X)\to\infty$, and also
$\rho_X<\tfrac12(1/\alpha(b)-a_0)=\eta/2$ for large $X$.
Thus $a$ remains a fixed positive distance below $1/\alpha(b)$, and the
error estimates above are uniform for $a$ in a compact interval around
$a_0$. Since the main coefficient is a positive constant times
$a^{1-\sigma}$, the mean at $a_0+\rho_X$ exceeds its value at $a_0$ by
$\gg_{\sigma,b,\eta}\rho_XH_\sigma(X)$. Hence, after slowing $\rho_X$
if necessary, the weighted mean exceeds
$\tau_{\sigma,\eta}(X)+C_\sigma+1$, where $C_\sigma$ bounds the error in
Lemma~\ref{lem:short-log}.
Let $\mathcal E\subseteq\Om(X)$ be the set where
$\log L(\sigma,\chi_{\aideal})>\tau_{\sigma,\eta}(X)$.
Outside $\mathcal E$ we have
$P_\sigma\leq\tau_{\sigma,\eta}(X)+C_\sigma$.
Subtracting this contribution from the weighted mean gives
\[
 \sum_{\aideal\in\mathcal E}W(\Norm\aideal/X)R_{\aideal}^2
 \geq S_2X^{-o(1)},
\]
because $|P_\sigma|+|\tau_{\sigma,\eta}(X)|+C_\sigma=X^{o(1)}$.
On the other hand,
\[
 R_{\aideal}^2\leq(1-b)^{-2\pi_K(Y)}
       =X^{-2a\log(1-b)+o(1)}.
\]
Combining these bounds with \eqref{eq:strip-S2} and
$\#\Om(X)=X^{1+o(1)}$, we obtain
\[
 \frac{\#\mathcal E}{\#\Om(X)}
 \geq X^{a\theta(b)+2a\log(1-b)+o(1)}
 =X^{-\frac12(1-\eta\alpha(b))+o(1)}.
\]
\end{proof}

\section{Large values at \texorpdfstring{$1$}{1}}\label{sec:one}

At $s=1$, the small primes determine both the leading factor and the
constant term.  We use the coefficients from
\cite[Theorems~2 and~3]{DarbarMaiti}.  They approach $1$ at each fixed
prime, which makes the resonating Euler product close to Mertens' product.
The distribution of quadratic values at $1$ is studied in
\cite{GranvilleSoundararajan,MontgomeryVaughan}.

The following estimate justifies the infinite series used in the proof.
It also controls products whose individual squarefree parts overlap.

\begin{lemma}\label{lem:euler-error}
Fix $B,c>0$, and put $y=(\log X)^B$ and
$z=c^{-1}\log X\log_2X$.
Let $a_{\lideal}$ be the indicator that all prime factors of $\lideal$
have norm at most $y$.  Define a completely multiplicative function $r$ by
\[
 r_{\p}=1-\Norm\p/z\quad(\Norm\p<z,\ \p\notin S),
 \qquad r_{\p}=0\quad\text{otherwise}.
\]
For every fixed $0<\eps<1/2$, the following estimates hold:
\begin{enumerate}
\item
\begin{equation}\label{eq:l-error-sum}
 \sum_{\lideal}\frac{a_{\lideal}}{\Norm\lideal}
 F_\eps((\lideal)_0)G_\eps((\lideal)_1)=X^{o(1)}.
\end{equation}
\item
\begin{equation}\label{eq:m-error-sum}
 \sum_{\mideal}r_{\mideal}
 F_\eps((\mideal)_0)G_\eps((\mideal)_1)=X^{1/c+o(1)}.
\end{equation}
\item Both estimates remain valid after replacing $G_\eps((\nideal)_1)$
by $\prod_{\p\mid\nideal}(1+(\Norm\p)^{-1/2-\eps/2})$.
In particular,
\[
 \sum_{\lideal,\mideal,\nideal}
 \frac{a_{\lideal}r_{\mideal}r_{\nideal}}{\Norm\lideal}
 F_\eps((\lideal\mideal\nideal)_0)
 G_\eps((\lideal\mideal\nideal)_1)
 \leq X^{2/c+o(1)}.
\]
\end{enumerate}
\end{lemma}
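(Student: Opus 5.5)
The plan is to reduce all three sums to Euler products over the primes of norm at most $\max(y,z)=(\log X)^{B}$ (for large $X$, taking $B>1$), after first bounding the conductor factor uniformly. For the conductor factor, note that every ideal in the supports has all prime factors of norm at most $(\log X)^{\max(B,1+o(1))}$. Hence its squarefree part has norm at most $\prod_{\Norm\p\leq (\log X)^{B'}}\Norm\p=\exp\{O_K((\log X)^{B'})\}$, where $B'=\max(B,2)$. This is too large for Lemma~\ref{lem:FG-uniform} directly, since $F_\eps$ would then be $\exp\{O((\log X)^{B'(1-\eps)})\}$. So the first step is to refine the bound on $F_\eps$: the squarefree part is a product of distinct primes of norm at most $(\log X)^{B'}$. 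I would bound $\log(2+\Norm\lideal_0)\leq\sum_{\p\mid\lideal_0}\log\Norm\p$ and use $(\sum x_i)^{1-\eps}\leq\sum x_i^{1-\eps}$. This gives the multiplicative majorant
\[
F_\eps(\lideal_0)\leq C\prod_{\p\mid\lideal}\exp\{C_{K,\eps}(\log\Norm\p)^{1-\eps}\}.
\]
The key point is that this majorant is a product over the prime support of $\lideal$, taken without multiplicity, so it is submultiplicative under products of ideals. The same holds for $G_\eps$ after replacing it by $\prod_{\p\mid\nideal}(1+(\Norm\p)^{-1/2-\eps/2})$, which dominates it and proves the reduction claimed in part (iii).

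With these majorants, the sum in (i) is bounded by an Euler product. For each $\Norm\p\leq y$, the local factor is
\[
1+c_\p\sum_{k\geq1}(\Norm\p)^{-k},\qquad c_\p=e^{C(\log\Norm\p)^{1-\eps}}\bigl(1+(\Norm\p)^{-1/2}\bigr).
\]
Its logarithm is $\ll\sum_{\Norm\p\leq y}e^{C(\log\Norm\p)^{1-\eps}}/\Norm\p$. Since $\log\Norm\p\leq B\log_2X$, this is $\exp\{O((\log_2X)^{1-\eps})\}\log_3X$, which is $o(\log X)$, proving (i).

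For (ii), the local factor is $(1-r_\p)^{-1}$ adjusted by $c_\p$, namely $1+c_\p r_\p/(1-r_\p)$. Here $1-r_\p=\Norm\p/z$, so the logarithm is bounded by
\[
\sum_{\Norm\p<z}\log\bigl(1+c_\p z/\Norm\p\bigr)\leq\sum_{\Norm\p<z}\bigl(\log(z/\Norm\p)+\log(1+c_\p)\bigr).
\]
The first part is $\int_2^z\pi_K(t)\,dt/t\sim z/\log z=(1+o(1))c^{-1}\log X$. The second part is $\ll\pi_K(z)(\log z)^{1-\eps}=o(\log X)$. This gives the upper bound $X^{1/c+o(1)}$. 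The matching lower bound is not needed for the upper estimates used later, but follows from the same computation with $F,G\geq1$.

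For the triple sum in (iii), I would bound the factor attached to $\lideal\mideal\nideal$ by the product of the three separate majorants, using submultiplicativity of the support products. The sum then factors into (i) times the square of (ii), giving $X^{2/c+o(1)}$. The main obstacle is the conductor factor $F_\eps$. Naively the squarefree parts have norm $\exp\{(\log X)^{1+o(1)}\}$, and Lemma~\ref{lem:FG-uniform} does not apply because the $r$-sums involve arbitrarily high prime multiplicities and norms near $z$. The multiplicative majorant above is what resolves this: the only extra cost is a factor $\exp\{O((\log\Norm\p)^{1-\eps})\}$ per prime in the support, counted once, and this is swamped by the main terms.
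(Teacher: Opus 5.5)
There is a genuine gap in (ii), and it carries over to (iii). Your treatment of (i) is correct, and it differs from the paper's. The paper splits at $\Norm\lideal=X$ and uses $F_\eps\prod_{\p\mid\lideal}(1+(\Norm\p)^{-1/2-\eps/2})\leq(\Norm\lideal)^{\delta_X}$ on the tail. Your per-prime majorant works in (i) because each cost $e^{C(\log\Norm\p)^{1-\eps}}$ is damped by $1/\Norm\p$.

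In (ii), the estimate $\sum_{\Norm\p<z}\log(1+c_\p)\ll\pi_K(z)(\log z)^{1-\eps}=o(\log X)$ is false. Since $\pi_K(z)\sim z/\log z\sim c^{-1}\log X$, this quantity has order $c^{-1}\log X(\log_2X)^{1-\eps}$. A sharper splitting of $\log(1+c_\p z/\Norm\p)$ does not help. For $z/2\leq\Norm\p\leq3z/4$ one has $r_\p/(1-r_\p)\geq1/3$, so these primes alone contribute $\gg c^{-1}\log X(\log_2X)^{1-\eps}$ to the logarithm of your majorizing Euler product. Your majorant therefore gives only $X^{1/c+O((\log_2X)^{1-\eps})}$, not $X^{1/c+o(1)}$. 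The loss is in the subadditivity step $(\sum_ix_i)^{1-\eps}\leq\sum_ix_i^{1-\eps}$. Take an ideal with $\asymp\log X$ distinct prime factors of norm $\asymp z$. For it, $(\log\Norm(\mideal)_0)^{1-\eps}\asymp(\log X\log_2X)^{1-\eps}=o(\log X)$, whereas your majorant charges $\asymp\log X(\log_2X)^{1-\eps}$.

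The missing idea is that the resonator sums need no per-prime majorant; the uniform bound you dismissed works once you use the correct support. If $r_\mideal\ne0$, every prime factor has norm $<z=(\log X)^{1+o(1)}$. Rounding that exponent up to $B'\geq2$ is what made the direct bound look hopeless. So $(\mideal)_0$ divides the product of these primes, $\log\Norm(\mideal)_0\ll_Kz$, and $F_\eps((\mideal)_0)\leq\exp\{O((\log X\log_2X)^{1-\eps})\}=X^{o(1)}$ uniformly. Equivalently, Lemma~\ref{lem:FG-uniform} applies with $\uideal=(1)$ and $\videal=\mideal$: it allows arbitrary multiplicities, and $z\leq(\log X)^{1+\eps/4}$. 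This is what the paper does. Pull this factor out. Your Euler-product computation for the remaining support product then gives $X^{1/c+o(1)}$, using $\sum_{\Norm\p<z}(\Norm\p)^{-1/2-\eps/2}=o(\log X)$.

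For (iii), use submultiplicativity of $F_\eps$ itself rather than of your majorant:
$F_\eps((\lideal\mideal\nideal)_0)\leq F_\eps((\lideal)_0)F_\eps((\mideal)_0)F_\eps((\nideal)_0)$.
This follows from $\Norm(\lideal\mideal\nideal)_0\leq\Norm(\lideal)_0\Norm(\mideal)_0\Norm(\nideal)_0$, from $2+abc\leq(2+a)(2+b)(2+c)$, and from subadditivity of $t^{1-\eps}$ over just three terms. Then bound the $\lideal$-factor by (i), where your majorant is fine. Bound the $\mideal$- and $\nideal$-factors by the uniform $X^{o(1)}$, and handle $G_\eps$ by the support products.
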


\begin{proof}
We prove the stronger bounds in the third assertion.
In the first sum, the ideals of norm at most $X$ have $F_\eps=X^{o(1)}$.
The remaining Euler product satisfies
\[
 \sum_{\lideal}\frac{a_{\lideal}}{\Norm\lideal}
 \prod_{\p\mid\lideal}(1+(\Norm\p)^{-1/2-\eps/2})
 \ll_{K,\eps}\prod_{\Norm\p\leq y}(1-(\Norm\p)^{-1})^{-1}
 \ll_{K,B,\eps}\log y.
\]
For $\Norm\lideal>X$, put $\delta_X=(\log X)^{-\eps/3}$.
The definition of $F_\eps$ and the maximal-order bound used in
Lemma~\ref{lem:FG-uniform} imply, uniformly in this range,
\[
 F_\eps((\lideal)_0)
 \prod_{\p\mid\lideal}(1+(\Norm\p)^{-1/2-\eps/2})
 \leq(\Norm\lideal)^{\delta_X}.
\]
Indeed, their logarithms are bounded respectively by
\[
 O_{K,\eps}((\log\Norm\lideal)^{1-\eps})
 \quad\text{and}\quad
 O_{K,\eps}((\log\Norm\lideal)^{1/2-\eps/2}).
\]
The tail is therefore at most
\[
 \prod_{\Norm\p\leq y}
 (1-(\Norm\p)^{-1+\delta_X})^{-1}=X^{o(1)}.
\]
To see the last estimate, note that $\delta_X\log y=o(1)$.
The logarithm of this product is $O_K(\log\log y+1)$ by Mertens' estimate.

Every prime factor of a resonator ideal has norm below $z$.
Thus its squarefree part has logarithmic norm $O_K(z)$ and
$F_\eps=X^{o(1)}$ uniformly.  The other factor sums to
\begin{align*}
 &\prod_{\substack{\Norm\p<z\\\p\notin S}}
 \left(1+\frac{r_{\p}}{1-r_{\p}}
       (1+(\Norm\p)^{-1/2-\eps/2})\right)\\
 &\qquad=\exp\left(\sum_{\substack{\Norm\p<z\\\p\notin S}}
             \log\frac z{\Norm\p}+o(\log X)\right)
 =X^{1/c+o(1)}.
\end{align*}
Here $\sum_{\Norm\p<z}(\Norm\p)^{-1/2-\eps/2}=o(\log X)$, and
\[
 \sum_{\Norm\p<z}\log\frac z{\Norm\p}
 =\int_2^z\frac{\pi_K(t)}t\,dt
 =\frac z{\log z}(1+o(1))
 =\frac{\log X}{c}(1+o(1)).
\]
The sum without either error factor has the same exponent, which proves
the matching lower bound in \eqref{eq:m-error-sum}.

For the product assertion, its squarefree support is contained in the
union of the individual squarefree supports.
If $a,b,c$ are the norms of the three individual squarefree parts,
then the norm of the product's squarefree part is at most $abc$.
The inequality $2+abc\leq(2+a)(2+b)(2+c)$, followed by
$(u+v)^{1-\eps}\leq u^{1-\eps}+v^{1-\eps}$, therefore bounds
its $F_\eps$ factor by the product of the three individual factors,
with the same defining constant $C_{K,\eps}$.
Every prime dividing its square part divides at least one whole ideal.
The support products in the third assertion bound its $G_\eps$ factor.
Multiplication of the preceding sums proves the claim.
\end{proof}

\begin{proof}[Proof of Theorem~\ref{thm:one-line}]
Fix $c>0$, to be specified below, and use $z=c^{-1}\log X\log_2X$.
Take the function $r$ from Lemma~\ref{lem:euler-error}, and put
\[
 R_{\aideal}=\prod_{\substack{\Norm\p<z\\\p\notin S}}
       (1-r_{\p}\chi_{\aideal}(\p))^{-1}.
\]
Fix a nonzero smooth $W$, with $0\leq W\leq1$, supported in $(1,2)$.
Write $I_W=\int W(t)\,dt$ and
\[
 S_0=\sum_{\aideal\in\Om}W(\Norm\aideal/X)R_{\aideal}^2,
 \qquad
 S_1=\sum_{\aideal\in\Om}W(\Norm\aideal/X)
                  L(1,\chi_{\aideal})R_{\aideal}^2.
\]
We seek a lower bound for $S_1/S_0$ with an error tending to zero.

Choose $B>1$ large enough for Lemma~\ref{lem:short-log}, so that
$y=(\log X)^B>z$ for all sufficiently large $X$, and let
\[
 L_y(1,\chi)=\prod_{\Norm\p\leq y}
                 (1-\chi(\p)/\Norm\p)^{-1}.
\]
Let $S_{1,y}$ denote the sum defining $S_1$ with $L_y$ in place of $L$.
Mertens' estimate gives $L_y(1,\chi)\ll_{K,B}\log_2X$ uniformly.
Lemma~\ref{lem:short-log} consequently gives
\[
 \frac{S_1}{S_0}=\frac{S_{1,y}}{S_0}+o(1),
 \qquad L(1,\chi_{\aideal})\ll_{K,B}\log_2X.
\]
The latter bound will also suffice for the frequency assertion.
The primes in $S$ have zero ideal coefficient and contribute no Euler factor.

Expand the Euler products, separating the primes in $S$.
Proposition~\ref{prop:orthogonality} and
Lemma~\ref{lem:euler-error} give
\begin{align}
 S_0&=\kappa_{\Om}I_WX
       \sum_{\mideal\nideal=\square}r_{\mideal}r_{\nideal}
              h_K(\mideal\nideal)
       +O(X^{1/2+\eps+2/c+o(1)}),\label{eq:one-S0-expansion}\\
 S_{1,y}&=\kappa_{\Om}I_WX
       \sum_{\lideal\mideal\nideal=\square}
       \frac{a_{\lideal}r_{\mideal}r_{\nideal}}{\Norm\lideal}
          h_K(\lideal\mideal\nideal)
       +O(X^{1/2+\eps+2/c+o(1)}).\label{eq:one-S1-expansion}
\end{align}
All ideals in these sums are prime to $S$.
The main terms are nonnegative.  In the second sum, retain only ideals
$\lideal$ whose prime factors have norm below $z$.

For $q=\Norm\p<z$, $\p\notin S$, and $r=1-q/z$, the resulting local
factors are
\begin{align}
 D_{\p}&=1-h_K(\p)+h_K(\p)\frac{1+r^2}{(1-r^2)^2},\notag\\
 N_{\p}&=1-h_K(\p)+\frac{h_K(\p)}2
       \left(\frac1{(1-q^{-1})(1-r)^2}
                    +\frac1{(1+q^{-1})(1+r)^2}\right).
       \label{eq:one-local-numerator}
\end{align}
These formulas average the two unramified signs and the ramified value
$0$.  They count all square terms in the two expansions.

We first compare the second moment with the averaging error.
Since $h_K(\p)=q/(q+1)$,
\[
 D_{\p}=\frac{1+r^2}{(1-r^2)^2}
       \left(1-\frac{3r^2-r^4}{(q+1)(1+r^2)}\right).
\]
The logarithm of the product of the second factors is
$O_K(\log\log z)$.  Partial summation in the prime ideal theorem gives
\begin{equation}\label{eq:one-second-size}
 \prod_{\substack{\Norm\p<z\\\p\notin S}}
       \frac{1+r_{\p}^2}{(1-r_{\p}^2)^2}
 =X^{\lambda/c+o(1)},
 \qquad
 \lambda=\int_0^1\log\frac{2-2u+u^2}{u^2(2-u)^2}\,du
        =2-3\log2+\frac\pi2.
\end{equation}
The logarithmic singularity at $u=0$ is integrable.
To justify partial summation there, split at $\Norm\p=\delta z$.
The lower range contributes
$O_K(\delta(1+|\log\delta|)z/\log z+\log z)$. The upper range uses the
prime ideal theorem uniformly.  First let $z\to\infty$, then
$\delta\to0^+$.  The integral follows from
$\int_0^1\log(1+t^2)\,dt=\log2-2+\pi/2$ and elementary integration
of $\log u$ and $\log(2-u)$.

Thus the error in \eqref{eq:one-S0-expansion} is smaller by a fixed power
of $X$ if
\begin{equation}\label{eq:c-condition}
 c>c_0:=2(3\log2-\pi/2),\qquad
 0<\eps<\frac12-\frac{2-\lambda}{c}.
\end{equation}
In this range,
\begin{equation}\label{eq:one-S0-size}
 S_0=\kappa_{\Om}I_WX
          \prod_{\substack{\Norm\p<z\\\p\notin S}}D_{\p}(1+o(1))
     =X^{1+\lambda/c+o(1)}.
\end{equation}
The relative error tends to zero as a fixed negative power of $X$.
The same error bound applies to \eqref{eq:one-S1-expansion}.

It remains to compute the quotient accurately enough to retain its
constant term.  Cancelling the common factor
$h_K(\p)(1+r^2)/(1-r^2)^2$ gives the exact identity
\begin{equation}\label{eq:one-local-identity}
 (1-q^{-1})\frac{N_{\p}}{D_{\p}}
 =1-
 \frac{\displaystyle\frac{(1-r)^2}{(q+1)(1+r^2)}
               +\frac{(1-r^2)^2}{q^2(1+r^2)}}
      {\displaystyle1+\frac{(1-r^2)^2}{q(1+r^2)}}.
\end{equation}
Putting $u=q/z$, we obtain, uniformly for $2\leq q<z$,
\[
 (1-q^{-1})\frac{N_{\p}}{D_{\p}}
 =1-\frac1z\frac{u}{2-2u+u^2}+O(z^{-2}).
\]
Indeed, replacing $q+1$ by $q$ costs $O(u^2/q^2)=O(z^{-2})$,
and the second term in the numerator is also $O(z^{-2})$.
The denominator perturbation itself is $O(u/z)$, while the leading
numerator perturbation is $O(u/z)$. Hence its effect after division is
$O(u^2/z^2)=O(z^{-2})$. Summing logarithms and using the prime ideal theorem
now gives
\[
 \log\prod_{\substack{\Norm\p<z\\\p\notin S}}
          (1-(\Norm\p)^{-1})\frac{N_{\p}}{D_{\p}}
 =-\frac{c_3+o(1)}{\log z},\qquad
 c_3=\int_0^1\frac{u}{2-2u+u^2}\,du
     =\frac\pi4-\frac{\log2}{2}.
\]
For this use of partial summation the integrand is continuous on $[0,1]$.
The omitted fixed primes and the summed $O(z^{-2})$ errors contribute
$o(1/\log z)$.

For fixed $K$, Mertens' product and the classical zero-free region for
$\zeta_K$ give the stronger remainder
\[
 \prod_{\substack{\Norm\p<z\\\p\notin S}}
             (1-(\Norm\p)^{-1})^{-1}
 =e^\gamma\kappa_K\log z
       \prod_{\p\in S}(1-(\Norm\p)^{-1})
       \left(1+O_K(e^{-c_K\sqrt{\log z}})\right)
\]
for some $c_K>0$ (a possible exceptional real zero is harmless because
$K$ is fixed). For instance, this follows by partial summation from the
fixed-field prime ideal theorem
$\psi_K(x)=x+O_K(xe^{-c_K\sqrt{\log x}})$, after adjusting $c_K$.
In particular the relative error is $o(1/\log z)$, which is the precision
needed to retain the additive constant below. See \cite{GarciaLee} for
number field Mertens formulas.
Combining the last two displays, and
\eqref{eq:one-S0-expansion}--\eqref{eq:one-S1-expansion} yields
\begin{equation}\label{eq:one-ratio}
 \frac{S_1}{S_0}\geq\mathcal M_{K,\Om}
    \bigl(\log_2X+\log_3X-c_3-\log c+o(1)\bigr).
\end{equation}
Here $\log z=\log_2X+\log_3X-\log c$. To preserve the fixed power
saving required in \eqref{eq:c-condition}, first fix any $c>c_0$ and
let $X\to\infty$. The resulting lower bound holds for every such fixed
$c$. We may then let $c\downarrow c_0$ (equivalently, use a diagonal
sequence if one wants a single choice depending on $X$). Since
$C_2=c_3+\log c_0$, this proves the maximum assertion.

For the frequency assertion, first fix $0<\eta'<\eta$ and put
$c=c_0e^{\eta'}$.
The lower bound in \eqref{eq:one-ratio} exceeds the required threshold
by $\mathcal M_{K,\Om}(\eta-\eta'+o(1))$.
Let $\mathcal H_\eta\subseteq\Om(X)$ be the set where that threshold is exceeded.
Subtracting the complementary contribution to $S_1$, and using
$L(1,\chi)\ll\log_2X$, gives
\[
 \sum_{\aideal\in\mathcal H_\eta}
 W(\Norm\aideal/X)R_{\aideal}^2\geq S_0X^{-o(1)}.
\]
The pointwise bound
\[
 R_{\aideal}^2\leq
 \prod_{\substack{\Norm\p<z\\\p\notin S}}(z/\Norm\p)^2
 =X^{2/c+o(1)}
\]
and \eqref{eq:one-S0-size} consequently imply
\[
 \frac{\#\mathcal H_\eta}{\#\Om(X)}
 \geq X^{-(2-\lambda)/c+o(1)}
 =X^{-e^{-\eta'}/2+o(1)}.
\]
For each fixed $0<\eta'<\eta$ this bound holds with a fixed power
saving in \eqref{eq:c-condition}. Letting $\eta'\uparrow\eta$ after
$X\to\infty$, and diagonalizing over a sequence $\eta'_j\uparrow\eta$ if
a single $X$-dependent choice is desired, gives
$X^{-e^{-\eta}/2+o(1)}$. The gap in \eqref{eq:c-condition} remains
bounded away from zero because the limiting value $\eta$ is fixed and
positive.
\end{proof}
\section*{Acknowledgements}
The authors thank OpenAI's ChatGPT for assistance with the presentation of the manuscript. Z. Dong is supported by the National
Natural Science Foundation of China (Grant No. 1240011770).

\end{document}